\documentclass[12pt,leqno,a4paper]{amsart}
\usepackage{enumerate,cite}
\usepackage{amssymb}
\usepackage{amsthm}      
\usepackage{amsmath}
\usepackage{txfonts}
\usepackage{mathrsfs}
\usepackage{hyperref}
\usepackage{color}
\usepackage[all]{xy}

\overfullrule 5pt

\textwidth160mm
\oddsidemargin5mm
\evensidemargin5mm

\newcommand{\F}{\mathbb{F}}

\newcommand{\G}{\mathrm{G}}
\newcommand{\R}{\mathrm{R}}

\newcommand{\bG}{\mathbf{G}}
\newcommand{\bX}{\mathbf{X}}
\newcommand{\bP}{\mathbf{P}}
\newcommand{\bH}{\mathbf{H}}

\newcommand{\bL}{\mathbf{L}}

\newcommand{\bY}{\mathbf{Y}}
\newcommand{\bU}{\mathbf{U}}
\newcommand{\bBr}{\mathbf{Br}}
\newcommand{\bNBr}{\mathbf{NBr}}

\newcommand{\cB}{\mathcal{B}}
\newcommand{\cA}{\mathcal{A}}

\newcommand{\cF}{\mathcal{F}}

\newcommand{\cO}{\mathcal{O}}
\newcommand{\cP}{\mathcal{P}}
\newcommand{\cC}{\mathcal{C}}

\newcommand{\fS}{\mathfrak{S}}

\newcommand{\mrO}{\mathrm{O}}
\newcommand{\Aut}{\operatorname{Aut}\nolimits}
\newcommand{\End}{\operatorname{End}\nolimits}
\newcommand{\Alp}{\operatorname{Alp}\nolimits}
\newcommand{\IBr}{\operatorname{IBr}\nolimits}
\newcommand{\Ind}{\operatorname{Ind}}
\newcommand{\Lin}{\operatorname{Lin}}
\newcommand{\Irr}{\operatorname{Irr}\nolimits}
\newcommand{\Out}{\operatorname{Out}\nolimits}
\newcommand{\Res}{\operatorname{Res}}
\newcommand{\GL}{\operatorname{GL}}

\newcommand{\SL}{\operatorname{SL}}
\newcommand{\SU}{\operatorname{SU}}
\newcommand{\Sp}{\operatorname{Sp}}
\newcommand{\Spin}{\operatorname{Spin}}

\newcommand{\PSp}{\operatorname{PSp}}
\newcommand{\dz}{\operatorname{dz}}
\newcommand{\rdz}{\operatorname{rdz}}

\newcommand{\Bl}{\operatorname{Bl}}
\newcommand{\Br}{\operatorname{Br}}
\newcommand{\bl}{\operatorname{bl}}
\newcommand{\Char}{\operatorname{Char}}
\newcommand{\BrCh}{\operatorname{BrCh}}
\newcommand{\opp}{\operatorname{opp}}
\newcommand{\N}{\operatorname{N}}
\newcommand{\C}{\operatorname{C}}
\newcommand{\Tr}{\operatorname{Tr}}

\newcommand{\tpsi}{\widetilde{\psi}}
\newcommand{\hpsi}{\widehat{\psi}}
\newcommand{\tQ}{\widetilde{Q}}

\newcommand{\tvhi}{\widetilde{\vhi}}
\newcommand{\hvhi}{\widehat{\vhi}}
\newcommand{\ts}{\tilde{s}}
\newcommand{\tB}{\widetilde{B}}
\newcommand{\tG}{\widetilde{G}}

\newcommand{\tM}{\widetilde{M}}
\newcommand{\hG}{\widehat{G}}
\newcommand{\hN}{\widehat{N}}
\newcommand{\hM}{\widehat{M}}

\newcommand{\hQ}{\widehat{Q}}
\newcommand{\tL}{\widetilde{L}}

\newcommand{\tcB}{\widetilde{\cB}}
\newcommand{\tbG}{\widetilde{\mathbf{G}}}
\newcommand{\tbL}{\widetilde{\mathbf{L}}}

\newcommand{\wOm}{\widetilde{\Omega}}

\let\la=\lambda
\let\vhi=\varphi
\let\Ga=\Gamma

\let\ti=\times

\theoremstyle{theorem}
\newtheorem{mainthm}{Theorem}
\newtheorem{thm}{Theorem}[section]
\newtheorem{lem}[thm]{Lemma}
\newtheorem{prop}[thm]{Proposition}
\newtheorem{cor}[thm]{Corollary}
\newtheorem{amp}[thm]{Assumption}
\newtheorem{hpo}[thm]{Hypothesis}

\theoremstyle{definition}
\newtheorem{defn}[thm]{Definition}
\newtheorem{rmk}[thm]{Remark}

\numberwithin{equation}{section}

\raggedbottom

\begin{document}

\title[Jordan decomposition for Alperin weight conjecture]{Jordan decomposition for weights and the blockwise\\ Alperin weight conjecture}

\author{Zhicheng Feng}
\address{School of Mathematics and Physics, University of Science and Technology Beijing, Beijing 100083, China}
\email{zfeng@pku.edu.cn}

\author{Zhenye Li}
\address{College of Mathematics and Physics, Beijing University of Chemical Technology, Beijing 100029, China}
\email{lizhenye@pku.edu.cn}

\author{Jiping Zhang}
\address{SICM, Southern University of Science and Technology, Shenzhen 518055, China;  School of Mathematical Sciences, Peking University, Beijing 100871, China.}
\email{jzhang@pku.edu.cn}

\thanks{Supported by the NSFC (National Natural Science Foundation of China) No. 11901028~(Feng), No. 12001032 (Li)~and No. 11631001~(Zhang).}

\begin{abstract}
The Alperin weight conjecture was reduced to simple groups  by the work of Navarro--Tiep and Sp\"ath.
To prove Alperin weight conjecture, it suffices to show that all finite non-abelian simple groups are BAW-good.
We reduce the verification of the inductive conditons for groups of Lie type in non-defining characteristic to quasi-isolated blocks.
\end{abstract}

\keywords{Alperin weight conjecture, inductive condition, Jordan decomposition of weights, quasi-isolated blocks}

\subjclass[2020]{20C20, 20C33}


\maketitle


\section{Introduction}

The local-global conjectures in the representation theory  of finite groups predict global data in terms of local information.
Alperin's weight conjecture is one, which asserts that the number of irreducible Brauer
characters of a block of a finite group may be determined by counting the number
of conjugacy classes of the so-called weights for this block.

In the domain of representation theory, some significant breakthroughs have been achieved using the classification of finite simple groups. 
For example, Isaacs, Malle and Navarro \cite{IMN07} reduced the McKay conjecture to simple groups in 2007 and using this reduction,
Malle and Sp\"ath \cite{MS16} finally proved the McKay conjecture for all finite groups at the prime 2.

In 2011, the non-blockwise version of Alperin's weight conjecture was  reduced to a verification in finite simple groups by Navarro and Tiep \cite{NT11}.
Soon after, Sp\"ath \cite{Sp13} extended this reduction to the blockwise setting:
if every finite non-abelian simple group satisfies the so-called \emph{inductive blockwise Alperin weight (BAW)} condition, then the blockwise Alperin weight conjecture holds for every block of every finite group.
If a finite  non-abelian simple group $S$ satisfies the inductive BAW condition, then we also say that $S$ is \emph{BAW-good}.
The inductive BAW condition has been verified for simple alternating groups, Suzuki and Ree groups by Malle \cite{Ma14},
for groups of Lie type in their defining characteristic by Sp\"ath \cite{Sp13},
for groups of types $\mathsf G_2$ and ${}^3\mathsf D_4$ by Schulte \cite{Sch16} and  for certain cases of classical groups by several authors \cite{FLZ20a,FLZ20b,FLZ20c,FM20,Li21}.

The 16 infinite families of simple groups of Lie type form the bulk of the finite simple groups, so understanding the representations of finite groups of Lie type is important to proving the inductive conditions for simple groups.
Is there an analogue of Jordan decomposition for weights of finite groups of Lie type? Malle proposed this problem in Problem 4.9 of\cite{Ma17}.
Following Kessar–Malle \cite[p.~101]{KM19}, we hope for a Bonnaf\'e--Rouquier type reduction (cf. \cite{BR03}) to a few special situations (\emph{i.e.}, quasi-isolated blocks).

Recently, Ruhstorfer \cite{Ru20b} reduced the verification of the inductive condition of the Alperin--McKay conjecture for groups of Lie type in non-defining characteristic to quasi-isolated blocks.
Inspired by this success, in this paper, we give positive answers for the questions of Malle and Kessar--Malle as in the above paragraph and 
establish the following theorem which gives a reduction of the verification for the inductive BAW condition to quasi-isolated blocks.

\begin{mainthm}\label{mian-thm}
Assume that the quasi-isolated $\ell$-blocks of every finite quasi-simple group of Lie type over a field of characteristic different from $\ell$ satisfy the inductive blockwise Alperin weight condition and Assumption \ref{assumption:stabilizer-extend} holds  (in the sense of Hypothesis \ref{Hypo-quasi-isolated}).
Let $S$ be a simple group of Lie type with non-exceptional Schur multiplier defined over  a field of characteristic $p$ different from $\ell$.
Then $S$ is BAW-good at the prime $\ell$. 
\end{mainthm}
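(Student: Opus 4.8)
The plan is to follow the Bonnaf\'e--Rouquier philosophy: reduce the verification of the inductive BAW condition for a simple group $S$ of Lie type to the corresponding statement for quasi-isolated blocks of the various quasi-simple groups of Lie type that arise from the ambient reductive groups, using a Jordan decomposition for weights that is equivariant under the relevant automorphisms. Concretely, write $S = G/Z(G)$ where $G = \bG^F$ for a simple simply connected $\bG$ with Frobenius $F$, and let $\tbG$ be a regular embedding with $\tG = \tbG^F$. Fix an $\ell$-block $B$ of $G$ covered by an $\ell$-block $\tB$ of $\tG$. By the theory of $e$-cuspidal pairs and Bonnaf\'e--Dat--Rouquier, the block $\tB$ lies in a Lusztig series $\cE_\ell(\tG,s)$ for a semisimple $\ell'$-element $s\in\tbG^{*F}$; setting $\bL^* = \C_{\tbG^*}(Z^\circ(\C_{\tbG^*}(s)))$ one obtains a Levi subgroup and a Morita equivalence (coming from Jordan decomposition / the Bonnaf\'e--Dat--Rouquier equivalence) between $\tB$ and a quasi-isolated block $\tb$ of $\tL^* = \bL^{*F}$ (dually, a block of a Levi $\tL$ of $\tG$), under which the defect groups and the local structure are preserved. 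The first step is therefore to make this equivalence simultaneously compatible with (a) passing between $\tG$ and $G$ (Clifford theory along $\tG/G$, which is cyclic), and (b) the action of the full automorphism group $\Aut(S)$, realized as $\tG\rtimes D$ with $D$ generated by diagonal, field and graph automorphisms.

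The second step is to establish the Jordan decomposition for weights itself in this relative setting: one shows that the set of $G$-conjugacy classes of $B$-weights $(R,\varphi)$ is in natural bijection with the set of $L$-conjugacy classes of $b$-weights, where $L = \bL^F$ for the Levi $\bL$ dual to $\bL^*$ and $b$ is the Bonnaf\'e--Rouquier correspondent of $B$. This uses that weights are detected locally: a weight subgroup $R$ is an $\ell$-radical subgroup, its normalizer $\N_G(R)$ again has a description via Levi subgroups, and the Brauer characters of $\N_G(R)$ in $B$ are controlled by the same Jordan decomposition applied to the smaller groups $\C_\bG(R)$ and $\N_\bG(R)$. The key point is that the Morita equivalence of Bonnaf\'e--Dat--Rouquier is compatible with Brauer correspondence and restricts well to local subgroups; this is where one invokes the hypothesis that the quasi-isolated blocks satisfy the inductive BAW condition together with Assumption~\ref{assumption:stabilizer-extend} (via Hypothesis~\ref{Hypo-quasi-isolated}), which precisely packages the equivariance and extendibility data needed to transport the bijection through Clifford theory. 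One must check that this weight bijection is equivariant with respect to $\N_{\tG\rtimes D}(B)$, that it preserves the relevant central characters and cohomological obstruction classes (the $2$-cocycle governing projective representations over the stabilizers), and that it is compatible with the block-theoretic labelling.

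The third step assembles these pieces into the verification of the inductive BAW condition for $S$ as formulated by Sp\"ath: one exhibits an $\Aut(S)_B$-stable $G$-transversal of $\Irr(B)$ (irreducible Brauer characters), an $\Aut(S)_B$-equivariant bijection to weights, and checks the block-wise and cohomological compatibility conditions. The transport works block by block: for blocks $B$ that are already quasi-isolated (or of central defect, or lying over quasi-isolated blocks of quasi-simple groups after reduction), the statement holds by hypothesis; for the remaining blocks, the Jordan decomposition for weights reduces the data to a proper Levi $L$, and one argues by induction on the order of the group (or the rank), since the quasi-simple constituents of $L$ are again groups of Lie type of smaller rank over the same field, and their quasi-isolated blocks satisfy the inductive condition by assumption. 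The compatibility with field and graph automorphisms is handled by choosing $s$, $\bL$, and all auxiliary data to be stable under the graph-field automorphism used to define $S$, exploiting that these automorphisms normalize a maximally split torus and commute with the Bonnaf\'e--Dat--Rouquier construction up to the expected conjugacy.

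The main obstacle I expect is the equivariance and cohomological bookkeeping in the second step: it is not enough to have a bijection of weights over $\C_\bG(s)^F$-type groups, one needs it to be strictly compatible with the action of the stabilizer of $B$ in $\tG\rtimes D$ \emph{and} with the projective-representation obstruction classes that enter the inductive BAW condition. Diagonal automorphisms (coming from $\tG/G Z(\tG)$) interact subtly with the Bonnaf\'e--Dat--Rouquier Morita equivalence, since the Levi $\bL^*$ need not be $F$-stable as an $\Aut$-set in the naive way, and the class of the relevant Jordan-decomposition functor can differ from the identity by a linear character; controlling this discrepancy — and showing it matches on the weight side — is the technical heart. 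A secondary difficulty is that $\ell$-radical subgroups of $G$ need not be well-behaved under the embedding $G\hookrightarrow\tG$ or under the Morita equivalence, so one must verify directly that the equivalence sends $b$-weights of local subgroups of $L$ to $B$-weights of local subgroups of $G$, rather than merely matching global Brauer character counts.
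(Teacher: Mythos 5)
Your proposal has the right overall architecture (Bonnaf\'e--Dat--Rouquier equivalence to a Levi, a Jordan decomposition for weights obtained from local compatibility of that equivalence, assembly via an inductive criterion), but it leaves out the two steps that carry most of the weight in the actual argument, and one of them is a genuine missing idea rather than routine bookkeeping.

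First, a weight character lives in $\dz(\N_G(Q)/Q)$, not in a block of $\N_G(Q)$. Ruhstorfer's local version of the Bonnaf\'e--Dat--Rouquier equivalence gives a Morita equivalence between $\Lambda\N_{\bL^F}(Q)C_Q$ and $\Lambda\N_{\bG^F}(Q)B_Q$, but to transport defect-zero characters of the quotients one must descend this equivalence to $\Lambda(\N_{\bL^F}(Q)/Q)\bar C_Q$ and $\Lambda(\N_{\bG^F}(Q)/Q)\bar B_Q$. This descent is not formal: it requires controlling the vertices of the summands of the Rickard complex (so that Rouquier's quotient lemma applies) and then showing that the cohomology of the quotient complex is still concentrated in one degree, which over $\cO$ needs a separate torsion-freeness argument. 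Your proposal never passes to the quotient by $Q$, so as written it produces a bijection of Brauer characters of normalizers, not of weight characters. Relatedly, your worry that ``$\ell$-radical subgroups need not be well-behaved under the Morita equivalence'' dissolves once one observes that the correspondence of Brauer pairs $(Q,c_Q)\mapsto(Q,b_Q)$ keeps the subgroup $Q$ literally fixed (as a subgroup of $L\le G$); the real issue is the quotient, not the matching of radical subgroups.

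Second, the hypothesis is about quasi-isolated blocks of \emph{quasi-simple} groups, whereas the Levi $L=\bL^F$ is not quasi-simple: one must descend to $L_0=[\bL,\bL]^F$, decompose it into factors $\bH_{x_i}^{F_i'}$ (including solvable factors handled by Tits's theorem), control the wreath-product structure of the stabilizers, and then climb back up from $L_0$ to $L$ and $\tL$ using Clifford theory simultaneously for Brauer characters and for weights --- the latter via the Dade--Glauberman--Nagao correspondence and the notion of one weight covering another. This is where the inductive hypothesis is actually consumed, and where the compatibility of $\geqslant_b$ with direct and wreath products is needed; your ``induction on rank'' gestures at it but does not supply the mechanism. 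A smaller but real point: the reduction must be to \emph{strictly} quasi-isolated elements ($\C_{{\bG^*}^F}(s)\C^\circ_{\bG^*}(s)$ not in a proper Levi), not merely quasi-isolated ones, since the minimal admissible Levi must contain the full group of $F$-fixed points of the centralizer; your choice $\bL^*=\C_{\tbG^*}(Z^\circ(\C_{\tbG^*}(s)))$ elides this. Finally, the cocycle bookkeeping you flag as the main obstacle is in practice absorbed by the block isomorphisms of modular character triples and the Brough--Sp\"ath criterion, so it is less of a difficulty than the two gaps above.
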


Assumption \ref{assumption:stabilizer-extend} predicts the stabilizer and the extendibility of  irreducible Brauer characters of quasi-simple groups, and it is one of the main ingredients of the criterion of the inductive BAW condition given by Brough--Sp\"ath \cite{BS20}.
Note that a similar result for ordinary characters, called the $A(\infty)$ condition, was established by Cabanes and Sp\"ath during the investigation of the inductive condition of the McKay conjecture; see Remark \ref{rmk:untri-basic-set}.
In this way, Assumption \ref{assumption:stabilizer-extend} holds if the unitriangularity of the decomposition matrix is known.
For example, we mention that the unipotent characters form unitriangular basic sets at good primes by  recent work of Brunat, Dudas and Taylor \cite{BDT20}.

Let $\bG$ be a connected reductive algebraic group over an algebraic closure of the field $\F_p$ with $p$ elements and $F:\bG\to\bG$ a Steinberg map.
Suppose that $(\bG^*,F^*)$  dual to $(\bG,F)$.
Let $\ell$ be a prime different from $p$ and let $(K,\cO,k)$ be a splitting $\ell$-modular system for $\bG^F$.
For a semisimple $\ell'$-element
$s\in{\bG^*}^{F^*}$,
by a theorem of Brou\'e--Michel \cite{BM89}, we associate to the ${\bG^*}^{F^*}$-conjugacy class containing $s$ a central idempotent  $e_s^{\bG^F}$ of $\cO \bG^F$.
Assume that $\C_{{\bG^*}^{F^*}}(s)\C_{\bG^*}^\circ(s)$ is contained in an $F^*$-stable Levi subgroup $\bL^*$ of $\bG^*$ and $\bL\le \bG$ is dual to $\bL^*$.

In 2003, Bonnaf\'e and Rouquier \cite{BR03} showed that Lusztig induction induces a Morita equivalence between $\cO \bL^F e_s^{\bL^F}$ and $\cO \bG^F e_s^{\bG^F}$.
This is a crucial ingredient in the proof of the ``if" direction of the Brauer Height Zero Conjecture by Kessar--Malle \cite{KM13}.
The Bonnaf\'e--Rouquier equivalence was improved by Bonnaf\'e--Dat--Rouquier \cite{BDR17}: $\cO \bL^F e_s^{\bL^F}$ and $\cO \bG^F e_s^{\bG^F}$ are splendid Rickard equivalent, and therefore the Brauer categories of these two block algebras are equivalent.
In \cite{Ru20a}, Ruhstorfer  obtained a local version and extended the Bonnaf\'e--Rouquier equivalence by including automorphism groups.

For an $\ell$-subgroup $Q$ of $\bG^F$, we extend the result of Ruhstorfer \cite{Ru20a} on local subgroups to quotient groups and prove a Morita equivalence between block algebras of $\N_{\bL^F}(Q)/Q$ and $\N_{\bG^F}(Q)/Q$.
From this we obtain a natural correspondence of conjugacy classes of weights in $e_s^{\bL^F}$ and $e_s^{\bG^F}$, which is equivariant under the automorphisms.
In this way, we prove a Jordan decompsition of the inductive BAW condition and reduce to quasi-isolated blocks.

The quasi-isolated semisimple elements were classified by Bonnaf\'e \cite{Bo05} and the quasi-isolated blocks are better understood by the work Cabanes, Enguehard, Kessar and Malle; see \cite{KM13} and \cite[\S 9]{Cr19} for a precise historical account.
In particular, the unipotent blocks are an important class of quasi-isolated blocks. 
We mention that the unipotent blocks had been checked to satisfy the inductive BAW condition for many cases, especially for most of the classical groups \cite{FLZ19,FLZ20b}; see Remark \ref{rmk:checked-case-uni}.

We hope that Theorem  \ref{mian-thm} can provide significant simplifications in verifying the inductive BAW condition.
For example, it is useful in the final proof of the verification  for  type $\mathsf A$; see \cite{FLZ21}.

\vspace{2ex}

This paper is structured as follows.
After introducing some notation in Section \ref{sec:pre}, 
we recall the notion of block isomorphism of modular triples and covering of weights, and formulate the criterion of the inductive BAW condition given by Brough--Sp\"ath in
Section \ref{sec:char-triple}.
A Morita equivalence of block algebras of quotient groups is obtained in Section \ref{sec:lie-type-equivalence} and from this we
derive a Jordan decomposition of weights of finite groups of Lie type in non-defining characteristic.
Afterwards, we establish a Jordan decomposition for the inductive BAW condition and prove Theorem \ref{mian-thm} in Section \ref{sec:red-quasi-isolated}.

\vskip 1pc
\noindent{\bf Acknowledgement:} 
We thank Gunter Malle and Lucas Ruhstorfer for their helpful remarks on an earlier version and thank the anonymous referee for useful comments and suggestions.
We also thank Gabriel Navarro for his comment.


\section{Preliminaries}\label{sec:pre}

In this section, we introduce some notation and background material from the representation theory of finite groups. 
Throughout, we fix a prime number $\ell$ and consider modular representations with respect to  $\ell$.

\subsection{General notation}
Let $G$ be a finite group. 
Concerning the $\ell$-blocks and ($\ell$-Brauer) characters of $G$, we mainly follow the notation of \cite{Na98}.
We denote the restriction and induction by $\Res$ and $\Ind$ respectively.
For $N\unlhd G$ we sometimes identify the (Brauer) characters of $G/N$ with their inflations to $G$.
If $\theta\in \Irr(N)$ (resp. $\theta\in\IBr(N)$), then we denote by $\Char(G\mid \theta)$ (resp. $\BrCh(G\mid \theta)$) the set of possibly reducible (Brauer) characters $\chi$ of $G$ such that $\Res^G_N\chi$ is a multiple of $\theta$.
If $\chi\in\Irr(G)\cup\IBr(G)$, we denote by $\bl(\chi)$ the block of $G$ containing $\chi$.
Sometimes we also write $\bl(\chi)$ as $\bl_{G}(\chi)$ to indicate the group $G$.
For $\chi\in\Irr(G)$, we denote by $\chi^0$ its restriction to the $\ell$-regular elements of $G$.

As usual, the cardinality of a set, or the order of a finite group, $X$, is denoted by $|X|$.
If a group $A$ acts on a finite set $X$, we denote by $A_x$ the stabilizer of $x\in X$ in $A$, analogously we denote by $A_Y$ the setwise stabilizer of $Y\subseteq X$.
If $A$ acts on a finite group $G$ by automorphisms, then  $A$ acts on $\Irr(G)\cup\IBr(G)$ naturally, where ${}^{a^{-1}}\chi(g)=\chi^a(g)=\chi(g^{a^{-1}})$ for every $g\in G$, $a\in A$ and $\chi\in\Irr(G)\cup\IBr(G)$.
If $H\le G$ and $\chi\in\Irr(G)$, we denote by $A_{H,\chi}$ the stabilizer of $\chi$ in $A_H$.

If moreover $G$ is abelian,
then all irreducible characters of $G$ are linear characters.
We also write $\Lin(G)$ for $\Irr(G)$.
The group $\Lin(G)$ is isomorphic to $G$.
In addition, $\Lin_{\ell'}(G)$ denotes the subset of $\Lin(G)$ consisting of the elements of $\ell'$-order so that $\Lin_{\ell'}(G)$ and $\mrO_{\ell'}(G)$ are isomorphic.

For a finite group $G$, we let $\dz(G)$ denote the set of all irreducible $\ell$-defect zero characters of $G$.
Set $\dz(G \mid \theta) = \dz(G) \cap \Irr(G \mid \theta)$ for $K \le G$ and $\theta\in\Irr(K)$.
In addition, if $N\unlhd G$ and $\theta\in\Irr(N)$, then we set $$\rdz(G\mid\theta):=\{\chi\in\Irr(G\mid\theta)\mid (\chi(1)/\theta(1))_\ell=|G/N|_\ell\}.$$

A \emph{weight} of a finite group $G$ is a pair $(Q,\vhi)$, where $Q$ is an $\ell$-subgroup of $G$ and $\vhi\in\dz(\N_G(Q)/Q)$.
Note that $Q$ is necessarily a \emph{radical subgroup} of $G$, \emph{i.e.} $Q=\mrO_\ell(\N_G(Q))$.
We also call $\vhi$ a \emph{weight character}.
As usual, we often identify characters in $\dz(\N_G(Q)/Q)$ with their inflations to $\N_G(Q)$.
Let $\Alp^0(G)$ be the set of all the weight of $G$.
The group $G$ acts by conjugation on $\Alp^0(G)$ by 
$(Q,\vhi)^g=(Q^g,\vhi^g)$ for $(Q,\vhi)\in\Alp^0(G)$ and $g\in G$.
We denote by $\Alp(G)$
the set of all $G$-conjugacy classes of  weights of $G$.
For a weight $(Q,\varphi)$ of $G$, denote by $\overline{(Q,\varphi)}$ the $G$-conjugacy class containing $(Q,\varphi)$.
Sometimes we also write $\overline{(Q,\varphi)}$ simply as $(Q,\varphi)$ when no confusion can arise.

Each weight  may be assigned to a unique block.
Let $B$ be an $\ell$-block of $G$, and $(Q,\varphi)$ a weight of $G$.
We say $(Q,\varphi)$  is a $B$-weight of $G$ if $\bl_{\N_{G}(Q)}(\vhi)^G=B$.
We denote the set of $B$-weights of $G$ by $\Alp^0(B)$ and the set of conjugacy classes of $B$-weights of $G$ by $\Alp(B)$.
Then the  blockwise Alperin weight conjecture \cite{Al87}  asserts that 
$$|\IBr(B)|=|\Alp(B)|\ \ \textrm{for every block $B$ of $G$}.$$

Let $G\unlhd \tG$ and $B$ a union of $\tG$-orbits of blocks of $G$.
We denote by 
$\IBr(\tG\mid B)$ the irreducible Brauer characters of $\tG$ in the blocks of $\tG$ covering the blocks in $B$,
and by $\Alp(\tG\mid B)$ the set of conjugacy classes of weights of $\tG$ belonging to the blocks of $\tG$ covering the blocks in $B$.

\subsection{Modular representations}\label{subsec:mod-rep}

We recall some notation of local representation theory from \cite{Lin18}.
Let $(K,\cO,k)$ be an $\ell$-modular system, where $\cO$ is a complete discrete valuation ring with a residue field $k=\cO/J(\cO)$ of characteristic $\ell$ and a quotient field $K$ of characteristic zero.
In this paper, we always assume that $k$ is algebraically closed and this $\ell$-modular system is splitting, that is, if we consider a finite group $G$, then $K$ contains all roots of unity whose order divides the exponent of the group $G$.
We will use $\Lambda$ to interchangeably denote $\cO$ or $k$.

Let $A$ be a $\Lambda$-algebra which is finitely generated and projective as a $\Lambda$-module.
Denote by $A^{\opp}$ its opposite  algebra.
We write $A$-mod for the category of left A-modules that are finitely generated  as $\Lambda$-modules.

Let $G$ be a finite group.
$\Lambda G$ denotes the group algebra.
Let $\sigma\in\Aut(G)$, $H\le G$ and $M$ a left (resp. right)  $\Lambda H$-module.
Then we denote by $^\sigma M$ (resp. $M^\sigma$) the left (resp. right) $\Lambda\sigma(H)$-module, which coincides with $M$ as $\Lambda$-modules and the action of $\sigma(H)$ given by $\sigma(h)m:=hm$
(resp. by $m\sigma(h):=mh$).

Let $H$ be a subgroup of $G$.
For a $\Lambda G$-module $M$, we denote by $M^H$ the subset of $H$-fixed points in $M$.
Suppose that $H\le L$ are subgroups of $G$.
We recall that the relative trace map $\Tr^{L}_{H}:M^H\to M^L$ is defined by
$$\Tr^{L}_{H}(m)=\sum_{x\in[L/H]}x\cdot m$$ for $m\in M^H$,
where $[L/H]$ denotes a complete set of representatives for the $H$-cosets in $L$.
Let $Q$ be an $\ell$-subgroup of $G$.
We consider the Brauer functor (see, \emph{e.g.}, \cite[\S5.4]{Lin18}) $$\Br_Q^G:\Lambda G\textrm{-mod} \to k \N_G(Q)/Q\textrm{-mod},$$ which for a $\Lambda G$-module $M$ is defined by $$\Br_Q^G(M)= k\otimes_\Lambda (M^Q/\sum_{P<Q}\Tr_P^Q(M^P)).$$
We also abbreviate $\Br_Q^G$ simply as $\Br_Q$ when no confusion can arise.

Let $M$ be a $\Lambda G$-module.
Then $M$ is called an \emph{$\ell$-permutation module} if for any $\ell$-subgroup $Q$ of $G$ the restriction $\Res^{G}_{Q}(M)$ is a permutation $\Lambda Q$-module, \emph{i.e.}, $M$ is $\Lambda$-free having a $\Lambda$-basis $X$ which is permuted by the action of elements of $Q$ on $M$.
The importance of $\ell$-permutation modules always relies on their good behavior under the action of the Brauer functor; see for instance \cite[\S5.11]{Lin18}.
Denote by $\Lambda G$-perm the full subcategory of $\Lambda G$-mod whose objects are the $\ell$-permutation $\Lambda G$-modules.

We denote by $\mathrm{br}_Q: (\Lambda G)^Q\to k\C_G(Q)$ the algebra morphism given by $$\mathrm{br}_Q(\sum_{g\in G}\la_g g)=\sum_{g\in\C_G(Q)}\la_g g,$$
where $\la_g\in\Lambda$ for $g\in G$.
If $M$ is an $\ell$-permutation $\Lambda G$-module and $e\in Z(\Lambda G)$ is an idempotent, then we have $\Br_Q(Me)=\Br_Q(M)\mathrm{br}_Q(e)$.

\subsection{Brauer categories}

A \emph{Brauer pair} of $G$ (with respect to $\ell$) is a pair $(Q,b_Q)$ such that $Q$ is an $\ell$-subgroup of $G$ and $b_Q$ is an $\ell$-block of $\C_G(Q)$.
For two Brauer pairs $(Q,b_Q)$ and  $(P,b_P)$, we write
$(Q,b_Q)\le (P,b_P)$ 
and say \emph{$(Q,b_Q)$ is contained in $(P,b_P)$} 
if and only if $Q\le P$ and there is a primitive idempotent $i\in(\cO G)^P$ satisfying that $\mathrm{br}_P(i)b_P\ne 0$ and $\mathrm{br}_Q(i)b_Q\ne0$. 
Then for every Brauer pair $(P,b_P)$ and $Q\le P$, there exists a unique block $b_Q$ of $\C_G(Q)$ such that $(Q,b_Q)\le (P,b_P)$.

Let $B$ be a block of $G$.
A Brauer pair  $(Q,b_Q)$ is called a \emph{$B$-Brauer pair} if $(1,B)\le(Q,b_Q)$.
Then an $\ell$-subgroup $D$ of $G$ is a defect group of $B$ if and only if there exists a $B$-Brauer pair $(D,b_D)$ which is maximal with respect to the order relation ``$\le$".
In addition, all maximal $B$-Brauer pairs are $G$-conjugate.
For more basic facts about Brauer pairs, we refer to \cite[\S6.3]{Lin18}.

We recall the definition of Brauer category of a block as in \cite[p.~427]{Th95}.
Let $B$ be a block of $G$.
Then the \emph{Brauer category} $\bBr(G,B)$ of $B$ is the category whose objects are the $B$-Brauer pairs and the set of morphisms from  $(Q,b_Q)$ to $(P,b_P)$ consisting of all homomorphism $Q\to P$ which are given by conjugation via some $g\in G$ such that $(Q,b_Q)^g\le (P,b_P)$.

Let $(D,b_D)$ be a maximal $B$-Brauer pair.
Denote by $\cF_{(D,b_D)}(G,B)$ the full subcategory of $\bBr(G,B)$ with objects consisting of $B$-Brauer pairs contained in $(D,b_D)$.
Note that $\cF_{(D,b_D)}(G,B)$ is a fusion system on the $\ell$-group $D$; see also \cite[\S8.5]{Lin18}.
By \cite[Lemma~47.1]{Th95}, the natural inclusion functor $\cF_{(D,b_D)}(G,B)\hookrightarrow \bBr(G,B)$ induces an equivalence of categories.

Let $(Q,b_Q)$ be a Brauer pair of $G$.
We denote by $\N_{G}(Q,b_Q)$ the stabilizer of $b_Q$ in $\N_G(Q)$.
Then the idempotent $b_Q$ is also a block of $\N_{G}(Q,b_Q)$ by \cite[Exercise~40.2(b)]{Th95}.
From this, $B_Q=\Tr^{\N_G(Q)}_{\N_{G}(Q,b_Q)}(b_Q)$ the unique block of $\N_G(Q)$ covering $b_Q$.

\begin{rmk}\label{remark:wei-fusion-system}
Let $B$ be a block of a finite group $G$ and $(D,b_D)$ be a maximal $B$-Brauer pair.  
If $(Q,\vhi)$ with $Q\le D$ is a $B$-weight, then $(Q,\bl(\theta))^g\le (D,b_D)$ for some $\theta\in\Irr(\C_G(Q)\mid\vhi)$ and $g\in G$.

For any $Q\le D$, we let $(Q,b_Q)\le (D,b_D)$ be the unique $B$-Brauer pair.  
Now we enumerate the $B$-weights of $G$ from $\cF_{(D,b_D)}(G,B)$.
We also regard $\cF_{(D,b_D)}(G,B)$ as the set of its objects.
Define an equivalence relation on $\cF_{(D,b_D)}(G,B)$:
$(Q,b_Q)\sim (R,b_R)$ if and only if they are isomorphic in $\cF_{(D,b_D)}(G,B)$.
Let $(Q,b_Q)$ run over a complete set of representatives of the equivalence classes of $\cF_{(D,b_D)}(G,B)/\!\!\sim$, 
and let $\vhi$ run over the characters in $\Irr(B_Q)\cap \dz(\N_{G}(Q)/Q)$ for $B_Q=\Tr^{\N_G(Q)}_{\N_{G}(Q,b_Q)}(b_Q)$.
Then the weights $(Q,\vhi)$ obtained above form a complete set of representatives of $\Alp(B)$.
\end{rmk}

We denote by $\bNBr(G, B)$ the set of pairs $(Q,B_Q)$, where $B_Q$ is a block of $N_G(Q)$ such that $(B_Q)^G=B$.

\subsection{Equivalences of blocks of group algebras}

 Let $A$ be a $\Lambda$-algebra, finitely generated and projective as a $\Lambda$-module.
 Following \cite[2.A]{BDR17}, we denote by $\mathrm{Comp}^b(A)$ the category of bounded complexes of $A$-mod,  by
 $\mathrm{Ho}^b(A)$ the homotopy category of $\mathrm{Comp}^b(A)$,
 and by $D^b(A)$ the bounded derived category of $A$-mod.
 For $\cC\in \mathrm{Comp}^b(A)$ there is a complex $\cC^{\mathrm{red}}$ such that $\cC\simeq \cC^{\mathrm{red}}$ in $\mathrm{Ho}^b(A)$ and  $\cC^{\mathrm{red}}$ has no non-zero summand which is homotopy equivalent to $0$.
 In other words, $\cC\simeq \cC^{\mathrm{red}}\oplus \cC_0$ with $\cC_0$ homotopy equivalent to zero.
 Denote by $\End_A^\bullet(\cC)$ the total Hom-complex, with degree $n$ term $\bigoplus_{j-i=n}\mathrm{Hom}(C^i,C^j)$.
 
 Let $B$ be a $\Lambda$-algebra, finitely generated and projective as a $\Lambda$-module.
 Let $\cC$ be a bounded complex of $A$-$B$-modules, finitely generated and projective as left $A$-modules and as right $B$-modules.
 We say that $\cC$ induces a \emph{Rickard equivalence} between the algebras $A$ and $B$ if the canonical map $A\to \End^\bullet_{B^{\opp}}(\cC)^{\opp}$ is an isomorphism in $\mathrm{Ho}^b(B\otimes_\Lambda B^{\opp})$ and the canonical map  $B\to \End^\bullet_A(\cC)$ is an isomorphism in $\mathrm{Ho}^b(A\otimes_\Lambda A^{\opp})$.
 According to \cite[\S2.1]{Ri96}, $A$ and $B$ are Rickard equivalent if and only if they are derived equivalent.
 
 Let $M$ be an $A$-$B$-module.
 Then $M$ induces a Morita equivalence between $A$ and $B$ if and only if the complex $M[0]$ induces a Rickard equivalence between $A$ and $B$.
 
 Let $G$ be a finite group and $Q$ an $\ell$-subgroup of $G$.
 Since $\Br_Q$ is an additive functor,  it respects homotopy equivalences and thus extends to a functor
 $$\Br_Q^G:\mathrm{Ho}^b(\Lambda G)\to\mathrm{Ho}^b(k \N_G(Q)/Q).$$

 Let $G$ be a finite group. We denote by $G^{\opp}$ the opposite group to $G$.
 Put $$\Delta G=\{(g,g^{-1})\mid g\in G \}\subseteq G\times G^{\opp}.$$
 Let $H$ be a subgroup of $G$ and $\cC\in\mathrm{Comp}^b(\Lambda G\times (\Lambda H)^{\opp})$. 
 We say $\cC$  is  \emph{splendid} if $\cC^{\mathrm{red}}$ is a complex of $\ell$-permutation modules whose indecomposable direct summands have a vertex contained in $\Delta H$.
 Let $e\in\Lambda G$ and $f\in\Lambda H$ be central idempotents.
 If $\cC$ is splendid and induces a Rickard equivalence between $\Lambda Ge$ and $\Lambda Hf$, then we say that $\cC$ induces a \emph{splendid Rickard equivalence} between $\Lambda Ge$ and $\Lambda Hf$.
 See \cite[\S9]{Lin18} for more facts on the splendid Rickard equivalence and other equivalences between blocks of group algebras.

\section{Modular character triples and the inductive BAW condition}   \label{sec:char-triple}

Let $G$ be a finite group, $N\unlhd G$ and $\theta\in \Irr(N)\cup \IBr(N)$ with $G_\theta=G$.
Then we call $(G,N,\theta)$ a \emph{modular character triple} (or \emph{character triple}) if $\theta$ is a Brauer (or ordinary) character.
We follow the definitions and notation for the isomorphisms of character triples from \cite[Definition~(11.23)]{Is76} and of modular character triples from \cite[Definition~8.25]{Na98}.
If $(\sigma,\tau):(G,N,\theta)\to (H,M,\varphi)$ is an isomorphism of modular character triples, then $\tau$ is a group isomorphism $G/N\to H/M$ and for every $N\le J\le G$, $\sigma$ yields a bijection $\sigma_J:\BrCh(J\mid\theta)\to\BrCh(J^\tau\mid \vhi)$ with additional properties, where $\tau(J/N)=J^\tau/N$. 
The case of isomorphism of character triples is completely analogous, by replacing $\BrCh(\cdot\mid \cdot)$ to be $\Char(\cdot\mid \cdot)$.
For more basic facts on isomorphisms between (modular) character triples, see \cite[\S 11]{Is76} and \cite[\S8]{Na98}.

Furthermore, we shall also assume that all of our (modular) character triple isomorphisms are \emph{strong} in the sense of \cite[Exercise~(11.13)]{Is76} and \cite[p.~281]{SV16}.
For $N\le J\le G$, $\chi\in\IBr(J\mid\theta)$ and $\bar g=gN\in G/N$, we define $\chi^{\bar g}(x^g)=\chi(x)$. Note that this is well-defined. The modular character triple isomorphism $(\sigma,\tau):(G,N,\theta)\to (H,M,\varphi)$ is strong means $\sigma_J(\chi)^{\tau(\bar g)}=\sigma_{J^g}(\chi^{\bar g})$ for every $\bar g\in G/N$, all groups $J$ with $N\le J\le G$ and all $\chi\in\IBr(J\mid \theta)$. The situation for character triples is completely analogous.
In this section, when considering the (modular) character triples $(G,N,\theta)$ and $(H,M,\varphi)$, we always have  $G=NH$ and the considered isomorphism $\tau:G/N\to H/M$ is chosen to be the natural isomorphism.
In this way, the isomorphism between these two (modular) character triples is determined by $\sigma$ and then we abbreviate $(\sigma,\tau)$ to $\sigma$.

\subsection{Block isomorphisms between modular character triples}

Let $(G,N,\theta)$ be a modular character triple.
A projective representation $\cP:G\to\GL_{\theta(1)}(k)$ is said to be \emph{associated to} $(G,N,\theta)$ if the restriction $\cP|_N$ affords $\theta$ and $\cP(ng)=\cP(n)\cP(g)$ and $\cP(gn)=\cP(g)\cP(n)$ for all $n\in N$ and $g\in G$.

We recall our favorite isomorphisms between modular character triples from \cite{Sp17,Sp18}.
Let $(G,N,\theta)$ and $(H,M,\vhi)$ be modular character triples with $H\le G$.
We write $(G,N,\theta)\geqslant(H,M,\vhi)$ if 
\begin{itemize}
\item $G=NH$ and $M=N\cap H$,
\item there exist projective representations $\cP$ and $\cP'$ associated to $(G,N,\theta)$ and $(H,M,\vhi)$  with factor sets $\alpha$ and $\alpha'$, respectively, such that $\alpha|_{H\ti H}=\alpha'$.
\end{itemize}
Then we say that $(G,N,\theta)\geqslant(H,M,\vhi)$ is given by $(\cP,\cP')$.
By \cite[Thm.~3.1]{SV16}, one has

\begin{thm}\label{thm:triple-isomor}
Let $(G,N,\theta)\geqslant(H,M,\vhi)$ be given by $(\cP,\cP')$.
Then for every intermediate subgroup $N\le J\le G$,  the linear map \[\sigma_J:\BrCh(J\mid \theta)\to\BrCh(J\cap H\mid\vhi)\] given by
$$\mathrm{trace}(\mathcal Q\otimes \cP|_J)\mapsto \mathrm{trace}(\mathcal Q\otimes \cP'|_{J\cap H}),$$
for any projective representation $\mathcal Q$ of $J/N\cong J\cap H/M$ whose factor set is inverse to the one of $\cP|_{J}$,
is a well-defined bijection and
 induces a strong isomorphism between $(G,N,\theta)$ and $(H,M,\vhi)$. 
\end{thm}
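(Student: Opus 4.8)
The plan is to reduce the statement to the Clifford theory of projective representations attached to a modular character triple, developed in \cite[\S8]{Na98}, and then to transfer Brauer characters between $G$ and $H$ through the twisted group algebra that $\cP$ and $\cP'$ come to share once their factor sets are identified on $H\ti H$.

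First I would record the structural fact that, for each intermediate subgroup $N\le J\le G$, every $\chi\in\BrCh(J\mid\theta)$ has the form $\mathrm{trace}(\mathcal Q\otimes\cP|_J)$ for some projective representation $\mathcal Q$ of $J/N$ whose factor set is inverse to that of $\cP|_J$, and that the Brauer character afforded by $\mathcal Q$ is uniquely determined by $\chi$; this follows from the Clifford theory of projective representations in \cite[\S8]{Na98}, using that $\theta$ is $J$-invariant and that $\cP|_N$ affords $\theta$. Granting this, the formula $\sigma_J(\chi):=\mathrm{trace}(\mathcal Q\otimes\cP'|_{J\cap H})$ makes sense provided one checks that $\mathcal Q$, read through the natural isomorphism $J/N\cong(J\cap H)/M$ (which exists because $G=NH$ and $M=N\cap H$), has factor set inverse to that of $\cP'|_{J\cap H}$; this is exactly where the hypothesis $\alpha|_{H\ti H}=\alpha'$ is used. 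Linearity of $\sigma_J$ is then immediate from additivity of the trace over direct sums of the $\mathcal Q$'s, and bijectivity follows by running the symmetric construction with the roles of $(\cP,G)$ and $(\cP',H)$ exchanged, which supplies a two-sided inverse.

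Next I would check that the family $(\sigma_J)_{N\le J\le G}$ is a strong isomorphism of modular character triples in the sense of \cite[Definition~8.25]{Na98} and \cite[p.~281]{SV16}. Compatibility with multiplication by $\IBr(J/N)$ and with restriction $\BrCh(J\mid\theta)\to\BrCh(J'\mid\theta)$ for $N\le J'\le J$ is built into the defining formula, since tensoring by $\mathcal Q$ commutes both with restriction and with tensoring by inflated characters. For the block clause I would argue that $\sigma_J$ is induced by the isomorphism of twisted group algebras $k_\alpha[J/N]\cong k_{\alpha'}[(J\cap H)/M]$ coming from the common factor set and that this algebra isomorphism matches central idempotents, hence blocks. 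Finally, for strongness one uses that $\cP$ and $\cP'$ are projective representations of the full groups $G$ and $H$, not merely of $J$ and $J\cap H$: for $\bar g=gN\in G/N$, conjugation by $g$ turns $\mathrm{trace}(\mathcal Q\otimes\cP|_J)$ into $\mathrm{trace}(\mathcal Q^{\bar g}\otimes\cP|_{J^g})$ up to a factor-set adjustment that, thanks to $\alpha|_{H\ti H}=\alpha'$, is the same on the $H$-side; writing $g=nh$ with $n\in N$ and $h\in H$ (possible since $G=NH$) and repeating the computation through $\cP'$ then yields $\sigma_J(\chi)^{\tau(\bar g)}=\sigma_{J^g}(\chi^{\bar g})$.

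The step I expect to be the main obstacle is the block clause: one must show not merely that $\sigma_J$ is a bijection onto $\BrCh(J\cap H\mid\vhi)$ but that it respects the $\ell$-block structure in the precise sense required of a modular character triple isomorphism, i.e.\ that corresponding Brauer characters have matching central characters after the identification $Z(\cO[J/N]_\alpha)\cong Z(\cO[(J\cap H)/M]_{\alpha'})$. Doing this cleanly calls for compatible lifts of $\cP$, $\cP'$ and of $\alpha$, $\alpha'$ to $\cO$ and for verifying that the induced map is an isomorphism of $\cO$-algebras with the correct interior structure; with that in hand the remaining axioms are formal. All of this is worked out in \cite{SV16}.
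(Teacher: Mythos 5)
The paper itself gives no argument here: it simply cites the result as \cite[Thm.~3.1]{SV16}. Your proposal reconstructs the standard Clifford-theoretic argument that underlies that theorem, and the core of what you write is sound: the uniqueness (at the level of Brauer characters) of the projective representation $\mathcal Q$ attached to a given $\chi\in\BrCh(J\mid\theta)$, the use of $\alpha|_{H\times H}=\alpha'$ under the natural isomorphism $J/N\cong(J\cap H)/M$ to make the formula well posed, bijectivity by comparing the two Clifford-type parametrizations, the built-in compatibility with restriction and with $\IBr(J/N)$-multiplication, and the verification of strongness via $g=nh$ and the defining relations $\cP(ng)=\cP(n)\cP(g)$, $\cP(gn)=\cP(g)\cP(n)$.

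However, the step you single out as ``the main obstacle'' is not part of the statement. A (strong) isomorphism of modular character triples in the sense of \cite[Def.~8.25]{Na98} and \cite[p.~281]{SV16} consists only of the bijections $\sigma_J$ together with compatibility with restriction, with multiplication by $\IBr(J/N)$, and (for ``strong'') with conjugation; no $\ell$-block condition appears. Block compatibility is encoded in the strictly stronger relation $\geqslant_b$ of Definition~4.19 of \cite{Sp18} (recalled just after the theorem), which imposes \emph{additional} hypotheses — $\C_G(N)\le H$, a compatibility of scalars for central elements, a condition on defect groups, and $\bl(\psi)=\bl(\sigma_J(\psi))^J$ — and is established in the paper as a separate layer on top of $\geqslant$. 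So Theorem~\ref{thm:triple-isomor} is proved once you have bijectivity, the two compatibilities, and strongness; you would not need the block clause at all. Incidentally, even if one did want the block statement, matching central characters of the twisted quotient algebras $Z(\cO_\alpha[J/N])\cong Z(\cO_{\alpha'}[(J\cap H)/M])$ is not the relevant assertion — $\geqslant_b$ concerns Brauer's induced block correspondence between blocks of $J$ and of $J\cap H$, which is a genuinely different statement and does not follow from the bare hypothesis $\alpha|_{H\times H}=\alpha'$.
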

We say that the $\sigma$ in Theorem \ref{thm:triple-isomor} is \emph{induced} by $(\cP,\cP')$.

Let $(G,N,\theta)\geqslant(H,M,\vhi)$ be given by $(\cP,\cP')$.
Then following Definition 4.19 of \cite{Sp18},
we write  $(G,N,\theta)\geqslant_b(H,M,\vhi)$ if
\begin{itemize}
	\item $\C_G(N)\le H$ and for every $c\in\C_G(N)$  the scalars associated to $\cP(c)$ and $\cP'(c)$ coincide,
	\item there is a defect group $D$ of $\bl(\vhi)$ such that $\C_G(D)\le H$, and
\item the maps $\sigma_J$ induced by $(\cP,\cP')$ satisfy $\bl(\psi)=\bl(\sigma_J(\psi))^J$ for every $N\le J\le G$ and $\psi\in\IBr(J\mid\theta)$.
\end{itemize}
Then we say that $(G,N,\theta)\geqslant_b(H,M,\vhi)$ is given by $(\cP,\cP')$.
``$\geqslant_b$" is  called the \emph{block isomorphism} between modular character triples. 

\begin{thm}[Butterfly theorem]\label{thm:butt-thm}
Let $(G_1,N,\theta)\geqslant_b(H_1,M,\vhi)$.
Suppose that $G_2$ is a finite group with normal subgroup $N$ such that $G_1/\C_{G_1}(N)$ and $G_2/\C_{G_2}(N)$ coincide as subgroups of $\Aut(N)$.
Denote by $H_2$ the subgroup of $G_2$ containing $\C_{G_2}(N)$ with $H_2/\C_{G_2}(N)=H_1/\C_{G_1}(N)$ in $\Aut(N)$.
Then 
$(G_2,N,\theta)\geqslant_b(H_2,M,\vhi)$.
\end{thm}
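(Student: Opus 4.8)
The plan is to carry out the classical \emph{butterfly} construction and then verify that ``$\geqslant_b$'' is insensitive to a normal subgroup that centralizes $N$ and meets it trivially. Put $A:=G_1/\C_{G_1}(N)=G_2/\C_{G_2}(N)\le\Aut(N)$, let $\eps_i\colon G_i\twoheadrightarrow A$ be the conjugation maps (so $\ker\eps_i=\C_{G_i}(N)$), and form the fibre product
\[
  \widehat G:=G_1\times_A G_2=\{(g_1,g_2)\in G_1\times G_2\mid\eps_1(g_1)=\eps_2(g_2)\},
\]
with surjective projections $\pi_i\colon\widehat G\to G_i$. Since $\eps_1(n)=\eps_2(n)$ for $n\in N$, the diagonal $\widehat N:=\{(n,n)\mid n\in N\}$ is normal in $\widehat G$ and each $\pi_i$ restricts to an isomorphism $\widehat N\xrightarrow{\,\sim\,}N$; let $\widehat\theta\in\IBr(\widehat N)$ correspond to $\theta$, which is $\widehat G$-invariant since $\theta$ is $A$-invariant. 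Set $\widehat H:=\pi_1^{-1}(H_1)$; using $\C_{G_1}(N)\le H_1$ (first clause of ``$\geqslant_b$'') and $\C_{G_2}(N)\le H_2$ (hypothesis) together with $H_1/\C_{G_1}(N)=H_2/\C_{G_2}(N)$, one checks $\widehat H=\pi_2^{-1}(H_2)$, so $\widehat H$ is well defined; then $\widehat M:=\widehat H\cap\widehat N$ is the diagonal copy of $M$, with corresponding character $\widehat\vhi\in\IBr(\widehat M)$. Finally $K_i:=\ker\pi_i$ is $1\times\C_{G_2}(N)$ for $i=1$ and $\C_{G_1}(N)\times1$ for $i=2$, so $K_i\unlhd\widehat G$, $K_i\cap\widehat N=1$, $K_i\le\C_{\widehat G}(\widehat N)\cap\widehat H$, and $\pi_i$ induces an isomorphism $\widehat G/K_i\cong G_i$ carrying $(\widehat N,\widehat H,\widehat M,\widehat\theta,\widehat\vhi)$ to $(N,H_i,M,\theta,\vhi)$. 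Hence the theorem will follow from the assertion that ``$\geqslant_b$'' is preserved under passage along such a quotient map, applied once with $(\widehat G,K_1)$ (to lift the hypothesis to $\widehat G$) and once with $(\widehat G,K_2)$ (to push it down to $G_2$).

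The statement to establish is therefore: \emph{let $(G,N,\theta)$ be a modular character triple, $N\le H\le G$, and $(H,H\cap N,\vhi)$ a modular character triple; suppose $Z\unlhd G$ with $Z\cap N=1$, $Z\le\C_G(N)$ and $Z\le H$; write $\overline G:=G/Z$, $g\mapsto\overline g$ for the quotient map, and let $\overline\theta,\overline\vhi$ be the characters of $\overline N\cong N$ and $\overline H\cap\overline N\cong H\cap N$ corresponding to $\theta,\vhi$. Then $(G,N,\theta)\geqslant_b(H,H\cap N,\vhi)$ if and only if $(\overline G,\overline N,\overline\theta)\geqslant_b(\overline H,\overline H\cap\overline N,\overline\vhi)$.} For the underlying relation ``$\geqslant$'', the key point is that \emph{every} projective representation $\cP$ associated to $(G,N,\theta)$ is scalar-valued on $\C_G(N)$: for $z\in\C_G(N)$ the defining identities give $\cP(z)\cP(n)=\cP(zn)=\cP(nz)=\cP(n)\cP(z)$ for all $n\in N$, so $\cP(z)$ commutes with the irreducible $\cP|_N$ and is scalar by Schur's lemma. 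Hence $\cP$ factors through a projective representation $\overline\cP$ of $\overline G$ associated to $(\overline G,\overline N,\overline\theta)$ with factor set deflated to $\overline G$, and conversely every such $\overline\cP$ inflates to $G$; the same applies to $\cP'$ on $H$, using $Z\le H$. Combined with $G=NH\Leftrightarrow\overline G=\overline N\,\overline H$ and $\overline H\cap\overline N=\overline{H\cap N}$ (Dedekind, since $Z\le H$ and $Z\cap N=1$), this yields a bijection between the witnesses $(\cP,\cP')$ on the two sides under which, for intermediate $J$ containing $Z$, the maps $\sigma_J$ of Theorem~\ref{thm:triple-isomor} are compatible with inflation from the quotients $J/Z$. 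The first block clause now transfers at once, as $\C_{\overline G}(\overline N)=\overline{\C_G(N)}$ (because $[g,N]\subseteq N$ and $N\cap Z=1$) and the scalar attached to $\cP(c)$, resp.\ $\cP'(c)$, for $c\in\C_G(N)$ is exactly that of $\overline\cP(\overline c)$, resp.\ $\overline\cP'(\overline c)$. The second transfers because a defect group $D$ of $\bl(\vhi)$ is an $\ell$-subgroup of $H\cap N$, hence of $N$, so $\overline D\cong D$ is a defect group of $\bl(\overline\vhi)$ and $\C_{\overline G}(\overline D)=\overline{\C_G(D)}$ by the same computation, whence $\C_G(D)\le H\Leftrightarrow\C_{\overline G}(\overline D)\le\overline H$.

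The remaining condition — the third clause of ``$\geqslant_b$'', that $\bl_J(\psi)=\bl_{J\cap H}(\sigma_J(\psi))^{J}$ for \emph{every} intermediate $N\le J\le G$ and \emph{every} $\psi\in\IBr(J\mid\theta)$ — is where the substance lies, and it is the step I expect to be the main obstacle. On the $\overline G$ side this ranges precisely over the pairs $(J,\psi)$ with $Z\le J$ and $Z\le\ker\psi$; for such pairs the two equalities agree under the correspondences above, so one implication is immediate. The real task is the converse: to deduce the whole family of block equalities over $G$ from this ``inflated'' subfamily. I would do this using the compatibility of the $\sigma_J$ with restriction and induction among intermediate subgroups (part of their being a strong character-triple isomorphism) together with the interaction of block induction $\bl(\,\cdot\,)^{J}$ with the surjections $J\twoheadrightarrow J/(J\cap Z)$ — which is controlled because $Z\le\C_G(N)$ centralizes every defect group of every block $\bl_J(\psi)$, $\psi\in\IBr(J\mid\theta)$, and meets $N$ trivially, so that the relevant $B$-Brauer pairs, hence the Brauer correspondence, do not detect $Z$. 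Granting this, the equivalence applied to $(\widehat G,K_1)$ turns the hypothesis $(G_1,N,\theta)\geqslant_b(H_1,M,\vhi)$ into $(\widehat G,\widehat N,\widehat\theta)\geqslant_b(\widehat H,\widehat M,\widehat\vhi)$, and the equivalence applied to $(\widehat G,K_2)$ turns that into $(G_2,N,\theta)\geqslant_b(H_2,M,\vhi)$, which is the claim of the theorem.
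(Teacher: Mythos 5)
Your overall strategy --- pass through the fibre product $\widehat G=G_1\times_A G_2$ and reduce everything to the invariance of ``$\geqslant_b$'' under quotients by a normal subgroup $Z\le \C_G(N)\cap H$ with $Z\cap N=1$ --- is a legitimate route, and the formal bookkeeping (normality of $\widehat N$, $\widehat H=\pi_1^{-1}(H_1)=\pi_2^{-1}(H_2)$, $K_i\le\C_{\widehat G}(\widehat N)\cap\widehat H$, $K_i\cap\widehat N=1$, the Schur-lemma argument that any associated projective representation is scalar on $\C_G(N)$) is correct. Note that the paper does not prove this statement at all; it simply cites Sp\"ath \cite[Thm.~3.5]{Sp17}, so you are attempting a from-scratch argument.

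The difficulty is that your proof has a genuine, self-acknowledged gap exactly at the step that carries the entire content of the theorem: the transfer of the third clause of ``$\geqslant_b$'' (the block-induction equalities $\bl_J(\psi)=\bl_{J\cap H}(\sigma_J(\psi))^J$) \emph{from} the quotient $\widehat G/K_1\cong G_1$ \emph{up to} $\widehat G$. As you observe, the hypothesis on $G_1$ only gives these equalities for the pairs $(J,\psi)$ with $K_1\le J$ and $K_1\le\ker\psi$, whereas the definition of $(\widehat G,\widehat N,\widehat\theta)\geqslant_b(\widehat H,\widehat M,\widehat\vhi)$ demands them for \emph{every} intermediate subgroup $\widehat N\le J\le\widehat G$ (most of which do not contain $K_1$) and \emph{every} $\psi\in\IBr(J\mid\widehat\theta)$ (including those nontrivial on $J\cap K_1$). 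Your sketch of how to bridge this --- ``compatibility of the $\sigma_J$ with restriction and induction'' plus the assertion that block induction commutes with the surjections $J\twoheadrightarrow J/(J\cap Z)$ because ``the relevant Brauer pairs do not detect $Z$'' --- is not a proof: the commutation of block induction with passage to quotients is itself a delicate statement (of the type \cite[Prop.~2.4]{Mu98}, \cite[Lemma~2.3]{KS15}) requiring hypotheses and verification, all the more so here since $Z$ is neither an $\ell$-group nor an $\ell'$-group in general, and the deflation/inflation of the projective representations must be set up so that the factor sets and the maps $\sigma_J$ for the non-inflated subgroups $J$ are actually controlled. Since everything after ``Granting this'' depends on it, the argument as written does not establish the theorem; you should either supply this lemma in full or, as the paper does, invoke \cite[Thm.~3.5]{Sp17} directly.
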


\begin{proof}
See \cite[Thm.~3.5]{Sp17}.	
\end{proof}	

\begin{lem}\label{lem:char-triple-iso-exten}
Let $(G,N,\theta)\geqslant_b(H,M,\vhi)$ be given by $(\cP,\cP')$ and let $\sigma$ be induced by $(\cP,\cP')$ as in Theorem \ref{thm:triple-isomor}.
Then for $N\le J\le G$  the map $$\sigma_J:\IBr(J\mid\theta)\to \IBr(J\cap H\mid \vhi)$$ is an $\N_H(J)$-equivariant bijection such that $$(\N_G(J)_\chi,J,\chi)\geqslant_b (\N_H(J)_\psi,J\cap H, \psi)$$ for every $\chi\in\IBr(J\mid \theta)$ and $\psi=\sigma_J(\chi)$.
In addition, if $J\le L\le \N_G(J)$ with abelian quotient $L/J$, then $\sigma_J$ is also $\IBr(L/J)$-equivariant.
\end{lem}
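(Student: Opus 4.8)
The plan is to read off the bijectivity and equivariance statements from the explicit description of $\sigma_J$ in Theorem~\ref{thm:triple-isomor}, and then to produce from $(\cP,\cP')$ projective representations witnessing the block isomorphism of the triples attached to $J$. First I would record the group‑theoretic facts that follow from $G=NH$, $M=N\cap H$ and $N\le J$: writing $x=n\cdot(n^{-1}x)$ with $n\in N\le J$ shows $\N_G(J)_\chi=N\cdot\N_H(J)_\chi$; moreover $\N_G(J)_\chi\cap H=\N_H(J)_\chi$, $J\cap\N_H(J)_\chi=J\cap H$, and $\C_G(J)\le\C_G(N)\le H$. Bijectivity of $\sigma_J$ is part of Theorem~\ref{thm:triple-isomor}. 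For the $\N_H(J)$‑equivariance I would use that $\sigma$ is \emph{strong}: the identity $\sigma_J(\chi)^{\tau(\bar g)}=\sigma_{J^g}(\chi^{\bar g})$ applied to $g\in\N_H(J)$ (so $J^g=J$, and $\tau(\bar g)$ acts on $\IBr(J\cap H\mid\vhi)$ as conjugation by $g$ because $g\in H$) gives $\sigma_J(\chi^g)=\sigma_J(\chi)^g$; in particular $\N_H(J)_\chi=\N_H(J)_\psi$, so the above identities hold with $\N_H(J)_\psi$ in place of $\N_H(J)_\chi$. The $\IBr(L/J)$‑equivariance of the final assertion follows by the same manipulation of the trace formula.

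For the block isomorphism of character triples I would argue in two steps. \textbf{Step 1: restrict the ambient groups.} From $(G,N,\theta)\geqslant_b(H,M,\vhi)$ given by $(\cP,\cP')$ one obtains $(\N_G(J)_\chi,N,\theta)\geqslant_b(\N_H(J)_\psi,M,\vhi)$, given by the restrictions of $\cP$ and $\cP'$: the equalities $\N_G(J)_\chi=N\cdot\N_H(J)_\psi$ and $N\cap\N_H(J)_\psi=M$ set this up, the central‑scalar condition is inherited, $\C_{\N_G(J)_\chi}(D)\le\C_G(D)\cap\N_G(J)_\chi\le\N_H(J)_\psi$ for the defect group $D$ of $\bl(\vhi)$ supplied by the hypothesis, and the block condition for intermediate subgroups of $\N_G(J)_\chi$ is a special case of that for $G$. \textbf{Step 2: lift the normal subgroup from $N$ to $J$.} As in the definition of $\sigma_J$, pick a projective representation $\cQ$ of $J/N$ with $\mathrm{trace}(\cQ\otimes\cP|_J)$ affording $\chi$. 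Since $\chi$ and $\theta$ are $\N_G(J)_\chi$‑invariant, $\cQ$ extends to a projective representation $\hat\cQ$ of $\N_G(J)_\chi/N$, and after a suitable normalization of $\hat\cQ$ the representations
$$\hat\cP:=\hat\cQ\otimes\cP|_{\N_G(J)_\chi},\qquad \hat\cP':=\hat\cQ'\otimes\cP'|_{\N_H(J)_\psi}$$
(where $\hat\cQ'$ is the transport of $\hat\cQ$ along $\tau\colon\N_G(J)_\chi/N\xrightarrow{\sim}\N_H(J)_\psi/M$) are associated to $(\N_G(J)_\chi,J,\chi)$ and to $(\N_H(J)_\psi,J\cap H,\psi)$ respectively and have equal factor sets on $\N_H(J)_\psi\times\N_H(J)_\psi$; hence $(\N_G(J)_\chi,J,\chi)\geqslant(\N_H(J)_\psi,J\cap H,\psi)$ is given by $(\hat\cP,\hat\cP')$. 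The virtue of this particular choice is that, by the trace formula, the maps induced by $(\hat\cP,\hat\cP')$ are precisely the restrictions of the $\sigma_{J'}$ to $\IBr(J'\mid\chi)$, for $J\le J'\le\N_G(J)_\chi$.

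It then remains to verify the three conditions in the definition of ``$\geqslant_b$'' for $(\hat\cP,\hat\cP')$. For $c\in\C_{\N_G(J)_\chi}(J)=\C_G(J)\le\C_G(N)$, Schur's lemma makes $\hat\cQ(cN)$ and $\cP(c)$ scalar, so $\hat\cP(c)$ is scalar with scalar the product of these, and symmetrically on the $H$‑side; the two scalars agree because $\cP$ and $\cP'$ already agree on $\C_G(N)$‑scalars and $\hat\cQ'$ is the $\tau$‑transport of $\hat\cQ$, while $\C_G(J)\le\N_H(J)_\psi$. For the defect‑group condition, $\bl(\psi)$ covers $\bl(\vhi)$, so it has a defect group $D'$ with $D'\cap M$ a defect group of $\bl(\vhi)$; placing the $D$ of Step~1 inside $D'$ gives $\C_{\N_G(J)_\chi}(D')\le\C_G(D)\cap\N_G(J)_\chi\le\N_H(J)_\psi$. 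For the block condition, the image of $\xi\in\IBr(J'\mid\chi)$ under the map induced by $(\hat\cP,\hat\cP')$ is $\sigma_{J'}(\xi)$, so $\bl(\xi)=\bl(\sigma_{J'}(\xi))^{J'}$ is exactly the block condition already known for $(\cP,\cP')$. This yields $(\N_G(J)_\chi,J,\chi)\geqslant_b(\N_H(J)_\psi,J\cap H,\psi)$ given by $(\hat\cP,\hat\cP')$, as required.

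The main obstacle is Step~2: one must manufacture $(\hat\cP,\hat\cP')$ that are simultaneously associated to the lifted triples, have matching factor sets, and induce the restrictions of $\sigma$. These requirements pull against one another and are reconciled only after carefully choosing the normalizing scalar function on $\N_G(J)_\chi/N$, whose consistency ultimately rests on the identity $\alpha_{\hat\cQ}|_{J/N}=(\alpha_{\cP}|_J)^{-1}$. This, together with the somewhat delicate defect‑group bookkeeping in the second ``$\geqslant_b$''‑condition, is where the real work of the proof lies.
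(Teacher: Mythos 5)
Your proposal is correct and takes the same route as the paper, whose proof is simply the citation ``follows by the same arguments as in \cite[Prop.~3.9~(b)]{NS14}'': restrict $(\cP,\cP')$ to $\N_G(J)_\chi$ and its intersection with $H$, then lift the normal subgroup from $N$ to $J$ by tensoring with an extension $\hat{\cQ}$ of $\cQ$, reading off equivariance from strongness and the trace formula. The one step you assert rather than prove---that $\cQ$ extends to $\N_G(J)_\chi/N$ with the factor-set normalization making the induced maps the restrictions of $\sigma$---is precisely the content of the cited result, and you correctly identify it as the crux.
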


\begin{proof}
Follows by the same arguments as in \cite[Prop.~3.9 (b)]{NS14}.
\end{proof}

\begin{prop}\label{prop:indu-triple}
Let $N\unlhd G$, $H\le G$ and $M=N\cap H$ be such that $G=NH$.
Let $G_1\le G$, $H_1:=H\cap G_1$, $N_1:=N\cap G_1$ and $M_1:=M\cap G_1$ be such that $H=H_1M$ and $G=G_1N$. 
Suppose that $(G_1,N_1,\theta_1)\geqslant_b (H_1,M_1,\vhi_1)$ for $\theta_1\in\IBr(N_1)$ and $\vhi_1\in\IBr(M_1)$.
Assume that $\theta=\Ind^{N}_{N_1}(\theta_1)\in\Irr(N)$, $\vhi=\Ind^{M}_{M_1} (\vhi_1)\in\Irr(M)$, and
for every $J$ with $N\le J\le G$ induction gives a bijection between $\Irr(J\cap G_1\mid \theta_1)$ and $\Irr(J\mid \theta)$, as well as between $\Irr(J\cap H_1\mid \vhi_1)$ and $\Irr(J\cap H\mid \vhi)$.
Assume that for some defect group $D$ of $\bl(\varphi)$ one has $C_G(D)\le H$.
Then $(G,N,\theta)\geqslant_b(H,M,\vhi)$.
\end{prop}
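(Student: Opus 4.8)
The plan is to realize $(G,N,\theta)\geqslant_b(H,M,\vhi)$ by explicit projective representations built from the ones witnessing $(G_1,N_1,\theta_1)\geqslant_b(H_1,M_1,\vhi_1)$: I would first produce $\cP,\cP'$ exhibiting $(G,N,\theta)\geqslant(H,M,\vhi)$, and then verify in turn the three conditions defining ``$\geqslant_b$'' (Definition~4.19 of \cite{Sp18}), each time pulling it back to the corresponding condition for the smaller triples. The relevant bookkeeping: $G=G_1N$ with $N_1=G_1\cap N$ gives a canonical isomorphism $G/N\cong G_1/N_1$, $H=H_1M$ with $M_1=H_1\cap M$ gives $H/M\cong H_1/M_1$ compatibly with $H/M\hookrightarrow G/N$, and by the modular law $J=J_1N$ with $J_1:=J\cap G_1$, $J_1\cap N=N_1$ for every $N\le J\le G$, while $J\cap H=(J\cap H_1)M$ and $(J\cap H_1)\cap M=M_1$; since $\theta_1$ is $G_1$-invariant and $\theta$ is $N$-invariant, $\theta=\Ind^N_{N_1}\theta_1$ is $G$-invariant, and likewise $\vhi$ is $H$-invariant, so $(G,N,\theta)$ and $(H,M,\vhi)$ really are modular character triples.

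For the construction, let $V_1$ be the simple $kN_1$-module affording $\theta_1$; by hypothesis $V:=\Ind^N_{N_1}V_1=kN\otimes_{kN_1}V_1$ affords $\theta$ and is therefore simple. Given $\cP_1$ associated with $(G_1,N_1,\theta_1)$, of factor set $\alpha_1$, define $\cP$ on $G=G_1N$ by letting $\cP(n)$ be the left translation action of $n\in N$ on $V$ and $\cP(g)(x\otimes v)={}^{g}x\otimes\cP_1(g)v$ for $g\in G_1$. The two recipes agree on $N_1$ because $\cP_1|_{N_1}$ affords $\theta_1$, so $\cP$ is a well-defined projective representation associated with $(G,N,\theta)$ whose factor set $\alpha$ satisfies $\alpha(g,g')=\alpha_1(g,g')$ for $g,g'\in G_1$; the same construction applied to $\cP_1'$ with $H=H_1M$ yields $\cP'$ associated with $(H,M,\vhi)$, of factor set $\alpha'$ with $\alpha'(h,h')=\alpha_1'(h,h')$ for $h,h'\in H_1$. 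Since $\alpha$ (resp. $\alpha'$) is constant on $N$-cosets (resp. $M$-cosets) in each argument and $M\le N$, writing $h=h_1m$, $h'=h_1'm'$ in $H=H_1M$ gives $\alpha(h,h')=\alpha(h_1,h_1')=\alpha_1(h_1,h_1')=\alpha_1'(h_1,h_1')=\alpha'(h_1,h_1')=\alpha'(h,h')$, the middle equality being $\alpha_1|_{H_1\times H_1}=\alpha_1'$ from $(G_1,N_1,\theta_1)\geqslant(H_1,M_1,\vhi_1)$. Thus $(G,N,\theta)\geqslant(H,M,\vhi)$ is given by $(\cP,\cP')$, and I let $\sigma$ be the strong isomorphism it induces via Theorem~\ref{thm:triple-isomor}.

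Now for the defining conditions of $\geqslant_b$. A defect group $D$ of $\bl(\vhi)$ lies in $M\le N$, so the standing hypothesis $\C_G(D)\le H$ is exactly the second condition, and it also gives $\C_G(N)\le\C_G(D)\le H$. For $c\in\C_G(N)$ the endomorphism $\cP(c)$ of the absolutely simple module $V$ is a scalar; writing $c=g_1n_0$ with $g_1\in G_1$, $n_0\in N$ and decomposing $V=\bigoplus_{t\in T}(t\otimes V_1)$ over a transversal $T\ni 1$ of $N_1$ in $N$, the identity $\cP(c)(1\otimes v)={}^{g_1}n_0\otimes\cP_1(g_1)v$ can be a scalar multiple of $1\otimes v$ only if ${}^{g_1}n_0\in N_1$, which forces $n_0\in N_1$ and hence $c\in G_1$. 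Therefore $\C_G(N)\le G_1\cap H=H_1$ and $\C_G(N)\le\C_{G_1}(N_1)$; then $\cP(c)(1\otimes v)=1\otimes\cP_1(c)v$ shows the scalar of $\cP(c)$ equals the scalar of $\cP_1(c)$, similarly the scalar of $\cP'(c)$ equals that of $\cP_1'(c)$, and these two agree by the first condition of $(G_1,N_1,\theta_1)\geqslant_b(H_1,M_1,\vhi_1)$. This settles the first condition.

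It remains to check $\bl(\psi)=\bl(\sigma_J(\psi))^J$ for $N\le J\le G$ and $\psi\in\IBr(J\mid\theta)$. By hypothesis $\Ind^J_{J_1}$ and $\Ind^{J\cap H}_{J\cap H_1}$ are bijections onto $\IBr(J\mid\theta)$ and $\IBr(J\cap H\mid\vhi)$, and the smaller triple supplies $\sigma_{J_1}\colon\IBr(J_1\mid\theta_1)\to\IBr(J\cap H_1\mid\vhi_1)$. The key point is $\sigma_J\circ\Ind^J_{J_1}=\Ind^{J\cap H}_{J\cap H_1}\circ\sigma_{J_1}$: because the factor set of $\cP|_J$ factors through $J/N\cong J_1/N_1$ and there agrees with that of $\cP_1|_{J_1}$, one can use a common auxiliary projective representation $\mathcal Q_1$ of $J_1/N_1\cong J/N$, and then a projection-formula identification of $\mathcal Q\otimes\cP|_J$ with $\Ind^J_{J_1}(\mathcal Q_1\otimes\cP_1|_{J_1})$, and of $\mathcal Q\otimes\cP'|_{J\cap H}$ with $\Ind^{J\cap H}_{J\cap H_1}(\mathcal Q_1\otimes\cP_1'|_{J\cap H_1})$, yields the identity. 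Granting it, for $\psi=\Ind^J_{J_1}\psi_1$ with $\psi_1\in\IBr(J_1\mid\theta_1)$ I would use that the block of an irreducible induced Brauer character is the corresponding induced block, transitivity of block induction, and $\bl(\psi_1)=\bl(\sigma_{J_1}(\psi_1))^{J_1}$ from the third condition for the smaller relation, to compute $\bl(\psi)=\bl(\psi_1)^{J}=\bl(\sigma_{J_1}(\psi_1))^{J}=\bl\big(\Ind^{J\cap H}_{J\cap H_1}\sigma_{J_1}(\psi_1)\big)^{J}=\bl(\sigma_J(\psi))^{J}$, which together with the previous paragraphs establishes $(G,N,\theta)\geqslant_b(H,M,\vhi)$. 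I expect this last step to be the main obstacle: arranging $\cP$, $\cP'$ and the auxiliary projective representations coherently so that $\sigma_J$ commutes with induction \emph{exactly} and for all intermediate $J$ at once, and then checking that every block induction occurring -- in particular $\bl(\psi_1)^{J}$, $\bl(\sigma_{J_1}(\psi_1))^{J_1}$ and $\bl(\sigma_{J_1}(\psi_1))^{J\cap H}$ -- is defined and behaves transitively, even though none of $J_1$, $J\cap H$, $J\cap H_1$ contains $N$.
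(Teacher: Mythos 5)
Your outline is exactly the Brauer-character version of the argument the paper is pointing at when it writes ``completely analogous with the proof of \cite[Thm.~3.14]{NS14}'': induce the projective representations along $N_1\le N$ (resp.\ $M_1\le M$), check the factor-set condition by restricting to $G_1$, verify the $\C_G(N)$-scalar and defect-group clauses, and reduce the block-compatibility clause to the one for $(G_1,N_1,\theta_1)\geqslant_b(H_1,M_1,\vhi_1)$ through the commuting square $\sigma_J\circ\Ind^J_{J_1}=\Ind^{J\cap H}_{J\cap H_1}\circ\sigma_{J_1}$. The block-induction subtleties you flag at the end are real but routine: the hypothesis $\C_G(D)\le H$ guarantees that $\bl(\sigma_J(\psi))^J$ is defined, the identity $\bl(\Ind^J_{J_1}\psi_1)=\bl(\psi_1)^J$ for an irreducibly inducing $\psi_1$ is standard, and transitivity of block induction then closes the chain; so the proposal is correct and follows the same route the paper intends.
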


\begin{proof}
This is completely analogous with the proof of \cite[Thm.~3.14]{NS14}.
\end{proof}

\begin{defn}\label{def:iBAW-bijection}
	Let $G$ be a finite group and $B$ a union of blocks of $G$.
	Assume that for $\Ga:=\Aut(G)_B$ we have
	\begin{enumerate}[\rm(i)]
		\item there exists a $\Ga$-equivariant bijection $\Omega:\IBr(B)\to \Alp(B)$, and 
		\item for every $\psi\in\IBr(B)$ and $\overline{(Q,\vhi)}=\Omega(\psi)$,
		$$(G\rtimes \Ga_\psi,G,\psi)\geqslant_b((G\rtimes\Ga)_{Q,\vhi},N_G(Q),\vhi^0).$$
	\end{enumerate}
	Then we say that $\Omega:\IBr(B)\to \Alp(B)$ is an \emph{iBAW-bijection} for $B$.
\end{defn}

\subsection{Extensions of characters}

First we give an elementary lemma on the extensions of  defect zero characters.

\begin{lem}\label{lem:ext-def-zero}
Let $N\unlhd G$ be finite groups such that  for every prime $r$ dividing $|G/N|$ with $r\ne\ell$, the Sylow $r$-subgroups of $G/N$ are abelian.
Let $\theta\in\dz(N)$ with $G_\theta=G$.
Then $\theta$ extends to $G$ if and only if $\theta^0$ extends to $G$.
\end{lem}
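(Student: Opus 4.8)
The plan is to prove both directions, with the forward direction being the easy one. If $\theta$ extends to some $\widehat\theta\in\Irr(G)$, then $\widehat\theta{}^0\in\IBr(G)$ is a Brauer character whose restriction to $N$ is $\theta^0$ (since restriction to $\ell$-regular elements commutes with restriction to a normal subgroup), and $\widehat\theta{}^0(1)=\theta^0(1)=\theta(1)$, so $\widehat\theta{}^0$ is an extension of $\theta^0$ to $G$. Here I would note that $\theta^0$ is irreducible because $\theta\in\dz(N)$ is of defect zero, hence remains irreducible as a Brauer character, and its extension is automatically irreducible by the degree count together with Clifford theory (any constituent of $\widehat\theta{}^0|_N$ must be $\theta^0$, and the degrees force irreducibility).

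For the converse, suppose $\theta^0$ extends to $\psi\in\IBr(G)$. The key structural fact is that since $\theta\in\dz(N)$, the block $\bl(\theta)$ has defect group $1$; I would first argue that $\psi$ lies in a block $B$ of $G$ covering $\bl(\theta)$ with defect group $D$ an $\ell$-subgroup of $G$ mapping isomorphically onto a Sylow $\ell$-subgroup of $G/N$ (this uses that $\bl(\theta)$ has trivial defect group together with standard block-covering theory). Then $\psi$ is the unique irreducible Brauer character in $B$ lying over $\theta^0$, and in fact $B$ contains a unique irreducible Brauer character, so $\psi(1)_\ell = \theta(1)_\ell \cdot |G/N|_\ell = |G/N|_\ell\,\theta(1)$ — wait, more carefully: $\psi$ extends $\theta^0$ so $\psi(1)=\theta(1)$, which already forces $|G/N|_\ell=1$ unless $\theta$ has a projective cover structure accounting for it; so in fact the hypothesis that $\theta^0$ extends already forces $\ell\nmid |G/N|$, OR $D$ centralizes... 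I would instead proceed via lifting. The cleanest route: since $\theta$ is $G$-invariant of defect zero, consider the character triple $(G,N,\theta)$; by the hypothesis on abelian Sylow $r$-subgroups for $r\ne\ell$ and the fact that the obstruction to extending $\theta$ lies in $H^2(G/N,\Lambda^\times)$, I would show the cohomological obstruction class $[\alpha]\in H^2(G/N,k^\times)$ for $\theta^0$ and the class $[\widetilde\alpha]\in H^2(G/N,\cO^\times)$ (or $K^\times$) for $\theta$ have the same order, using that reduction $\cO^\times\to k^\times$ induces an isomorphism on the prime-to-$\ell$ parts of $H^2$ and that the $\ell$-part of $H^2(G/N,-)$ is controlled by a Sylow $\ell$-subgroup which, being the only possibly non-abelian Sylow, still has its relevant cohomology matched because a defect-zero character's associated twisted group algebra is split over the $\ell$-part precisely when...

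Concretely the steps are: (1) reduce to showing the obstruction cocycle $\alpha$ for $\theta$ (valued in roots of unity in $\cO^\times$) is a coboundary iff its reduction $\bar\alpha$ mod $J(\cO)$ is; (2) observe $\alpha$ takes values in $\ell'$-roots of unity times $\ell$-power roots of unity, split $[\alpha]=[\alpha]_{\ell'}[\alpha]_\ell$ in $H^2(G/N,\mu)$; (3) for the $\ell'$-part, reduction mod $J(\cO)$ is injective on $\mu_{\ell'}$, so $[\bar\alpha]_{\ell'}=1 \iff [\alpha]_{\ell'}=1$; (4) for the $\ell$-part, use that $\theta$ has defect zero to conclude the $\ell$-part of the obstruction already vanishes for $\theta$ — this is where defect zero enters, via the structure of the source algebra / the fact that $\bl(\theta)$ being nilpotent of defect zero forces a trivial twist on the $\ell$-part — and symmetrically $[\bar\alpha]_\ell$ vanishes for $\theta^0$ for the same reason (the defect group of $\bl(\psi)\cap$ fibre over $\theta^0$ is trivial in the relevant sense); (5) combine: $\theta$ extends $\iff [\alpha]=1 \iff [\alpha]_{\ell'}=1 \iff [\bar\alpha]_{\ell'}=1 \iff [\bar\alpha]=1 \iff \theta^0$ extends.

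The main obstacle I anticipate is step (4): cleanly justifying that the $\ell$-part of the extension obstruction is invisible for a defect-zero character on both the ordinary and modular sides, and handling the role of the non-abelian Sylow $\ell$-subgroup of $G/N$ correctly. The hypothesis is carefully stated to allow $\ell$-Sylows to be non-abelian, so the argument must genuinely use defect zero (not just a cohomology-vanishing trick that would need abelian Sylows everywhere) — most likely via the observation that $\theta\in\dz(N)$ means $\theta$ lies in a block with trivial defect group, so by results on extensions of characters from defect-zero blocks the obstruction is an $\ell'$-element of $H^2$, and the analogous statement for $\theta^0\in\IBr(N)$ in a defect-zero block gives the same. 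Once that is in place, the reduction-mod-$J(\cO)$ isomorphism on $\ell'$-torsion in $H^2$ closes the argument in both directions simultaneously.
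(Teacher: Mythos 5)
Your proof takes a genuinely different route from the paper's. The paper reduces to the case where $G/N$ is an $r$-group via Isaacs's Corollary~11.31, handles $r=\ell$ by citing Navarro's Problem~(3.10), and handles $r\ne\ell$ by a counting argument that crucially uses the abelian hypothesis: since $\theta^0$ extends and $G/N$ is abelian, $|\IBr(G\mid\theta^0)|=|G/N|$, and the inequality $|\Irr(B)|\ge|\IBr(B)|$ for the union $B$ of blocks of $G$ over $\bl(\theta)$ then forces $|\Irr(G\mid\theta)|=|G/N|$, i.e.\ an extension. Your cohomological route instead splits the obstruction class $[\alpha]\in H^2(G/N,\cO^\times)$ into $\ell$- and $\ell'$-parts; if completed, this would in fact prove the lemma without the abelian hypothesis, since reduction modulo $J(\cO)$ induces an isomorphism from the prime-to-$\ell$ torsion of $H^2(G/N,\cO^\times)$ onto $H^2(G/N,k^\times)$ (the kernel $1+J(\cO)$ is pro-$\ell$, and $H^2(G/N,k^\times)$ is $\ell'$-torsion because $k^\times$ is $\ell$-divisible without $\ell$-torsion).

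The genuine gap, which you yourself flag, is step~(4): the claim that $[\alpha]_\ell=1$ whenever $\theta$ is a $G$-invariant defect-zero character. After restricting to a Sylow $\ell$-subgroup of $G/N$, this is equivalent to showing that $\theta$ extends to any $M$ with $N\le M\le G$ and $M/N$ an $\ell$-group; this is precisely the $\ell$-local core of the lemma, and it is what the paper's citation of Navarro's Problem~(3.10) accomplishes (together with the automatic extendibility of $\theta^0$ to such $M$, since $H^2(M/N,k^\times)=0$). Your appeal to ``the structure of the source algebra / nilpotent blocks'' is a gesture at a proof rather than a proof, and the argument does not close until it is supplied. Two secondary points: the paragraph preceding your step~(1)--(5) outline goes astray --- the inference that $\theta^0$ extending forces $\ell\nmid|G/N|$ is false, and the claim that the covering block $B$ has a unique irreducible Brauer character is also not generally true; you abandon that line, but the false statements should be removed. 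Finally, $[\bar\alpha]_\ell=1$ does not require defect zero: $H^2(G/N,k^\times)$ simply has no $\ell$-torsion, so defect zero is only needed on the ordinary side.
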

\begin{proof}
The only if direction is obvious and we only consider the if direction.
Suppose that $\theta^0$ extends to $G$ and we prove that $\theta$ extends to $G$.

We first prove for the case that $G/N$ is an $r$-group for some prime $r$.
If $r=\ell$, then this follows by \cite[Problem (3.10)]{Na98} directly.
Now we assume that $r\ne \ell$.
By the assumption, $G/N$ is abelian.
Since $\theta^0$ extends to $G$ we have
$|\IBr(G\mid \theta^0)|=|G/N|$ by \cite[Cor.~8.20]{Na98}.
Let $B$ be the union of blocks of $G$ covering $\bl(\theta)$.
Then $\IBr(B)=\IBr(G\mid \theta^0)$ and $\Irr(B)=\Irr(G\mid \theta)$.
Thus $|\Irr(G\mid \theta)|\ge |G/N|$.
Therefore, $\theta$ extends to $G$.

Now we prove the general case.
If a prime $r$ dividing $|G/N|$, we let $M_r$ be a subgroup of $G$ such that $N\le M_r$ and $M_r/N$ is a Sylow $r$-subgroup of $G/N$.
By the above arguments, $\theta$ extends to $M_r$ for every prime $r$ dividing $|G/N|$.
According to \cite[Cor.~11.31]{Is76}, $\theta$ extends to $G$.
\end{proof}

We prove the following lemma about the extendibility of induced characters.

\begin{lem}\label{lem:ext-indu}
	Let $M,N\unlhd G$, $M\le N$, $\vhi\in\Irr(M)$, $\theta\in\Irr(N)$ such that $\theta=\Ind^{N}_{M}(\vhi)$. 
	Then $\vhi$ extends to $G_\vhi$ if and only if $\theta$ extends to $G_\theta$.
	In addition, if $\tvhi\in\Irr(G_\vhi)$ is an extension of $\vhi$, then $\Ind^{G_\theta}_{G_\vhi}\tvhi$ is an extension of $\theta$.
\end{lem}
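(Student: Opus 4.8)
The plan is to reduce the statement to two structural facts about the two stabilizers, namely $G_\theta=NG_\vhi$ and $N\cap G_\vhi=M$, and then to treat the two implications separately, with the ``if'' direction also yielding the last assertion. For the structural facts: since $M$ and $N$ are normal in $G$, conjugation by $g\in G$ is an automorphism fixing both, so $\theta^g=\Ind^N_M(\vhi^g)$; and because $\theta=\Ind^N_M\vhi$ is irreducible, a Frobenius reciprocity computation of $\langle\Ind^N_M\vhi_1,\Ind^N_M\vhi_2\rangle$ shows that for $\vhi_1,\vhi_2\in\Irr(M)$ one has $\Ind^N_M\vhi_1=\Ind^N_M\vhi_2=\theta$ iff $\vhi_1$ and $\vhi_2$ are $N$-conjugate. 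Hence $g\in G_\theta$ iff $\vhi^g$ is $N$-conjugate to $\vhi$, which translates into $G_\theta=NG_\vhi$; moreover $N\cap G_\vhi$ is the inertia group of $\vhi$ in $N$ and $[N\cap G_\vhi:M]=\langle\Res^N_M\Ind^N_M\vhi,\vhi\rangle=\langle\theta,\theta\rangle=1$, so $N\cap G_\vhi=M$. Along the way record that $M\le G_\vhi$, $N\unlhd G_\theta$, and that $\vhi$ (resp.\ $\theta$) is invariant in $G_\vhi$ (resp.\ $G_\theta$) by definition.

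For the ``if'' direction and the final assertion, suppose $\tvhi\in\Irr(G_\vhi)$ extends $\vhi$. Since $G_\theta=NG_\vhi$ the double coset space $N\backslash G_\theta/G_\vhi$ is a single point, so the Mackey formula gives $\Res^{G_\theta}_N\Ind^{G_\theta}_{G_\vhi}\tvhi=\Ind^N_{N\cap G_\vhi}\Res^{G_\vhi}_{N\cap G_\vhi}\tvhi=\Ind^N_M\vhi=\theta$. As the restriction of $\Ind^{G_\theta}_{G_\vhi}\tvhi$ to the normal subgroup $N$ is the irreducible character $\theta$, the character $\Ind^{G_\theta}_{G_\vhi}\tvhi$ is itself irreducible, and it is then an extension of $\theta$; in particular $\theta$ extends to $G_\theta$.

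For the converse, suppose $\tilde\theta\in\Irr(G_\theta)$ extends $\theta$ and set $\psi:=\Res^{G_\theta}_{G_\vhi}\tilde\theta$. Transitivity of restriction gives $\Res^{G_\vhi}_M\psi=\Res^N_M\theta$, hence $\langle\Res^{G_\vhi}_M\psi,\vhi\rangle=\langle\theta,\Ind^N_M\vhi\rangle=1$. Write $\psi=\sum_\eta a_\eta\eta$ with $\eta$ ranging over $\Irr(G_\vhi)$. Using that $\vhi$ is $G_\vhi$-invariant with $M\unlhd G_\vhi$, Clifford's theorem forces $\Res^{G_\vhi}_M\eta$ to be a positive multiple of $\vhi$ whenever $\vhi$ is a constituent of it; thus $1=\sum_\eta a_\eta\langle\Res^{G_\vhi}_M\eta,\vhi\rangle$ is a sum of non-negative integers, so there is a unique $\eta_1$ with $a_{\eta_1}=1$ and $\langle\Res^{G_\vhi}_M\eta_1,\vhi\rangle=1$, i.e.\ $\Res^{G_\vhi}_M\eta_1=\vhi$. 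Hence $\eta_1$ extends $\vhi$ to $G_\vhi$.

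The routine parts are the two Frobenius-reciprocity identities and the single-double-coset Mackey computation. The one step that needs a little care is the converse: $\Res^{G_\theta}_{G_\vhi}\tilde\theta$ has no reason to be multiplicity-free as a character of $G_\vhi$, and it is only the constituent lying over $\vhi$ that we can control; it is precisely the rigidity of Clifford restriction to the $G_\vhi$-invariant character $\vhi$, combined with the multiplicity-one identity $\langle\Res^N_M\theta,\vhi\rangle=1$, that pins this constituent down as a genuine extension. (Alternatively one could note that $(G_\theta,N,\theta)$ and $(G_\vhi,M,\vhi)$ are isomorphic character triples and invoke invariance of extendibility under such isomorphisms, but the direct argument above has the advantage of producing the explicit extension $\Ind^{G_\theta}_{G_\vhi}\tvhi$ of $\theta$ required in the last assertion.)
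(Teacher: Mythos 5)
Your proof is correct and proves the same two structural facts as the paper ($G_\theta=NG_\vhi$ and $N\cap G_\vhi=M$), but where the paper then simply cites Isaacs \cite[Cor.~4.3]{Is84} to get that induction is a bijection $\Irr(G_\vhi\mid\vhi)\to\Irr(G_\theta\mid\theta)$ — from which both the equivalence of extendibility and the explicit extension $\Ind^{G_\theta}_{G_\vhi}\tvhi$ drop out — you reprove the needed piece of that result from scratch. Your ``if'' direction, via the one-double-coset Mackey computation $\Res^{G_\theta}_N\Ind^{G_\theta}_{G_\vhi}\tvhi=\Ind^N_M\vhi=\theta$, is really the proof that induction lands in $\Irr(G_\theta\mid\theta)$ and preserves extensions; your ``only if'' direction replaces the injectivity/surjectivity of the cited bijection by a direct Clifford-theoretic argument on $\Res^{G_\theta}_{G_\vhi}\tilde\theta$, which is the most delicate part and which you handle correctly (the multiplicity-one computation $\langle\Res^N_M\theta,\vhi\rangle=1$ combined with the fact that $\vhi$ being $G_\vhi$-invariant forces $\Res^{G_\vhi}_M\eta$ to be a pure multiple of $\vhi$ whenever $\vhi$ appears). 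The net effect is a self-contained argument that avoids the black-box reference at the cost of some extra work; both routes are sound, and yours has the minor advantage of making the extension in the last assertion manifestly explicit rather than implicit in the cited bijection.
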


\begin{proof}
	We first claim that $G_\theta=N G_{\vhi}$.
	Obviously $N G_{\vhi}\subseteq G_\theta$.
	If $g\in G_\theta$, then $\vhi^g\in\Irr(M\mid\theta)$.
	So $\vhi^g=\vhi^n$ for some $n\in N$, and then $gn^{-1}\in G_\vhi$.
	Therefore, $g\in N G_{\vhi}$ and we have proved the claim.
	Also note that $G_\vhi\cap N=N_\vhi=M$.
	So by \cite[Cor. 4.3]{Is84}, the induction defines a bijection between $\Irr(G_\vhi\mid \vhi)$ and $\Irr(G_\theta\mid\theta)$.
	Thus $\vhi$ extends to $G_\vhi$ if and only if $\theta$ extends to $G_\theta$.	
\end{proof}

The following two lemma are the modular versions of
\cite[Cor.~4.2 and 4.3]{Is84}.

\begin{lem}\label{lem:ext-res}
Let $G=NH$ be finite groups with $N\unlhd G$ and $H\le G$ and $M=N\cap H$.
Let $\theta\in\IBr(N)$ be invariant in $G$ and assume that $\vhi=\Res^N_M\theta$ is irreducible.
Then restriction defines a bijection from $\IBr(G\mid\theta)$ to $\IBr(H\mid\vhi)$.
\end{lem}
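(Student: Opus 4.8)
The plan is to realise this restriction map as the isomorphism of modular character triples furnished by Theorem~\ref{thm:triple-isomor}, applied to the relation $(G,N,\theta)\geqslant(H,M,\vhi)$ with the projective representation for $(H,M,\vhi)$ obtained simply by restricting the one for $(G,N,\theta)$. First I would record the elementary points. As $M=N\cap H\unlhd H$ and $\theta$ is $G$-invariant, for $h\in H$ we get $\vhi^h=(\Res^N_M\theta)^h=\Res^N_M(\theta^h)=\Res^N_M\theta=\vhi$, so $\vhi$ is $H$-invariant and $(H,M,\vhi)$ is a modular character triple with $H_\vhi=H$; also $\theta(1)=\vhi(1)$ since $\vhi=\Res^N_M\theta$ is irreducible. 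Now choose a projective representation $\cP\colon G\to\GL_{\theta(1)}(k)$ associated to $(G,N,\theta)$, with factor set $\alpha$, and set $\cP':=\cP|_H$. Then $\cP'|_M=\cP|_M$ affords $\Res^N_M\theta=\vhi$, and the relations $\cP'(mh)=\cP'(m)\cP'(h)$, $\cP'(hm)=\cP'(h)\cP'(m)$ for $m\in M\le N$ and $h\in H\le G$ are inherited from $\cP$; hence $\cP'$ is associated to $(H,M,\vhi)$ with factor set $\alpha|_{H\times H}$. Combined with $G=NH$ and $M=N\cap H$, this establishes $(G,N,\theta)\geqslant(H,M,\vhi)$ given by $(\cP,\cP')$.

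Applying Theorem~\ref{thm:triple-isomor} with $J=G$ then yields a well-defined bijection $\sigma_G\colon\BrCh(G\mid\theta)\to\BrCh(H\mid\vhi)$ belonging to a strong isomorphism of modular character triples, and in particular (by the defining properties of such isomorphisms, cf.\ \cite[\S8]{Na98}) it restricts to a bijection $\IBr(G\mid\theta)\to\IBr(H\mid\vhi)$. It remains to identify $\sigma_G$ with ordinary restriction of Brauer characters. Given $\psi\in\BrCh(G\mid\theta)$, write $\psi=\mathrm{trace}(\mathcal{Q}\otimes\cP)$ for a projective representation $\mathcal{Q}$ of $G/N$ whose factor set is inverse to that of $\cP$; inflating $\mathcal{Q}$ to $G$ we may take $\mathcal{Q}$ trivial on $N$, so $\mathcal{Q}|_H$ is trivial on $M$ and is identified with $\mathcal{Q}$ under the natural isomorphism $G/N\cong H/M$. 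By the definition of $\sigma_G$,
\[
\sigma_G(\psi)=\mathrm{trace}(\mathcal{Q}\otimes\cP')=\mathrm{trace}(\mathcal{Q}|_H\otimes\cP|_H)=\Res^G_H\,\mathrm{trace}(\mathcal{Q}\otimes\cP)=\Res^G_H\psi .
\]
Hence $\sigma_G=\Res^G_H$ on $\BrCh(G\mid\theta)$, so restriction yields the asserted bijection $\IBr(G\mid\theta)\to\IBr(H\mid\vhi)$; note that, since $\sigma_G$ sends irreducibles to irreducibles, this also proves that $\Res^G_H\psi$ really is irreducible for $\psi\in\IBr(G\mid\theta)$.

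The only genuinely delicate step is this last identification: one must line up the projective representation $\mathcal{Q}$ of $G/N$ appearing in the formula for $\sigma_G$ with its avatar $\mathcal{Q}|_H$ as a projective representation of $H/M$, and confirm that the factor sets are inverse to those of $\cP|_G$ and $\cP'$ respectively — this is exactly where the hypotheses $G=NH$ and $M=N\cap H$ (which make $G/N\cong H/M$) enter. The rest is formal. Alternatively, one may transcribe the proof of \cite[Cor.~4.2]{Is84} into the modular setting, substituting the modular character-triple theory of \cite[\S8]{Na98} for the ordinary one; the projective-representation argument above is the explicit form of that transcription.
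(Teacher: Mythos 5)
Your proof is correct and follows essentially the same route as the paper: take a projective representation $\cP$ associated to $(G,N,\theta)$, set $\cP'=\cP|_H$, verify that $(\cP,\cP')$ gives $(G,N,\theta)\geqslant(H,M,\vhi)$, and observe from the explicit formula in Theorem~\ref{thm:triple-isomor} that the resulting bijection $\sigma_G$ is literally $\Res^G_H$. The paper compresses the middle step into a citation of \cite[Prop.~2.3]{Sp18}; your write-up simply spells out what that reference provides, including the check that $\vhi$ is $H$-invariant and the identification of $\mathcal{Q}$ with $\mathcal{Q}|_H$ under $G/N\cong H/M$, so there is no substantive difference.
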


\begin{proof}
Let $\cP$ be a projective representation associated to $(G,N,\theta)$ and $\cP'=\cP|_H$.
Then similar as \cite[Prop.~2.3]{Sp18} for Brauer characers, one has that $(\cP,\cP')$ gives $(G,N,\theta)\geqslant(H,M,\vhi)$.
So these two modular character triples are strong isomorphic.
In addition, by the construction in Theorem \ref{thm:triple-isomor}, restriction defines a bijection from $\IBr(G\mid\theta)$ to $\IBr(H\mid\vhi)$.
\end{proof}

\begin{lem}\label{lem:ext-ind}
	Let $G=NH$ be finite groups with $N\unlhd G$ and $H\le G$ and $M=N\cap H$.
	Let $\vhi\in\IBr(M)$ be invariant in $H$ and assume that $\theta=\Ind^N_M\vhi$ is irreducible.
	Then induction defines a bijection from $\IBr(H\mid\vhi)$ to $\IBr(G\mid\theta)$.
\end{lem}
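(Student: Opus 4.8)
The plan is to mirror the proof of Lemma \ref{lem:ext-res} and exploit the fact that Lemma \ref{lem:ext-res} already supplies the restriction bijection, then use a Clifford-theoretic correspondence between characters lying over $\vhi$ and characters lying over $\theta$. First I would recall the standard observation that $G_\theta = NH$: since $\theta = \Ind^N_M\vhi$ is $H$-invariant by hypothesis, $G = NH$ already stabilizes $\theta$, and $H_\vhi = H$. In particular $\vhi$ is a single $H$-orbit under the conjugation action, and the induced character $\theta$ is irreducible and $G$-invariant; moreover $\Res^N_M\theta = \Ind^N_M\Res$-type formula is not available in general, so I would instead pass to a projective representation $\cP$ associated to $(G,N,\theta)$ and build an associated projective representation of $H$ with a \emph{compatible} factor set, exactly as in the proof of Lemma \ref{lem:ext-res}.

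The key steps, in order: (1) Observe $G = NH$, $M = N\cap H$, $\vhi$ is $H$-invariant, $\theta = \Ind^N_M\vhi$ is irreducible and automatically $G$-invariant. (2) Choose a projective representation $\cP'$ associated to $(H,M,\vhi)$ with factor set $\alpha'$; since $\theta$ is induced from $\vhi$, one can build a projective representation $\cP$ associated to $(G,N,\theta)$ whose factor set $\alpha$ restricts on $H\times H$ to $\alpha'$ — here the point is that the same cohomology class governs the obstruction, because $G/N \cong H/M$ and the Clifford theory of the induced character $\theta$ is ``the same'' as that of $\vhi$ (this is the modular analogue of \cite[Cor.~4.3]{Is84}, and is dual to the situation handled in Lemma \ref{lem:ext-res}). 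This gives $(G,N,\theta)\geqslant (H,M,\vhi)$ via $(\cP,\cP')$. (3) Apply Theorem \ref{thm:triple-isomor}: for $J = G$ the map $\sigma_G:\BrCh(G\mid\theta)\to\BrCh(H\mid\vhi)$ is a well-defined bijection restricting to a bijection $\IBr(G\mid\theta)\to\IBr(H\mid\vhi)$. (4) Identify this bijection with induction: by the explicit formula in Theorem \ref{thm:triple-isomor}, $\sigma_G$ sends $\mathrm{trace}(\mathcal Q\otimes\cP|_G)$ to $\mathrm{trace}(\mathcal Q\otimes\cP'|_H)$; since $\cP = \Ind$ of $\cP'$ (up to the projective twist), one checks that the image of $\psi\in\IBr(H\mid\vhi)$ under $\sigma_G^{-1}$ is $\Ind^G_H\psi$. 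Concretely, $\Ind^G_H$ maps $\IBr(H\mid\vhi)$ into $\IBr(G\mid\theta)$ because $\Ind^G_H\Ind^H_M = \Ind^G_M = \Ind^G_N\Ind^N_M = \Ind^G_N\theta$-type reasoning shows each such induced character lies over $\theta$; then a count using Clifford theory (both sides are parametrized by $\IBr(G/N\cong H/M)$ twisted by the appropriate factor set) forces $\Ind^G_H$ to be the bijection.

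Alternatively, and perhaps more cleanly, I would deduce the statement \emph{directly from} Lemma \ref{lem:ext-res} by a transpose/duality argument: Lemma \ref{lem:ext-res} says restriction is a bijection $\IBr(G\mid\theta)\to\IBr(H\mid\vhi)$ when $\vhi = \Res^N_M\theta$ is irreducible; one shows that under the hypothesis of Lemma \ref{lem:ext-ind} the map $\Ind^G_H$ is inverse (or adjoint) to a suitable restriction map, using Frobenius reciprocity together with the fact that $\Res^G_H\Ind^G_N\theta$ and $\Ind^H_M\vhi$ are related by Mackey's formula, which here degenerates because $G = NH$. The main obstacle I anticipate is step (2): constructing the projective representation $\cP$ of $G$ extending the Clifford theory of $\theta$ with a factor set compatible with $\cP'$. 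This requires knowing that the cohomological obstruction (an element of $H^2(G/N,k^\times)$) for $\theta$ agrees with that for $\vhi$ under the identification $G/N\cong H/M$; this is precisely the modular analogue of the ordinary statement in \cite[Cor.~4.3]{Is84}, and while it is surely routine — one transports the projective representation along induction — writing it down carefully for Brauer characters (where one must work over $k$ and cannot invoke character orthogonality) is the delicate point. Everything else is bookkeeping with Theorem \ref{thm:triple-isomor} and standard Clifford theory of induced characters, e.g.\ \cite[Cor.~4.3]{Is84} and its modular counterpart.
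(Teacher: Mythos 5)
Your proposal is correct and follows essentially the same route as the paper: the paper's proof constructs the induced projective representation $\cP$ of $G$ from a projective representation $\cP'$ associated to $(H,M,\vhi)$ via the explicit block-matrix formula (as in \cite[p.~712]{NS14}), checks that the factor sets coincide under $G/N\cong H/M$, and then reads off from Theorem \ref{thm:triple-isomor} that the resulting bijection is induction. The step you flag as delicate --- producing $\cP$ with a compatible factor set --- is exactly what you describe (transporting $\cP'$ along induction), and it is handled in the paper by that direct computation.
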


\begin{proof}
Let $\cP'$ be a projective representation associated to $(H,M,\vhi)$.
We can define a projective representation $\cP$  of $G$, induced from the projective representation $\cP'$, similar as in \cite[p.~712]{NS14} for characteristic zero.
Let $n_1,\ldots, n_s\in N$ be representatives of the $M$-cosets in $N$ and
define the maps $\cP_{i,j}$ on $G$ by: $\cP_{i,j}(x)=\cP'(n_i^{-1}xn_j)$ if $n_i^{-1}xn_j\in H$, while $\cP_{i,j}(x)=0$ otherwise.
Then $\cP:G\to\GL_{\theta(1)}(k)$ is defined by
$$\cP(x)=
\left (
\begin{matrix}
\cP_{1,1}(x) & \cdots & \cP_{1,s}(x) \\
\vdots &  & \vdots \\
\cP_{s,1}(x) & \cdots & \cP_{s,s}(x)
\end{matrix}
\right  ).
$$ 
Straightforward computations show that  $\cP$ is a projective representation of $G$ associated to $\theta$ and
 the factor sets of $\cP$ and $\cP'$ coincide via the natural isomorphism $G/N\cong H/M$.
Then $(\cP,\cP')$ gives $(G,N,\theta)\geqslant(H,M,\vhi)$.
Therefore, by the construction in Theorem \ref{thm:triple-isomor},  induction defines a bijection from $\IBr(H\mid\vhi)$ to $\IBr(G\mid\theta)$.
\end{proof}

\begin{rmk}
\begin{enumerate}[\rm(i)]
	\item The proofs in Lemma \ref{lem:ext-res} and \ref{lem:ext-ind} also apply for characteristic zero situation.
	In this way, we have given an alternative proof for \cite[Cor.~4.2 and 4.3]{Is84}.
	\item Lemma \ref{lem:ext-indu} also holds if all ordinary characters are changed to be Brauer charaters.
	For the proof, just changing \cite[Cor. 4.3]{Is84} to be Lemma  \ref{lem:ext-ind}.
\end{enumerate}
\end{rmk}

The following is the modular version of \cite[Lemma 7.3]{FLZ20a}.

\begin{lem}\label{lem:ext-char}
	Suppose that $G=NH$ is a finite group and $N\unlhd G$, $H\le  G$ such that $M=N\cap H\unlhd N$. 
	Let $\chi\in\IBr(N)$, $\xi\in\IBr(M\mid \chi)$ and $\phi\in\IBr(N_\xi\mid\xi)$  such that $\chi=\Ind_{N_\xi}^{N}\phi$.
	Assume further that $H_\chi\le H_\xi$ and $\xi=\Res^{N_\xi}_{M}\phi$.
	Then 
	\begin{enumerate}[\rm(i)]
		\item $H_\chi=H_\phi\le H_\xi$,
		\item The following are equivalent.
		\begin{itemize}
			\item $\chi$ extends to $G_\chi$,
			\item $\xi$ extends to $H_\chi$,
			\item $\phi$ extends to $H_\chi N_\xi$.
		\end{itemize}
	\end{enumerate}
\end{lem}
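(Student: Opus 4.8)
The statement is the modular analogue of \cite[Lemma 7.3]{FLZ20a}, so the plan is to follow that argument line by line, replacing each invocation of an ordinary-character result by the corresponding Brauer-character result established above. Part (i) is a stabilizer computation; part (ii) is an ``extendibility transports along Clifford correspondence and restriction'' statement.

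For part (i), first I would show $H_\chi\le H_\phi$. Take $h\in H_\chi$; then $h$ fixes $\chi$, hence permutes the set $\IBr(N_\xi\mid\xi)$ of Clifford correspondents only through its action on the pair $(N_\xi,\xi)$, and since $h\in H_\chi\le H_\xi$ by hypothesis, $h$ normalizes $N_\xi$ and fixes $\xi$; thus $\phi^h\in\IBr(N_\xi^h\mid\xi)=\IBr(N_\xi\mid\xi)$ with $\Ind_{N_\xi}^N\phi^h=\chi^h=\chi$, so by the uniqueness in the Clifford correspondence $\phi^h=\phi$. The reverse inclusion $H_\phi\le H_\chi$ is immediate since $\chi=\Ind_{N_\xi}^N\phi$ is determined by $\phi$. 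This gives $H_\chi=H_\phi\le H_\xi$.

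For part (ii), I would prove the three implications cyclically. ``$\phi$ extends to $H_\chi N_\xi$ $\Rightarrow$ $\chi$ extends to $G_\chi$'': note $G_\chi=NH_\chi$ (using $G=NH$ and the claim $G_\chi=N G_\phi$ proved exactly as in Lemma \ref{lem:ext-indu}, combined with $G_\phi\cap N=N_\xi$ and $H_\phi=H_\chi$), and $G_\chi\cap N_\xi\,H_\chi=N_\xi H_\chi$ with $N_\xi H_\chi\unlhd G_\chi$ having quotient isomorphic to $N/N_\xi$; since $\chi=\Ind_{N_\xi}^N\phi=\Ind_{N_\xi H_\chi}^{G_\chi}(\text{ext of }\phi)\!\restriction$, apply the modular version of Lemma \ref{lem:ext-indu} (as recorded in the Remark following it) to pass the extension from $N_\xi H_\chi$ up to $G_\chi$. ``$\chi$ extends to $G_\chi$ $\Rightarrow$ $\xi$ extends to $H_\chi$'': here I would use $G=NH$, $M=N\cap H\unlhd N$, and the hypothesis $\xi=\Res^{N_\xi}_M\phi$ together with $\chi=\Ind_{N_\xi}^N\phi$; restricting a hypothetical extension $\widetilde\chi\in\IBr(G_\chi)$ of $\chi$ along $H_\chi\le G_\chi$ and applying Lemma \ref{lem:ext-res} (with the roles $G_\chi=N_\xi\cdot H_\chi$-type factorization — more precisely, $\chi$ is induced so one first descends to $N_\xi H_\chi$ via Lemma \ref{lem:ext-ind} applied in reverse, then uses $\xi=\Res\phi$) yields an extension of $\xi$ to $H_\chi$. ``$\xi$ extends to $H_\chi$ $\Rightarrow$ $\phi$ extends to $H_\chi N_\xi$'': since $\xi=\Res^{N_\xi}_M\phi$ is irreducible and $N_\xi H_\chi = N_\xi\cdot(N_\xi H_\chi\cap H)$ with $N_\xi\cap H_\chi=M_\chi$... — apply Lemma \ref{lem:ext-res} in the form: an extension of $\xi$ to $H_\chi$ and $\phi\in\IBr(N_\xi\mid\xi)$ with $\Res^{N_\xi}_M\phi=\xi$ combine to give an extension of $\phi$ to $N_\xi H_\chi$ (this is the modular analogue of Gallagher-type gluing, using that $\phi$ is already an extension of $\xi$ from $M$ to $N_\xi$ and tensoring with characters of $N_\xi H_\chi/N_\xi\cong H_\chi/M_\chi$).

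**Main obstacle.** The delicate point is not part (i), which is a routine Clifford-correspondence argument, but keeping the group-theoretic bookkeeping in part (ii) correct — in particular verifying $G_\chi=NH_\chi$, $N_\xi\unlhd N_\xi H_\chi$, and that the relevant quotients match up so that Lemmas \ref{lem:ext-res}, \ref{lem:ext-ind}, \ref{lem:ext-indu} apply with the stated hypotheses ($N\unlhd G$, the inducing relation irreducible). The hypothesis $\xi=\Res^{N_\xi}_M\phi$ is exactly what makes $\phi$ simultaneously a Clifford correspondent of $\chi$ over $\xi$ \emph{and} an extension of $\xi$, and threading both roles through the three implications without circularity is the part that needs care. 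I expect that, just as in \cite[Lemma 7.3]{FLZ20a}, once the subgroup structure is pinned down the character-theoretic steps are formal.
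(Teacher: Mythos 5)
Your proposal is correct and takes exactly the approach the paper uses: the authors' proof consists precisely of the instruction to transcribe the argument of \cite[Lemma 7.3]{FLZ20a} with the ordinary-character results of Isaacs replaced by the modular Lemmas \ref{lem:ext-res} and \ref{lem:ext-ind} (and the Remark giving the modular form of Lemma \ref{lem:ext-indu}), which is what you do. One small simplification worth noting is that $M\le H_\chi$ automatically (elements of $M\le N$ fix $\chi\in\IBr(N)$ by inner automorphisms), so $N_\xi\cap H_\chi=M$ exactly and all of the subgroup bookkeeping in part (ii) reduces to the clean factorization $G_\chi=NH_\chi$, $N\cap N_\xi H_\chi=N_\xi$, $N_\xi\cap H_\chi=M$, with $\xi=\Res^{N_\xi}_M\phi$ irreducible feeding directly into the hypotheses of Lemma \ref{lem:ext-res}.
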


\begin{proof}
This follows from a similar argument as in the proof of  \cite[Lemma 7.3]{FLZ20a} by replacing \cite[Cor.~4.2 and 4.3]{Is84}  there by Lemma \ref{lem:ext-res} and \ref{lem:ext-ind}.
\end{proof}

\subsection{Covering of weights}

We recall the Dade--Glauberman--Nagao correspondence from \cite{NS14}.
Let $N\unlhd M$ be finite groups such that $M/N$ is an $\ell$-group and $D_0$  a normal $\ell$-subgroup of $N$ contained in $Z(M)$.
Suppose that $b$ is an $M$-invariant block of $N$ with defect group $D_0$.
Let $B$ be the (unique) block of $M$ covering $b$ and $D$ be a defect group of $B$.
Let $B'$ be the Brauer correspondent of $B$.
Denote $L:=\N_N(D)$ and let $b'$ be the unique block of $L$ covered by $B'$.
Then by \cite[Thm.~5.2]{NS14}, there is a natural bijection $\pi_D:\Irr_D(b)\to \Irr_D(b')$, which is
called the \emph{Dade--Glauberman--Nagao (DGN) correspondence}.
Here, $\Irr_D(b)$ denotes the set of $D$-invariant irreducible characters in $b$.

Let $G\unlhd \tG$.
Now we recall the relationship ``covering" for weights between $\tG$ and $G$ defined in \cite{BS20}.
Let $(Q,\varphi)$ be a weight of $G$ and we set $N=\N_G(Q)$.
Let $M$ be a subgroup of $\N_{\tG}(Q)_\varphi$ such that $N\le M$ and $M/N$ is an $\ell$-group.
We fix a defect group $\tQ/Q$ of the unique block of $M/Q$ which covers $\bl_{N/Q}(\vhi)$.
Then $\tQ\cap N=Q$, $M=N\tQ$ and 
$\N_{M/Q}(\tQ/Q)=(\tQ/Q)\ti \C_{N/Q}(\tQ/Q)$.
Note that $\pi_{\tQ/Q}(\varphi)\in\dz(\C_{N/Q}(\tQ/Q))$
and denote by $\bar\pi_{\tQ/Q}(\varphi)$ the associated character in $\Irr(\N_{M/Q}(\tQ/Q)/(\tQ/Q))$ which lifts to $\pi_{\tQ/Q}(\varphi)\times 1_{\tQ/Q}\in\Irr(\N_{M/Q}(\tQ/Q))$.
Also note that  $\N_{M/R}(\tQ/Q)/(\tQ/Q)\cong \N_G(\tQ)\tQ/\tQ$ is normal in $\N_{\tG}(\tQ)/\tQ$.
Following Brough--Sp\"ath \cite{BS20}, a weight $(\tQ,\tvhi)$ of $\tG$ with $\tvhi\in\dz(\N_{\tG}(\tQ)/\tQ\mid \bar\pi_{\tQ/Q}(\varphi))$ is said to \emph{cover} $(R,\varphi)$.
In addition,
there are some Clifford-like properties for weights between $\tG$ and $G$;
see \cite[\S 2]{BS20}.

For later use, we prove the following lemma.

\begin{lem}\label{lem:cover-weight}
	Suppose that $G, \hG \unlhd \tG$ are finite groups with $G\unlhd \hG$.
	Let $(Q,\vhi)$,  $(\hQ,\hvhi)$, $(\tQ,\tvhi)$  be weights of $G$, $\hG$, $\tG$ respectively.
	If $(\tQ,\tvhi)$ covers $(\hQ,\hvhi)$ and $(\hQ,\hvhi)$ covers $(Q,\vhi)$, then $(\tQ,\tvhi)$ covers $(Q,\vhi)$.
\end{lem}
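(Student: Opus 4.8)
The plan is to unwind the definition of ``covers'' for weights given just above the statement and chase the Dade--Glauberman--Nagao correspondences through the tower $G \unlhd \hG \unlhd \tG$. First I would set up notation: let $N = \N_G(Q)$, $\hN = \N_{\hG}(\hQ)$, $\tN = \N_{\tG}(\tQ)$ and choose the intermediate subgroups $\hM \le \N_{\hG}(Q)_{\vhi}$ with $N \le \hM$ and $\hM/N$ an $\ell$-group witnessing that $(\hQ,\hvhi)$ covers $(Q,\vhi)$, and likewise $\tM \le \N_{\tG}(\hQ)_{\hvhi}$ witnessing that $(\tQ,\tvhi)$ covers $(\hQ,\hvhi)$. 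The key technical fact I expect to need is a transitivity property of the DGN correspondence itself: if one passes from $b$ (a block of $N$ with central defect) up to $\hM/Q$ and takes the defect group $\hQ/Q$, then up from there to $\tM/\hQ$ and takes the defect group $\tQ/\hQ$, the composite $\pi_{\tQ/\hQ}\circ\pi_{\hQ/Q}$ should agree with the single-step DGN correspondence $\pi_{\tQ/Q}$ associated to passing directly from $b$ to the relevant block of $\tM/Q$ with defect group $\tQ/Q$. This is essentially a compatibility of the DGN bijection with a chain of $\ell$-subgroups, and it is the analogue for Brauer-character-theoretic correspondences of the familiar transitivity of Glauberman--Nagao type correspondences.

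Concretely, the steps would be: (1) observe $Q \le \hQ \le \tQ$ (this follows because each $\tQ/Q$-type group is obtained as a defect group containing the previous one, using $\hQ \cap N = Q$ etc.); (2) record that $\hM = N\hQ$, $\tM = \hM\tQ = N\tQ$, and that the relevant semidirect product decompositions of $\N_{\hM/Q}(\hQ/Q)$ and $\N_{\tM/\hQ}(\tQ/\hQ)$ hold, so that the characters $\bar\pi_{\hQ/Q}(\vhi)$ and $\bar\pi_{\tQ/\hQ}(\hvhi)$ are defined; (3) prove the transitivity $\pi_{\tQ/Q}(\vhi) = \pi_{\tQ/\hQ}(\pi_{\hQ/Q}(\vhi))$ at the level of $\C_{N/Q}(\tQ/Q)$-characters, carefully matching up the two ways of organizing the defect groups (a direct passage $b \rightsquigarrow$ block of $N\tQ/Q$ with defect $\tQ/Q$ versus the two-step passage); (4) translate this equality of DGN correspondents into the statement $\tvhi \in \dz(\N_{\tG}(\tQ)/\tQ \mid \bar\pi_{\tQ/Q}(\vhi))$, which is exactly the definition of $(\tQ,\tvhi)$ covering $(Q,\vhi)$.

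The main obstacle will be step (3): one must check that the $\ell$-subgroup chain $Q \le \hQ \le \tQ$ interacts correctly with the DGN correspondence. The subtlety is that $\hQ/Q$ is chosen as a defect group of a block of $\hM/Q$ and $\tQ/\hQ$ as a defect group of a block of $\tM/\hQ$, whereas the direct correspondence $\pi_{\tQ/Q}$ wants $\tQ/Q$ to be a defect group of a block of $N\tQ/Q$; one has to verify these are consistent and that the Brauer correspondents line up along the tower. I would handle this by invoking the defining property of the DGN correspondence from \cite[Thm.~5.2]{NS14} together with the transitivity of ordinary block covering and Brauer correspondence, and by using that $\N_{\hM/Q}(\hQ/Q)/(\hQ/Q) \cong \N_G(\hQ)\hQ/\hQ$ is normal in $\N_{\tG}(\hQ)/\hQ$ so that the second DGN step is applied in a compatible ambient group. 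Once the transitivity of $\pi$ is in hand, the rest is a bookkeeping matter of matching the lifted characters $\bar\pi$.
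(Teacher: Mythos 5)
Your overall strategy --- unwind the definition of covering and chase the DGN correspondences along the tower $Q\le\hQ\le\tQ$ --- is the same as the paper's, but the key technical claim is misidentified, and the misidentification hides exactly the point where the work lies. First, a bookkeeping error: the group witnessing that $(\tQ,\tvhi)$ covers $(\hQ,\hvhi)$ is $\tM=\N_{\hG}(\hQ)\tQ$, not $\hM\tQ=\N_G(Q)\tQ$; the latter is the group relevant to the \emph{direct} covering of $(Q,\vhi)$ by $(\tQ,\tvhi)$ (the paper's $\tM_0$), and these are genuinely different. As a consequence the composite $\pi_{\tQ/\hQ}\circ\pi_{\hQ/Q}$ you propose to compare with $\pi_{\tQ/Q}$ does not typecheck: $\pi_{\hQ/Q}(\vhi)$ is a character of $\N_G(\hQ)\hQ/\hQ$, whereas the map $\pi_{\tQ/\hQ}$ occurring in the second covering is applied to $\hvhi$, a character of the strictly larger group $\N_{\hG}(\hQ)/\hQ$.

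More importantly, even a correctly formulated transitivity statement for the single character $\vhi$ would not prove the lemma. The hypotheses give $\hvhi\in\Irr\bigl(\N_{\hG}(\hQ)/\hQ\mid\bar\pi_{\hQ/Q}(\vhi)\bigr)$ and $\tvhi\in\dz\bigl(\N_{\tG}(\tQ)/\tQ\mid\bar\pi_{\tQ/\hQ}(\hvhi)\bigr)$, so what must be shown is that $\pi_{\tQ/\hQ}(\hvhi)$ lies over $\pi_{\tQ/Q}(\vhi)$ --- a Clifford-theoretic compatibility of the DGN correspondence with ``lying over'' along the normal subgroup $\N_G(\hQ)\hQ\unlhd\N_{\hG}(\hQ)$, quantified over \emph{all} constituents $\hvhi$ above $\pi_{\hQ/Q}(\vhi)$, not an equality of two correspondents of $\vhi$. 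This is where the paper's proof does essentially all of its work: it introduces the block $\widetilde B_0$ of $\N_G(Q)\tQ/Q$ covering $\bl(\vhi)$, its Brauer correspondent $\widetilde B'_0$, and the Brauer correspondent $\widetilde B'$ of the block $\widetilde B$ of $\N_{\hG}(\hQ)\tQ/\hQ$ covering $\bl(\hvhi)$, and then proves, using \cite[Lemma~2.3]{KS15} and \cite[Prop.~2.4]{Mu98} on the compatibility of Brauer correspondence with covering and with domination, that $\widetilde B'$ covers the dominated image $\overline{\widetilde B'_0}$; this block-level covering is what yields the required lying-over statement for $\tvhi$. Your plan subsumes all of this under ``bookkeeping'' and ``the Brauer correspondents line up along the tower''; as written, the proposal does not contain the argument that actually closes this gap.
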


\begin{proof}
	Let $N=\N_G(Q)$, $\hM=N\hQ$ and let $B$ be the unique block of $\hM/Q$ covering $\bl_{N/Q}(\vhi)$ and $B'$ the Brauer correspondent of $B$.
	Then $\hQ/Q$ is a defect group of the block $B$ (and $B'$).
	As above, $\bl_{\N_{N}(\hQ)/Q}(\pi_{\hQ/Q}(\vhi))$ is the unique block of $\N_{N}(\hQ)/Q$ covered by $B'$ and we denote $\vhi_0:=\pi_{\hQ/Q}(\vhi)$.
	Since $$\N_{N}(\hQ)/Q\cong \N_{G}(\hQ)\hQ/\hQ\cong \N_{\hM/Q}(\hQ/Q)/(\hQ/Q),$$ one can view $\vhi_0$ as a character of $\N_{\hM/Q}(\hQ/Q)/(\hQ/Q)$, $\N_{G}(\hQ)\hQ/\hQ$ or $\N_{G}(\hQ)$, or a character in $B'$.
	Then $(\hQ,\hvhi)$ covers $(Q,\vhi)$ implies $\hvhi\in\Irr(\N_{\hG}(\hQ)\mid \vhi_0)$ (here $\vhi_0$ is viewed as a character of $\N_{G}(\hQ)\hQ$ which contains $\hQ$ in its kernel).
	
	Let $\hN=\N_{\hG}(\hQ)$, $\tM=\hN \tQ$ and let $\widetilde B$ be the unique block of $\tM/\hQ$ covering $\bl_{\hN/\hQ}(\hvhi)$ and $\widetilde B'$ the Brauer correspondent of $\widetilde B$. 
	Let $\hvhi_0=\pi_{\tQ/\hQ}(\hvhi)$. Then $\bl_{\N_{\hN}(\tQ)/\hQ}(\pi_{\tQ/\hQ}(\hvhi))$ is the unique block of $\N_{\hN}(\tQ)/\hQ$ covered by $\widetilde B'$.
	As above, we view $\hvhi_0$ as a character of $\N_{\tM/\hQ}(\tQ/\hQ)/(\tQ/\hQ)$
	and $\N_{\hG}(\tQ)$ (also, a character in $\widetilde B'$) and then $\tvhi\in\Irr(\N_{\tG}(\tQ)\mid \hvhi_0)$.
	
	Now let $\tM_0=N\tQ$ and $\widetilde B_0$ be the unique block of $\tM_0/Q$ covering $\bl_{N/Q}(\vhi)$.
	Then $\widetilde B_0$ covers $B$. Let $\widehat B_0$ be the unique block of $\N_{\tM_0/Q}(\hQ/Q)\cong \N_{\tM_0}(\hQ)/Q$ covering $B'$. 
	Then by \cite[Lemma~2.3]{KS15}, $\widehat B_0^{\tM_0/Q}=\widetilde B_0$. 
	Also, according to $\hvhi\in\Irr(\N_{\hG}(\hQ)\mid \vhi_0)$, one has  the unique block of $\N_{\tM_0}(\hQ)/\hQ$ dominated by $\widehat B_0$ is covered by $\tB$.
	
	Let $\widetilde B'_0$ be the Brauer correspondent of $\widetilde B_0$. Then 
	$(\widetilde B'_0)^{\N_{\tM_0}(\hQ)/Q}=\widehat B_0$. 
	Let $\overline{\widetilde B'_0}$, $\overline{\widehat B_0}$ be the unique blocks of $\N_{\tM_0}(\tQ)/\hQ$, $\N_{\tM_0}(\hQ)/\hQ$ dominated by $\widetilde B'_0$, $\widehat B_0$ respectively.
	Then $(\overline{\widetilde B'_0})^{\N_{\tM_0}(\hQ)/\hQ}=\overline{\widehat B_0}$ by \cite[Prop.~2.4]{Mu98} (see also \cite[Lemma~2.4]{Sp13}). 
	Now we show that $\widetilde B'$ covers $\overline{\widetilde B'_0}$.
	In fact, if $b$ is a block of $\N_{\tM_0}(\tQ)/\hQ$ covered by $\widetilde B'$, then $b^{\N_{\tM_0}(\hQ)/\hQ}$ is covered by $\tB$.
	So every block of $\N_{\tM_0}(\tQ)/\hQ$ covered by $\widetilde B'$ has the form $b^g$ with  $g\in \N_{\hG}(\tQ)/\hQ$ and all blocks $(b^g)^{\N_{\tM_0}(\hQ)/\hQ}$ are covered by $\tB$.
	So $\widetilde B'$ covers $\overline{\widetilde B'_0}$, which implies $\tvhi$ lies over $\pi_{\tQ/Q}(\vhi)$ and thus $(\tQ,\tvhi)$ covers $(Q,\vhi)$.
\end{proof}

Let $(Q,\vhi)\in\Alp^0(G)$.
Then we denote by $\Alp(\tG\mid\overline{(Q,\vhi)})$  the set of conjugacy classes of weights of $\tG$ covering $(Q,\vhi)$. 

\subsection{Modular character triples and weights}

\begin{thm}\label{thm:bij-wei-norm}
	Let $G \unlhd \tG$ with abelian quotient $\tG/G$, $(Q,\vhi)\in\Alp^0(G)$ and $(\tQ,\tvhi)\in\Alp^0(\tG)$ such that $\vhi$ extends to its stabilizer in $\N_{\tG}(Q)$ and $(\tQ,\tvhi)$ covers $(Q,\vhi)$.
	Let $M=\tQ \N_G(Q)$ and $\vhi_0=\pi_{\tQ/Q}(\vhi)\in \dz(\N_{G}(\tQ)/Q)$ be the DGN-correspondent.
	\begin{enumerate}[\rm(i)]
		\item There exists an $\N_{\tG}(Q)_\vhi$-invariant extension $\hvhi$ of $\vhi$ to $M/Q$ and a $\Lin(\N_{\tG}(Q)
		/M)$-equivariant bijection 
		$$\Delta_\vhi:\rdz(\N_{\tG}(Q)\mid \hvhi)\to \dz(\N_{\tG}(\tQ)/\tQ\mid \vhi_0)$$ such that $\bl_{\N_{\tG}(Q)}(\chi)=\bl_{\N_{\tG}(\tQ)}(\Delta_{\vhi}(\chi))^{\N_{\tG}(Q)}$ for every $\chi\in \rdz(\N_{\tG}(Q)\mid \hvhi)$, where $\vhi_0$ is regarded as a character of $\N_{G}(\tQ)\tQ/\tQ$.
		\item There exists an $\IBr(\N_{\tG}(Q)/\N_G(Q))$-equivariant bijection
		$$\Xi_\vhi:\IBr(\N_{\tG}(Q)\mid \vhi^0)\to \IBr(\N_{\tG}(\tQ)\mid (\vhi_0)^0)$$
		such that $\bl_{\N_{\tG}(Q)}(\psi)=\bl_{\N_{\tG}(\tQ)}(\Xi_{\vhi}(\psi))^{\N_{\tG}(Q)}$ for every $\chi\in \IBr(\N_{\tG}(Q)\mid \vhi^0)$ and the following diagram commutes:
				\begin{align*}
		\xymatrix{
			&\rdz(\N_{\tG}(Q)\mid \hvhi) \ar[r]^{\Delta_\vhi\ }\ar[d]_{d_{\N_{\tG}(Q)}} & \dz(\N_{\tG}(\tQ)/\tQ\mid \vhi_0)\ar[d]_{d_{\N_{\tG}(\tQ)}}\\
			&\IBr(\N_{\tG}(Q)\mid \vhi^0)\ar[r]^{\Xi_\vhi\ } & \IBr(\N_{\tG}(\tQ)\mid (\vhi_0)^0)}
		\end{align*}
		where $d_*$ sends $\chi$ to $\chi^0$ is bijective.
	\end{enumerate}
\end{thm}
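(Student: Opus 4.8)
Throughout write $N:=\N_G(Q)$, $\hN:=\N_{\tG}(Q)$ and $\tN:=\N_{\tG}(\tQ)$. From the definition of the covering relation recall that $\tQ\cap N=Q$, that $M=N\tQ\le\hN_\vhi$, that $\tQ/Q$ is a defect group of the unique block $B$ of $M/Q$ covering $\bl_{N/Q}(\vhi)$, and that $\N_{M/Q}(\tQ/Q)=(\tQ/Q)\ti\C_{N/Q}(\tQ/Q)$ with $\C_{N/Q}(\tQ/Q)\cong\N_G(\tQ)\tQ/\tQ\unlhd\tN/\tQ$; under the last isomorphism $\vhi_0=\pi_{\tQ/Q}(\vhi)$ is viewed in $\dz(\N_G(\tQ)/Q)$. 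Since $\hN/N$ is abelian, every subgroup between $N$ and $\hN$ is normal in $\hN_\vhi$; in particular $M\unlhd\hN_\vhi$ and $\hN/M$ is abelian. As $\vhi$ extends to $\hN_\vhi$ by hypothesis, the restriction to $M$ of a fixed such extension is an $\hN_\vhi$-invariant extension $\hvhi$ of $\vhi$ to $M/Q$; since $\hvhi$ lies over $\vhi$ we have $\bl_{M/Q}(\hvhi)=B$, and a degree count shows $\hvhi$ has height zero in $B$.

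For part~(i) the plan is to build $\Delta_\vhi$ as a composition of three bijections. First, a Clifford-theoretic step on the $Q$-side: induction identifies $\Irr(\hN\mid\hvhi)$ with $\Irr(\hN_\vhi\mid\hvhi)$, and Gallagher's theorem parametrises the latter by $\Lin(\hN_\vhi/M)$, using that $\hvhi$ extends to $\hN_\vhi$ (it is the restriction of the given extension of $\vhi$; cf.\ Lemma~\ref{lem:ext-indu}); inside this picture $\rdz(\hN\mid\hvhi)$ is carved out as the locus where the defect group of the ambient block of $\hN$ remains equal to $\tQ/Q$, i.e.\ where no further $\ell$-ramification occurs beyond $M$. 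Second, the Dade--Glauberman--Nagao correspondence \cite[Thm.~5.2]{NS14}, applied with the trivial normal $\ell$-subgroup $D_0$ (legitimate since $\bl_{N/Q}(\vhi)$ has defect zero), translates this locus into defect-zero characters over $\vhi_0$ in the relevant normaliser of $\tQ$; its equivariance under the pertinent conjugation actions transports stabilisers, carries the extendibility of $\vhi$ to that of $\vhi_0$, and makes the correspondence compatible with block induction --- here one uses \cite[Lemma~2.3]{KS15}, \cite[Prop.~2.4]{Mu98} and \cite[Lemma~2.4]{Sp13}, exactly as in the proof of Lemma~\ref{lem:cover-weight}. Third, the mirror image of the first step on the $\tQ$-side (Clifford correspondence plus Gallagher, using that the defect-zero character $\vhi_0$ extends to its stabiliser in $\tN/\tQ$) identifies the outcome with $\dz(\tN/\tQ\mid\vhi_0)$. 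Composing the three yields $\Delta_\vhi$; the $\Lin(\hN/M)$-equivariance is built into the first step and is preserved by the other two, and the block identity $\bl_{\hN}(\chi)=\bl_{\tN}(\Delta_\vhi(\chi))^{\hN}$ follows from transitivity of block induction and the compatibilities recorded above.

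For part~(ii) I would repeat the three-step construction with Brauer characters throughout, using the modular Clifford correspondence, modular Gallagher theory (in the forms of Lemmas~\ref{lem:ext-res}, \ref{lem:ext-ind}, \ref{lem:ext-char}), and the modular Dade--Glauberman--Nagao correspondence; this yields the $\IBr(\hN/N)$-equivariant, block-induction-preserving bijection $\Xi_\vhi\colon\IBr(\hN\mid\vhi^0)\to\IBr(\tN\mid(\vhi_0)^0)$. For the commutative square, observe first that the vertical maps are the decomposition maps and are bijective exactly because $\vhi$ and $\vhi_0$ have defect zero: then $\vhi^0$ and $(\vhi_0)^0$ are irreducible, the linear characters of the $\ell$-group $M/N$ reduce trivially, and the first and third (Clifford/Gallagher) steps are literally the same bijection whether read on ordinary characters over $\hvhi$, resp.\ $\vhi_0$, or on Brauer characters over $\vhi^0$, resp.\ $(\vhi_0)^0$. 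Commutativity of the outer square therefore reduces to the standard compatibility of $d$ with the Clifford correspondence and with Gallagher multiplication over abelian $\ell'$-quotients, together with the single remaining fact that on the $\ell$-ramified core the ordinary and the modular Dade--Glauberman--Nagao correspondences are intertwined by $d$.

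The step I expect to be the main obstacle is precisely this last point --- establishing $d\circ\Delta_\vhi=\Xi_\vhi\circ d$ on the nose, i.e.\ the compatibility of the ordinary and modular DGN correspondences with reduction modulo $\ell$, plus the coordination needed so that the ordinary and modular Clifford/Gallagher reductions genuinely correspond under $d$ (for which one fixes the auxiliary projective representations once and for all and reads everything off from them). The remaining difficulty is the purely technical bookkeeping of defect groups and block-domination relations as one moves between $Q$, $\tQ$ and their normalisers in $G$ and $\tG$ --- the same pattern of argument, built on \cite[Lemma~2.3]{KS15}, \cite[Prop.~2.4]{Mu98} and \cite[Lemma~2.4]{Sp13}, that already underlies Lemma~\ref{lem:cover-weight}.
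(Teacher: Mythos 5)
Part (i) of this theorem is not proved in the paper at all: it is quoted verbatim from Brough--Sp\"ath \cite[Thm.~2.10]{BS20}, so your three-step reconstruction (extension $\hvhi$ to $M/Q$, Clifford/Gallagher reduction to $\Lin(\N_{\tG}(Q)_\vhi/M)$, a DGN-based character-triple isomorphism in the middle, and the mirror reduction on the $\tQ$-side) is doing work the paper outsources; in outline it does match the strategy behind the cited result. Two points you gloss over carry real weight, though. First, the $\rdz$-condition is not a sub-locus of $\Irr(\N_{\tG}(Q)\mid\hvhi)$ cut out by the Gallagher parameter: a degree count shows $(\chi(1)/\hvhi(1))_\ell=|\N_{\tG}(Q):\N_{\tG}(Q)_\vhi|_\ell$ for every $\chi\in\Irr(\N_{\tG}(Q)\mid\hvhi)$, so $\rdz(\N_{\tG}(Q)\mid\hvhi)$ is either empty or all of $\Irr(\N_{\tG}(Q)\mid\hvhi)$, and it is the latter precisely because $M/\N_G(Q)$ is a Sylow $\ell$-subgroup of $\N_{\tG}(Q)_\vhi/\N_G(Q)$ --- the one fact the paper's proof records. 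Second, on the $\tQ$-side you need the matching statement that $\N_{\tG}(\tQ)_{\vhi_0}/\N_G(\tQ)\tQ$ is an $\ell'$-group of the same order as $\N_{\tG}(Q)_\vhi/M$; this requires the stabilizer-transport property of the DGN correspondence \cite[Thm.~5.13]{NS14} together with a Frattini argument $\N_{\tG}(Q)_\vhi=\N_G(Q)\,\N_{\tG}(Q)_\vhi\cap\N_{\tG}(\tQ)$, and without it the two sets you want to biject need not even have the same cardinality.

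The genuine gap is in your treatment of (ii). You propose to construct $\Xi_\vhi$ independently via a ``modular Dade--Glauberman--Nagao correspondence'' and then name the identity $d\circ\Delta_\vhi=\Xi_\vhi\circ d$ as the main remaining obstacle; no such modular correspondence with a proven compatibility under reduction modulo $\ell$ is available off the shelf, and you leave this step unproved. The paper avoids the issue entirely. Once one knows that both vertical maps are bijections --- $d_{\N_{\tG}(Q)}$ on $\rdz(\N_{\tG}(Q)\mid\hvhi)$ and $d_{\N_{\tG}(\tQ)}$ on $\dz(\N_{\tG}(\tQ)/\tQ\mid\vhi_0)$, both ultimately because $\vhi$ and $\vhi_0$ have defect zero and $\N_{\tG}(Q)_\vhi/M$ is an $\ell'$-group (so the Gallagher parameter sets for ordinary and Brauer characters coincide) --- one simply \emph{defines} $\Xi_\vhi:=d_{\N_{\tG}(\tQ)}\circ\Delta_\vhi\circ d_{\N_{\tG}(Q)}^{-1}$. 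The square then commutes by construction, equivariance is inherited from (i), and block induction is preserved because $\bl(\chi)=\bl(\chi^0)$ whenever $\chi^0$ is irreducible. Replacing your independent construction of $\Xi_\vhi$ by this definition closes the gap and reduces (ii) to the bijectivity of the two decomposition maps, which is exactly the content of the paper's one-line remark that $M/\N_G(Q)$ is a Sylow $\ell$-subgroup of $\N_{\tG}(Q)_\vhi/\N_G(Q)$.
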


\begin{proof}
	(i) is \cite[Thm.~2.10]{BS20}.  
	Note that $M/\N_G(Q)$ is a Sylow $\ell$-subgroup of $\N_{\tG}(Q)_\vhi/\N_G(Q)$.
 (ii) follows from (i) and the construction of $\Delta_\vhi$ in \cite{BS20} directly.	
\end{proof}

\begin{rmk}
Following \cite[Problem (8.10)]{Na98}, $(\N_{\tG}(Q)_\vhi, \N_G(Q),\vhi)$ and $(\N_{\tG}(\tQ)_\vhi, \N_G(\tQ),\vhi_0)$ in Theorem \ref{thm:bij-wei-norm} are ordinary-modular character triples.
In addition, these two ordinary-modular character triples  are isomorphic.
\end{rmk}

Now we prove the following technical theorem, a general statement on the modular character triples, which will be  used in
the proof of Theorem \ref{main-thm-quasi-isolated}.

\begin{thm}\label{cor:ext-wei-dgn}
In the situation of Theorem \ref{thm:bij-wei-norm}, we assume further that $A$ is a finite group such that $G,\tG\unlhd A$.
\begin{enumerate}[\rm(i)]
	\item
\begin{enumerate}[\rm(a)]
\item Then $\Delta_\vhi$ and $\Xi_\vhi$ can be chosen to be compatible with the action of the group $\N_A(Q)$ in the sense that $(\Delta_\vhi)^x$ (resp. $(\Xi_\vhi)^x$) gives a map $\Delta_{\vhi^x}$ (resp. $\Xi_{\vhi^x}$) with the properties in  Theorem \ref{thm:bij-wei-norm}.
\item 
Let $\tG\le H\le A$ be an intermediate subgroup such that  for every prime $r$ dividing $|H/\tG|$ with $r\ne\ell$ the Sylow $r$-subgroups of $H/\tG$ are abelian.
Then for any
$\psi\in\IBr(\N_{\tG}(Q)\mid \vhi^0)$,
one has that
$\psi$ extends to $\N_H(Q)_\psi$ if and only if $\Xi_\vhi(\psi)$ extends to $\N_H(\tQ)_\psi$.
\end{enumerate}
\item Suppose $B$ is a union of blocks of $G$ such that $B$ is $A$-invariant (in particular, $B$ is a union of $\tG$-orbits) and there exists a blockwise $A$-equivariant bijection $$\Omega:\IBr(B)\to\Alp(B)$$ such that $(A_\psi,G,\psi)\geqslant_b (\N_A(Q)_\vhi,\N_G(Q),\vhi^0)$, for every $\psi\in \IBr(B)$ and $\Omega(\psi)=\overline{(Q,\vhi)}$.
Assume further that for every $(Q,\vhi)\in\Alp^0(B)$, $\vhi$ extends to its stabilizer in $\N_{\tG}(Q)$.
Let $\tcB$ be the union of blocks of $\tG$ covering the blocks in $B$.
Then there exists a blockwise $(\IBr(\tG/G)\rtimes A)$-equivariant bijection $$\wOm:\IBr(\tcB)\to\Alp(\tcB).$$

Let $\tG\le H\le A$ be an intermediate subgroup such that  for every prime $r$ dividing $|H/\tG|$ with $r\ne\ell$ the Sylow $r$-subgroups of $H/\tG$ are abelian.
Then for every $\tpsi\in \IBr(\tcB)$ and $\wOm(\tpsi)=\overline{(\tQ,\tvhi)}$, 
$\tpsi$ extends to  $H_{\tpsi}$  if and only if $\tvhi$ extends to $\N_H(\tQ)_{\tvhi}$.
\end{enumerate}
\end{thm}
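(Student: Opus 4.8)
The plan is to build $\wOm$ out of $\Omega$ by the standard Clifford-theoretic glueing, using the weight-covering relation between $\tG$ and $G$ and the bijections $\Delta_\vhi$, $\Xi_\vhi$ from Theorem \ref{thm:bij-wei-norm} as the local input. First I would fix a set of representatives $(Q,\vhi)$ of $\Alp^0(B)/G$ and, via $\Omega$, the corresponding $\psi\in\IBr(B)$; by hypothesis $(A_\psi,G,\psi)\geqslant_b(\N_A(Q)_\vhi,\N_G(Q),\vhi^0)$, so Lemma \ref{lem:char-triple-iso-exten} gives, for every intermediate group, an $\N_A(Q)$-equivariant bijection $\sigma_J:\IBr(J\mid\psi)\to\IBr(J\cap \N_A(Q)_\vhi\mid\vhi^0)$ with block-isomorphic normalizer triples. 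Running $J$ through the groups with $G\le J\le \tG$ and combining with the Clifford correspondence above both $\psi$ and $\vhi^0$, one obtains an $A$-equivariant bijection between $\IBr(\tG\mid\psi)$ and $\IBr(\N_{\tG}(Q)\mid\vhi^0)$ which, because $\tG/G$ is abelian, is also $\IBr(\tG/G)$-equivariant. Composing with $\Xi_\vhi$ from Theorem \ref{thm:bij-wei-norm}(ii) sends $\IBr(\N_{\tG}(Q)\mid\vhi^0)$ to $\IBr(\N_{\tG}(\tQ)\mid(\vhi_0)^0)=\BrCh(\N_{\tG}(\tQ)/\tQ\mid(\vhi_0)^0)$, i.e. to weight characters of $\tG$ supported on $\tQ$. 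Assembling these maps as $(Q,\vhi)$ varies over representatives, and using Lemma \ref{lem:cover-weight} together with the Clifford theory of weight-covering from \cite[\S2]{BS20} to see that the weights $(\tQ,\tvhi)$ so produced exhaust $\Alp^0(\tcB)/\tG$ exactly once, yields the desired bijection $\wOm:\IBr(\tcB)\to\Alp(\tcB)$; its $(\IBr(\tG/G)\rtimes A)$-equivariance follows from the equivariance built in at each stage, and the block-compatibility from the block-isomorphism conditions plus the block statements in Theorems \ref{thm:bij-wei-norm} and the Brauer-correspondence bookkeeping already done in Lemma \ref{lem:cover-weight}.

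For the extendibility statement I would argue along the same tower. Fix $\tpsi\in\IBr(\tcB)$ with $\wOm(\tpsi)=\overline{(\tQ,\tvhi)}$, and let $\psi\in\IBr(B)$ be the Brauer character of $G$ below $\tpsi$ (equivalently, with $\Omega(\psi)=\overline{(Q,\vhi)}$ and $(\tQ,\tvhi)$ covering $(Q,\vhi)$). The block isomorphism $(A_\psi,G,\psi)\geqslant_b(\N_A(Q)_\vhi,\N_G(Q),\vhi^0)$, restricted via Lemma \ref{lem:char-triple-iso-exten} to the subgroup $H$, transports $\tpsi\in\IBr(\tG\mid\psi)$ to a character $\psi'\in\IBr(\N_{\tG}(Q)\mid\vhi^0)$ and gives $(\N_H(Q)_{\psi'}\text{-type triple over }\tG)\cong(\text{triple over }\N_{\tG}(Q))$, so $\tpsi$ extends to $H_{\tpsi}$ iff $\psi'$ extends to $\N_H(Q)_{\psi'}$; here one uses that the relevant character triple isomorphism preserves extendibility of the characters lying over $\psi$ resp. $\vhi^0$, which for an intermediate group $H$ with abelian $r$-Hall quotients ($r\ne\ell$) is exactly the content of the block-isomorphism machinery. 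Then Theorem \ref{cor:ext-wei-dgn}(i)(b) — already available since $\Delta_\vhi,\Xi_\vhi$ were chosen $\N_A(Q)$-compatibly in part (i)(a) — says $\psi'$ extends to $\N_H(Q)_{\psi'}$ iff $\Xi_\vhi(\psi')=\tvhi$ (up to the identification of $\tvhi$ with a weight character) extends to $\N_H(\tQ)_{\tvhi}$. Chaining these equivalences gives the claim.

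The main obstacle I expect is the bookkeeping in the first paragraph: one must check that as $(Q,\vhi)$ runs over $G$-classes of $B$-weights and $\psi=\Omega^{-1}\overline{(Q,\vhi)}$, the $\tG$-characters $\IBr(\tG\mid\psi)$ and the $\tG$-weights covering $(Q,\vhi)$ match up \emph{compatibly with the $A$-action and with blocks}, with no double counting — this is where the Clifford theory of \cite[\S2]{BS20} for weight-covering, the transitivity Lemma \ref{lem:cover-weight}, and the equivariance of $\Delta_\vhi,\Xi_\vhi$ from part (i)(a) all have to be used in concert. A secondary technical point is that $\Xi_\vhi$ as constructed in Theorem \ref{thm:bij-wei-norm}(ii) lands in $\IBr(\N_{\tG}(\tQ)\mid(\vhi_0)^0)$ rather than literally in $\dz$-characters of a quotient, so one should record that $(\vhi_0)^0$ has $\tQ$ in its kernel and that $(\tQ,\tvhi)$ is genuinely a weight of $\tG$ covering $(Q,\vhi)$ — which is precisely the covering relation of \cite{BS20} read off from the diagram in Theorem \ref{thm:bij-wei-norm}(ii). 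Everything else is an application of the $\geqslant_b$ formalism (Theorem \ref{thm:triple-isomor}, Lemma \ref{lem:char-triple-iso-exten}, Proposition \ref{prop:indu-triple}) and the extendibility lemmas of this section.
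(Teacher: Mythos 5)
Your proposal only addresses part (ii) of the theorem; parts (i)(a) and (i)(b) are cited as ``already available'' inputs but never proved. In particular, the remark that (i)(b) is ``already available since $\Delta_\vhi,\Xi_\vhi$ were chosen $\N_A(Q)$-compatibly in part (i)(a)'' is not right: (i)(b) is not a formal consequence of the equivariance in (i)(a). In the paper, (i)(b) is a substantive argument in its own right. One must first pass from the Brauer-character statement to an ordinary-character statement via Lemma \ref{lem:ext-def-zero} (this is where the ``abelian Sylow $r$-subgroups of $H/\tG$ for $r\ne\ell$'' hypothesis is actually used, not in the $\geqslant_b$ formalism as your write-up suggests), then apply the Clifford-type induction of Lemma \ref{lem:ext-indu} to reduce to the question of whether $\vhi$ and its DGN-correspondent $\vhi_0$ extend to corresponding local stabilizers, and finally invoke \cite[Thm.~5.13]{NS14}. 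That chain is where the real work in (i)(b) lies; without it, the later equivalence $\psi'$ extends $\Leftrightarrow$ $\Xi_\vhi(\psi')$ extends, which your argument for (ii) relies on, has no support. Similarly, (i)(a) needs the $\N_A(Q)_\vhi$-equivariance of $\Delta_\vhi$ (from \cite[Thm.~5.13]{NS14}) together with the explicit well-definedness check for the conjugate formula $\Delta_{\vhi^x}(\chi)=\Delta_\vhi(\chi^{x^{-1}})^x$; this is short but should be said.

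For part (ii) itself, your outline essentially matches the paper's route: start from the block isomorphisms $(A_\psi,G,\psi)\geqslant_b(\N_A(Q)_\vhi,\N_G(Q),\vhi^0)$, use Lemma \ref{lem:char-triple-iso-exten} at the level $G\le\tG\le A_\psi$, assemble via Clifford theory over $\psi$ and $\vhi^0$, and compose with $\Xi_\vhi$ to land in weight characters of $\tG$. One point you gloss over but the paper treats carefully: to make the assembled bijection well-defined and $(\IBr(\tG/G)\rtimes A)$-equivariant, the triple isomorphisms $\sigma^{(\psi)}$ must first be \emph{chosen coherently} across the $\N_A(Q)$-orbit of $\psi$ (equation (\ref{equ:sigma-equiva}), in the spirit of \cite[Prop.~4.7(a)]{NS14}); then the paper shows the induced $\tvhi=\Ind^{\N_{\tG}(Q)}_{\N_{\tG}(Q)_\vhi}\sigma^{(\psi)}_{\tG_\psi}(\widehat\psi)$ is independent of the chosen $\psi\in\IBr(G\mid\tpsi)$, and that the re-assembled triples satisfy $(A_{\tpsi},\tG,\tpsi)\geqslant_b(\N_A(Q)_{\tvhi},\N_{\tG}(Q),\tvhi)$ by Proposition \ref{prop:indu-triple}. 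Your ``Clifford glueing'' needs that coherent choice spelled out to avoid the exact double-counting worry you flag as the ``main obstacle''; otherwise the approach is the same.
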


\begin{proof}
(i)  We first consider (a) and  it suffices to prove it for the $\N_A(Q)$-conjugacy class of a fixed character $\vhi$.
 First by \cite[Thm.~5.13]{NS14}, $\Delta_\vhi$ is $\N_A(Q)_\vhi$-equivariant.
 Then for every $x\in \N_A(Q)$, we consider $\vhi'=\vhi^x$.
Suppose that $y\in \N_A(Q)$ is another element such that $\vhi'=\vhi^y$.
Then $xy^{-1}\in \N_A(Q)_\vhi$.
So $\Delta_\vhi(\chi^{x^{-1}})^{xy^{-1}}=\Delta_\vhi(\chi^{y^{-1}})$ and hence $\Delta_\vhi(\chi^{x^{-1}})^{x}=\Delta_\vhi(\chi^{y^{-1}})^y$.
Thus we have shown that $\Delta_\vhi$ is well-defined by $\Delta_{\vhi^x}(\chi)=\Delta_{\vhi}(\chi^{x^{-1}})^x$.
It is straightforward to check that (a) holds.
For (b), thanks to  Lemma \ref{lem:ext-def-zero} and
Theorem \ref{thm:bij-wei-norm}, we transfer to ordinary characters: let $\chi\in\rdz(\N_{\tG}(Q)\mid\hvhi)$ with $\psi=\chi^0$. 
Then it is equivalent to  show that 
$\chi$ extends to $\N_H(Q)_\chi$ if and only if $\Delta_\vhi(\chi)$ extends to $\N_H(\tQ)_\chi$.
We note that $\N_H(Q)_{\chi}\subseteq \N_{\tG}(Q)\N_H(Q)_\vhi=:L$.
Then $\N_{\tG}(Q)_\vhi\unlhd L$.
Let $\eta\in\Irr(\N_{\tG}(Q)_\vhi)$, $\eta_0\in\Irr(\N_{\tG}(\tQ)_\vhi)$ such that $\Ind^{\N_{\tG}(Q)}_{\N_{\tG}(Q)_\vhi}\eta=\chi$ and 
$\Ind^{\N_{\tG}(\tQ)}_{\N_{\tG}(\tQ)_\vhi}\eta_0=\Delta_\vhi(\chi)$.
According to Lemma \ref{lem:ext-indu}, it suffices to show that $\eta$ extends to $L_\eta$ if and only if $\eta_0$ extends to $\N_L(\tQ)_{\eta_0}$.
Since $\eta$ and $\eta'$ are extensions of $\vhi$ and $\vhi_0$ respectively, we only need to show that  $\vhi$ extends to $L_\eta$ if and only if $\vhi_0$ extends to $\N_L(\tQ)_{\eta_0}$.
This follows by \cite[Thm.~5.13]{NS14}.

Now we consider (ii).
For every $\psi\in\IBr(B)$ and $\Omega(\psi)=\overline{(Q,\vhi)}$, we denote by $\sigma^{(\psi)}$ an isomorphism of the modular character triples  $(A_\psi,G,\psi)$ and $(\N_A(Q)_\vhi,\N_G(Q),\vhi^0)$ defined by the block isomorphism
$$(A_\psi,G,\psi)\geqslant_b (\N_A(Q)_\vhi,\N_G(Q),\vhi^0).$$ 
By an analogous argument as in the proof of \cite[Prop.~4.7(a)]{NS14}, we can choose  $\sigma^{(\psi)}$ such that
\begin{equation}\label{equ:sigma-equiva}
\sigma^{(\psi)}_J(\zeta)^x=\sigma^{(\psi^x)}_{J^x}(\zeta^x)\ \textrm{for every}\ G\le J\le A_\psi, \psi\in\IBr(B), \zeta\in\IBr(J\mid\psi), x\in \N_A(Q).
\addtocounter{thm}{1}\tag{\thethm}
\end{equation}
By Lemma \ref{lem:char-triple-iso-exten}, the bijection
 $$\sigma_{\tG_\psi}^{(\psi)}:\IBr(\tG_\psi\mid\psi)\to\IBr(\N_{\tG}(Q)_\vhi\mid \vhi^0)$$  satisfies for every $\widehat \psi\in\IBr(\tG_\psi\mid\psi)$ and $\widehat{\vhi^0}=\sigma_{\tG_\psi}^{(\psi)}(\widehat \psi)$ that
 $$((A_\psi)_{\widehat \psi},\tG_\psi,\widehat \psi)\geqslant_b ((\N_A(Q)_\vhi)_{\widehat{\vhi^0}},\N_{\tG}(Q)_\vhi,\widehat{\vhi^0})$$
and $\bl(\widehat \psi)=(\bl(\widehat{\vhi^0}))^{\tG_\psi}$.

For $\tpsi\in\IBr(\tcB)$, let $\psi\in\IBr(G\mid \tpsi)$ and $\widehat \psi\in\IBr(\tG_{\psi}\mid\psi)$ such that $\Ind^{\tG}_{\tG_\psi}\widehat \psi=\tpsi$.
Let $\psi'\in\IBr(G\mid \tpsi)$ and $\widehat \psi'\in\IBr(\tG_{\psi'}\mid\psi')$ be another pair of Brauer characters such that $\Ind^{\tG}_{\tG_{\psi'}}\widehat \psi'=\tpsi$.
Then there exists $g\in \tG$ such that $\psi'=\psi^g$ and $\widehat \psi'=\widehat \psi^g$.
Since $\Omega$ is $\tG$-equivariant, we may assume that $g\in \N_{\tG}(Q)$.
According to (\ref{equ:sigma-equiva}) one has
$\sigma^{(\psi')}_{\tG_{\psi'}}(\widehat\psi')=\sigma^{(\psi^g)}_{\tG_{\psi^g}}(\widehat\psi^g)=\sigma^{(\psi)}_{\tG_{\psi}}(\widehat\psi)^g$.
Then the induction to $\N_{\tG}(Q)$ of $\sigma^{(\psi')}_{\tG_{\psi'}}(\widehat\psi')$ and $\sigma^{(\psi)}_{\tG_{\psi}}(\widehat\psi)$ coincide.
So the Brauer character $\tvhi:=\Ind^{\N_{\tG}(Q)}_{\N_{\tG}(Q)_\vhi}\sigma^{(\psi)}_{\tG_{\psi}}(\widehat\psi)$ is well-defined and independent of the choice of $\psi$.
In addition, $\bl(\tvhi)^{\tG}=\bl(\tpsi)$.

Let $\mathcal T$ be a fixed complete set of representatives of $\tG$-orbits in $\IBr(B)$.
Then by the above paragraph, for every $\psi\in\mathcal T$ and $\Omega(\psi)=\overline{(Q,\vhi)}$,
 we can define a map $$\Pi_{\psi}:\ \IBr(\tG\mid \psi)\to \IBr(\N_{\tG}(Q)\mid \vhi^0),\quad
 \tpsi\mapsto \tvhi.$$
 In addition, $\N_{A}(Q)_{\tpsi}=\N_{A}(Q)_{\Pi_\psi(\tpsi)}$ and $\bl(\widetilde \psi)=(\bl(\widetilde{\vhi}))^{\tG}$.
By Proposition \ref{prop:indu-triple},
	$$(A_{\tpsi},\tG,\tpsi)\geqslant_b (\N_{A}(Q)_{\tvhi},\N_{\tG}(Q),\tvhi).$$
In particular, for  an intermediate subgroup $\tG\le H\le A$ such that  for every prime $r$ dividing $|H/\tG|$ with $r\ne\ell$ the Sylow $r$-subgroups of $H/\tG$ are abelian,
$\tpsi$ extends to  $H_{\tpsi}$  if and only if $\Pi_\psi(\tpsi)$ extends to $\N_H(\tQ)_{\tpsi}$.
By Clifford theory of Brauer characters and the Clifford-like properties of weights (cf. \cite[\S 2]{BS20}), we see that $\Pi_\psi$ is bijective.
Together with $\Xi_\vhi$, we have obtained a bijection between $\IBr(\tG\mid \psi)$ and $\Alp(\tG\mid \Omega(\psi))$.

Therefore, when $\psi$ runs through $\mathcal T$, one has a bijection $\wOm:\IBr(\tcB)\to\Alp(\tcB)$ and it can be checked directly that $\wOm$ is $(\IBr(\tG/G)\rtimes A)$-equivariant and satisfies the properties as described.
This completes the proof.
\end{proof}

\subsection{A criterion for the inductive BAW condition}

In \cite{Sp17} the inductive conditions for some of the local-global conjectures
were rephrased in terms of character triples.
Let $S$ be a finite non-abelian simple group, $\ell$ a prime dividing $|S|$,
 $G$  the universal $\ell'$-covering group of $S$, and $B$ an $\ell$-block of $G$.
By  \cite[Thm.~4.4]{Sp17}, $B$ is \emph{BAW-good} if for $\Ga=\Aut(G)_B$,
\begin{itemize}
\item there exists a $\Ga$-equivariant bijection $\Omega_B:\IBr(B)\to\Alp(B)$, and 
\item for every $\psi\in\IBr(G)$ and $\overline{(Q,\vhi)}=\Omega_B(\psi)$, one has 
\begin{equation}	\label{iso:quotient}
(G/Z\rtimes \Ga_\psi,G/Z,\overline \psi)\geqslant_b( (G/Z\rtimes \Ga)_{Q,\vhi}, \N_G(Q)/Z,\overline{\vhi^0} ),
\addtocounter{thm}{1}\tag{\thethm}
\end{equation}
 where $Z=\ker(\psi)\cap Z(G)$, and $\overline\psi$ and $\overline{\vhi^0}$ lift to $\psi$ and $\vhi^0$ respectively.
\end{itemize}
We say the simple group $S$ is \emph{BAW-good} (for the prime $\ell$) if every $\ell$-block of $G$ is BAW-good.

We note that by a similar result as \cite[Lemma~3.2]{NS14} for Brauer characters,
(\ref{iso:quotient}) implies Definition \ref{def:iBAW-bijection} (ii) but the converse is not necessarily true.
Thus if the block $B$ is BAW-good, then there exists an iBAW-bijection for $B$.

We end this section by recalling the following criterion
given by Brough and Sp\"ath \cite{BS20}  of the inductive BAW condition, which is useful for simole groups of Lie type, especially when the outer automorphism groups are non-cyclic.
For a finite group $G$, we dentoe by $\Bl(G)$ the set consisting of blocks of $G$.

\begin{thm}[Brough--Sp\"ath]\label{thm:criterion-block}
	Let $S$ be a finite non-abelian simple group and $\ell$ a prime dividing $|S|$.
	Let $G$ be the universal covering group of $S$, $B\subseteq \Bl(G)$ be a subset and assume	there are groups $\tG$, $E$ such that 
	 $G \unlhd \tG  E$, $B$ is $\tG$-invariant,
	and the following hold.
	\begin{enumerate}[\rm(i)]\setlength{\itemsep}{0pt}
		\item \begin{enumerate}[\rm(a)]\setlength{\itemsep}{0pt}
			\item $G=[\tG,\tG]$,
			$E$ stabilizes  $\tG$
			 and $E$ is abelian or isomorphic to the direct product of a cyclic
			group and the symmetric group $\mathfrak S_3$ on 3 elements,
			\item $\C_{\tG E}(G)=Z(\tG)$ and $\tG E/Z(\tG) \cong \Aut(G)$,
			\item any element of $\IBr(B)$ extends to its stabilizer in $\tG$, and
			\item for any $(Q,\varphi)\in \Alp^0(B)$, the weight character $\varphi$ extends to its stabilizer in $\N_{\tG}(Q)$.
			\end{enumerate}
	\item 	There exists a blockwise $\tG\rtimes E_B$-equivariant bijection
	$\Omega: \IBr(B) \to \Alp(B)$.
		\item Let $\mathcal{\widetilde B}$ be the union of $\ell$-blocks of $\tG$ covering the blocks in $B$.
		There exists an $\Lin_{\ell'}(\tG/G) \rtimes E_B$-equivariant bijection
		$\wOm: \IBr(\mathcal{\widetilde B}) \to \Alp(\mathcal{\widetilde B})$ such that
		\begin{enumerate}[\rm(a)]\setlength{\itemsep}{0pt}
				\item $\wOm(\IBr(\tG\mid \psi))=\Alp(\tG\mid \Omega(\psi))$ for every $\psi\in \IBr(B)$, 
				\item $\wOm(\IBr(\tG\mid\nu^0)\cap \IBr(\mathcal{\tB})) = \Alp(\tG\mid\nu)\cap \Alp(\mathcal{\tB})$ for every $\nu \in \Lin_{\ell'}(Z(\tG))$, and
			\item for every $\psi \in\IBr(\cB)$, there exists $\tpsi\in \IBr(\tG\mid\psi)$ such that  $\bl(\hpsi)=\bl(\hvhi)^{\tG_\psi}$ where $\hpsi\in\IBr(\tG_\psi\mid\psi)$ is the  Clifford correspondent of $\tpsi$,
			$\Omega(\psi)=\overline{(Q,\vhi)}$, $\wOm(\tpsi)=\overline{(\tQ,\tvhi)}$,
     $\hvhi$ is an extension of $\vhi$ to $\N_{\tG}(Q)_\vhi/Q$ satisfying that via induction and $\Delta_\vhi$ it corresponds to $(\tQ,\tvhi)$.
		\end{enumerate}

		\item In every $\tG$-orbit of Brauer characters in $\IBr(B)$, there exists some $\psi_0$ such that
		\begin{enumerate}[\rm(a)]\setlength{\itemsep}{0pt}
			\item $(\tG\rtimes E)_{\psi_0}=\tG_{\psi_0}\rtimes E_{\psi_0}$ and
		$\psi_0$ extends to $G\rtimes E_{\psi_0}$, and

		\item for $\overline{(Q,\varphi_0)}=\Omega(\psi_0)$, one has
	 $(\tG E)_{Q,\varphi_0} = \tG_{Q,\varphi_0} (GE)_{Q,\varphi_0}$ and
		 $(\varphi_0)^0$ extends to $(G\rtimes E)_{Q,\varphi_0}$.
		\end{enumerate}
	\end{enumerate}
	Then all blocks $b\in B$ are BAW-good.
\end{thm}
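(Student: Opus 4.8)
The plan is to show that the hypotheses (i)--(iv) of Theorem~\ref{thm:criterion-block} supply exactly the ingredients needed to produce an iBAW-bijection for each block $b\in B$ in the sense of Definition~\ref{def:iBAW-bijection}, and then to upgrade it to the quotient version \eqref{iso:quotient} using the Butterfly theorem and the assumption $\tG E/Z(\tG)\cong\Aut(G)$. First I would fix a $\tG$-transversal $\mathcal T$ in $\IBr(B)$ and, for each $\psi_0\in\mathcal T$, invoke (iv)(a) to choose a representative so that $(\tG E)_{\psi_0}=\tG_{\psi_0}\rtimes E_{\psi_0}$ and $\psi_0$ extends to $G\rtimes E_{\psi_0}$; together with (i)(c) this means $\psi_0$ extends to $(\tG E)_{\psi_0}$. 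On the weight side, writing $\overline{(Q,\vhi_0)}=\Omega(\psi_0)$ from (ii), hypothesis (iv)(b) gives $(\tG E)_{Q,\vhi_0}=\tG_{Q,\vhi_0}(GE)_{Q,\vhi_0}$ and that $(\vhi_0)^0$ extends to $(G\rtimes E)_{Q,\vhi_0}$; combined with (i)(d) this yields an extension of $(\vhi_0)^0$ to $(\tG E)_{Q,\vhi_0}$.

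The heart of the argument is to build the block isomorphism of modular character triples required in \eqref{iso:quotient}. For a fixed $\psi_0\in\mathcal T$ I would first establish it at the level of $G$ and its normalizers: the extendibility statements from (iv) produce compatible projective representations $\cP$ of $(\tG E)_{\psi_0}$ and $\cP'$ of $(\tG E)_{Q,\vhi_0}$, and one checks that $(\cP,\cP')$ witnesses $((\tG E)_{\psi_0},G,\psi_0)\geqslant_b((\tG E)_{Q,\vhi_0},\N_G(Q),(\vhi_0)^0)$. For the block-compatibility clause in the definition of $\geqslant_b$ one uses that $\Omega$ is blockwise, so $\bl(\psi_0)$ and $\bl_{\N_G(Q)}(\vhi_0)$ correspond under Brauer induction, together with the description of $B$-weights via Brauer pairs in Remark~\ref{remark:wei-fusion-system}; the condition on a defect group $D$ of $\bl((\vhi_0)^0)$ with $\C(D)$ inside the smaller group follows since $\C_{\tG E}(G)=Z(\tG)\le \N_G(Q)\le(\tG E)_{Q,\vhi_0}$ by (i)(b). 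Passing from this to the isomorphism for an arbitrary $\psi$ in the $\tG$-orbit of $\psi_0$, and to all intermediate subgroups, is handled by Lemma~\ref{lem:char-triple-iso-exten} and Proposition~\ref{prop:indu-triple}, exactly as in the ordinary-character analogue, using that $E$ is abelian or $C_r\times\mathfrak S_3$ (so the relevant $r'$-Sylow subgroups are abelian for $r\ne\ell$) to apply Lemma~\ref{lem:ext-def-zero}, Lemma~\ref{lem:ext-indu} and Lemma~\ref{lem:ext-char}.

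To get from the $\geqslant_b$ relation for $((\tG E)_\psi,G,\psi)$ over $\N_G(Q)$ to the quotient version \eqref{iso:quotient} with $Z=\ker(\psi)\cap Z(G)$ and $\Ga=\Aut(G)_B$, I would pass to $G/Z$ using standard reduction of character triples modulo a central $\ell'$-subgroup and then apply the Butterfly theorem~\ref{thm:butt-thm}: since $\tG E/Z(\tG)\cong\Aut(G)$ and $\C_{\tG E}(G)=Z(\tG)$, the groups $\tG E$ and $G\rtimes\Aut(G)$ induce the same subgroup of $\Aut(G)$, so the block isomorphism transports from $(\tG E)_\psi$ to $(G\rtimes\Ga)_\psi$ and from $(\tG E)_{Q,\vhi}$ to the corresponding subgroup of $G\rtimes\Ga$. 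The $\Ga$-equivariance of the resulting $\Omega_B:\IBr(b)\to\Alp(b)$ comes from the equivariance in (ii) together with the compatible choice of the $\sigma^{(\psi)}$ as in \eqref{equ:sigma-equiva}; hypotheses (iii) and Theorem~\ref{cor:ext-wei-dgn} are needed to guarantee that the bijection on $\IBr(B)$ can be chosen compatibly with covering of weights over $\tG$, which is what makes the equivariance under all of $\Aut(G)$ (not just inner automorphisms) go through.

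The main obstacle I anticipate is the block-theoretic bookkeeping in verifying the defining conditions of ``$\geqslant_b$'' --- specifically, tracking Brauer correspondence through the chain of subgroups $\N_G(Q)\le \N_{\tG}(Q)\le\N_{\tG E}(Q)$ and its interaction with the DGN correspondence and the covering relation for weights from \S2.5, so that the block-preservation clause $\bl(\psi)=\bl(\sigma_J(\psi))^J$ holds simultaneously for all intermediate $J$. This is where Lemma~\ref{lem:cover-weight} and the commutative diagram in Theorem~\ref{thm:bij-wei-norm} are essential, and where one must be careful that the weight $(Q,\vhi_0)$ chosen from $\Omega(\psi_0)$ is the one compatible with a maximal $B$-Brauer pair as in Remark~\ref{remark:wei-fusion-system}. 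Everything else --- the extendibility manipulations, the induction of projective representations, the equivariance --- is routine given the lemmas already assembled in \S2 and \S3.
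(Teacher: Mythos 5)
The paper does not actually prove this theorem: its ``proof'' is a two-line citation to \cite[\S 4]{BS20} and to the proof of \cite[Thm.~2.2]{FLZ20b}, so there is no internal argument to compare yours against line by line. Your outline does reconstruct the strategy of the cited Brough--Sp\"ath argument in its broad strokes: build projective representations from the extendibility hypotheses in (i) and (iv), verify $\geqslant_b$ between the global triple over $G$ and the local triple over $\N_G(Q)$, transport via the Butterfly Theorem~\ref{thm:butt-thm} using $\C_{\tG E}(G)=Z(\tG)$ and $\tG E/Z(\tG)\cong\Aut(G)$, and pass to the central quotient $G/Z$ to reach \eqref{iso:quotient}. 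The role you assign to (iii) and to the covering relation of weights is also essentially the right one.

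Two concrete points need repair, however. First, your claim that (iv)(a) ``together with (i)(c) means $\psi_0$ extends to $(\tG E)_{\psi_0}$'' is false in general: an extension to $\tG_{\psi_0}$ and an extension to $G\rtimes E_{\psi_0}$ do not combine into an extension to $\tG_{\psi_0}E_{\psi_0}$, and indeed no genuine extension exists in general. What the hypotheses actually yield is a projective representation of $(\tG E)_{\psi_0}$ whose factor set is inflated from $(\tG E)_{\psi_0}/G\cong \tG_{\psi_0}/G\times E_{\psi_0}G/G$, obtained by gluing the two extensions; the factorization $(\tG E)_{\psi_0}=\tG_{\psi_0}\rtimes E_{\psi_0}$ is exactly what makes this gluing possible. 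The same correction applies on the local side. Second, the defining condition of ``$\geqslant$'' is the \emph{equality} of the factor sets $\alpha|_{H\times H}=\alpha'$ of the global and local projective representations, and your sketch never explains how this is arranged; this is the technical heart of \cite{BS20} and is precisely where hypothesis (iii) (matching the Clifford theory over $\tG$ on both sides, via $\wOm(\IBr(\tG\mid\psi))=\Alp(\tG\mid\Omega(\psi))$ and the $\Lin_{\ell'}(\tG/G)$-equivariance) is consumed, not merely the equivariance bookkeeping you attribute to it. Without an argument that the two factor sets, each living on the common quotient, can be chosen to coincide, the relation $\geqslant_b$ is not established and the proof is incomplete.
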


\begin{proof}
This is in fact proved in \cite[\S 4.B]{BS20}, see the proof of \cite[Thm.~4.5 and Lemma~4.6]{BS20}. See also \cite{FLZ20b}.
\end{proof}


\section{Finite groups of Lie type and equivariant Morita equivalences}\label{sec:lie-type-equivalence}

In this section, we first recall some basic facts in the representation theory of finite groups of Lie type and then establish a Jordan decomposition for weights.
The local subgroups of a connected reductive group may be  disconnected.
So we start from algebraic groups which are possibly disconnected.
Fix a prime $p$ and an algebraic closure $\overline{\F}_p$ of the field $\F_p$.
We always assume that $p\ne\ell$.

\subsection{Notation}

Let $\bX$ be a quasi-projective  algebraic variety over $\overline{\F}_p$ and let $G$ be a finite group acting on $\bX$.
Then there is an object $\G\Ga_c(\bX,\Lambda)$ of $\mathrm{Ho}^b(\Lambda G\textrm{-perm})$ which is unique up to isomorphism as in \cite[2.C]{BDR17}. 
Denote by $\R\Ga_c(\bX,\Lambda)$ the image of  $\G\Ga_c(\bX,\Lambda)$ in $D^b(\Lambda G)$ (the global section complex).
Moreover, we denote by $H^d_c(\bX,\Lambda)\in \Lambda G$-mod the $d$-th cohomology module of the complex $\R\Ga_c(\bX,\Lambda)$.
Suppose that $\bX$ is of dimension $n$.
We write 
$H^{\dim}_c(\bX,\Lambda)=H^n_c(\bX,\Lambda)$.

By \cite[Thm.~4.2]{Ri94}, if $Q$ is an $\ell$-subgroup of $G$,
then the inclusion $\bX^Q\hookrightarrow \bX$ induces an isomorphism
$$\G\Ga_c(\bX^Q,k)\to \Br_Q(\G\Ga_c(\bX,\Lambda))$$
in $\mathrm{Ho}^b(k\N_G(Q)\textrm{-perm})$.

\subsection{Finite groups of Lie type and representations}

Let $\bG$ be a (possibly disconnected) reductive algebraic group defined over $\overline{\F}_p$. We refer to \cite{DM94} and \cite{BDR17} for some basic results on disconnected algebraic groups.
Recall that $\Lambda\in\{\cO,k\}$.

Denote by $\bG^\circ$ the connected component of $\bG$ containing the identity.
A torus of $\bG$ is a torus of $\bG^\circ$.
A closed subgroup $\bP$ of $\bG$ is called a \emph{parabolic subgroup} if the variety $\bG/\bP$ is complete.
If $\bP$ is a parabolic subgroup of $\bG$ , then $\bP^\circ=\bP\cap\bG^\circ$.
In addition, a closed subgroup of $\bP$ is a parabolic subgroup of $\bG$ if and only if $\bP^\circ$ is a parabolic subgroup of $\bG^\circ$.

Suppose that $\bP$ is a parabolic subgroup of $\bG$.
Let $\bU$ be its unique maximal connected unipotent normal subgroup and we call $\bU$ the \emph{unipotent radical} of $\bP$.
The \emph{Levi complement} of $\bP$ with respect to $\bU$ is a subgroup $\bL$ such that $\bP=\bU\rtimes \bL$.
We note that $\bU$ is also the unipotent radical of $\bP^\circ$.
Then $\bP^\circ=\bU\rtimes \bL^\circ$ is a Levi decomposition and $\bL=\N_{\bP}(\bL^\circ)$.

Let $Q$ be a finite solvable $p'$-subgroup of $\bG$.
Then by \cite[Remark 3.5]{BDR17}, $\N_{\bG}(Q)$ is a reductive group.
Let $\bP$ be a parabolic subgroup of $\bG$ with Levi decomposition $\bP=\bU\rtimes \bL$.
Assume further that $Q$ normalizes $\bP$ and $\bL$. 
Then $\N_{\bP}(Q)$ is a parabolic subgroup of $\N_{\bG}(Q)$ with unipotent radical $\C_{\bU}(Q)$ and Levi complement $\N_{\bL}(Q)$, \emph{i.e.}, $\N_{\bP}(Q)=\C_{\bU}(Q)\rtimes \N_{\bL}(Q)$.
Similarly, by \cite[Prop.~3.4]{BDR17},
$\C_{\bG}(Q)$ is a reductive group and $\C_{\bP}(Q)$ is a parabolic subgroup of $\C_{\bG}(Q)$ with Levi decomposition $\C_{\bG}(Q)=\C_{\bU}(Q)\rtimes \C_{\bL}(Q)$.

Let $F:\bG\to\bG$ be a Steinberg map, a power of which is a Frobenius endomorphism defining an $\F_q$-structure on $\bG$, where $q$ is a power of $p$.
Let $\bP$ be a parabolic subgroup of $\bG$ with unipotent radical $\bU$ and 
$F$-stable Levi complement $\bL$.
From now on we assume that $\ell$ is a prime different from $p$.
The \emph{Deligne--Lusztig varieties} were introduced in \cite{DL76} and we recall the definition as follows
$$\bY_{\bU}^{\bG}=\{ g\bU\in \bG/\bU\mid g^{-1}F(g)\in \bU\cdot F(\bU)  \}.$$
Note that $\bY_{\bU}^{\bG}$ has a left  $\bG^F$- and right $\bL^F$-action.

Let $\sigma:\bG\to\bG$ be a bijection morphism of algebraic groups with commutes with the action of $F$, \emph{i.e.}, $\sigma\circ F=F\circ \sigma$.
Let $\bP$ be a parabolic subgroup of $\bG$ with Levi decomposition $\bP=\bU\rtimes \bL$ such that $F(\bL)=\bL$.
Then we have:

\begin{lem}\label{lem:action-lie}
$\sigma$ induces an isomorphism $$\sigma^*:\G\Gamma_c(\bY_{\sigma(\bU)}^{\bG},\Lambda)\to {}^\sigma \G\Gamma_c(\bY_{\bU}^{\bG},\Lambda)^\sigma$$ in $\mathrm{Ho}^b(\Lambda[\bG^F\ti(\bL^F)^{\opp}])$.
\end{lem}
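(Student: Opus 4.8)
The statement is a functoriality (naturality) assertion for the complex $\G\Gamma_c$ of the Deligne--Lusztig variety under a bijective morphism $\sigma$ commuting with $F$. The plan is to reduce it to the corresponding statement on the level of the variety itself, namely that $\sigma$ induces an isomorphism of varieties $\bY_{\sigma(\bU)}^{\bG} \to \bY_{\bU}^{\bG}$ which is equivariant for the relevant group actions after twisting, and then invoke the functoriality of the construction $\bX \mapsto \G\Gamma_c(\bX,\Lambda)$ recorded in \cite[2.C]{BDR17}.

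First I would write down the map of varieties explicitly. Since $\sigma$ is a bijective morphism of algebraic groups with $\sigma\circ F = F\circ\sigma$, it carries the parabolic $\bP = \bU\rtimes\bL$ to the parabolic $\sigma(\bP) = \sigma(\bU)\rtimes\sigma(\bL)$, and $F(\sigma(\bL))=\sigma(\bL)$. The induced map $\bG/\sigma(\bU)\to\bG/\bU$ sending $g\,\sigma(\bU)\mapsto \sigma^{-1}(g)\,\bU$ — or rather, to keep the $F$-structure straight, one checks that $g\in\bG$ satisfies $g^{-1}F(g)\in\sigma(\bU)\cdot F(\sigma(\bU))$ if and only if $\sigma^{-1}(g)^{-1}F(\sigma^{-1}(g)) = \sigma^{-1}(g^{-1}F(g))\in\bU\cdot F(\bU)$, using $\sigma F=F\sigma$. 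Hence $g\,\sigma(\bU)\mapsto \sigma^{-1}(g)\,\bU$ is a well-defined isomorphism of varieties $\bY_{\sigma(\bU)}^{\bG}\to\bY_{\bU}^{\bG}$. Next I would track the bi-equivariance: the left $\bG^F$-action and right $\bL^F$-action on $\bY_{\sigma(\bU)}^{\bG}$ (where now the relevant Levi is $\sigma(\bL)$, so really the right action is by $\sigma(\bL)^F$) transport under this isomorphism to the actions on $\bY_{\bU}^{\bG}$ twisted by $\sigma$; concretely $x\cdot g\cdot y \mapsto \sigma^{-1}(x)\cdot\sigma^{-1}(g)\cdot\sigma^{-1}(y)$, which is exactly the statement that the isomorphism is $\sigma$-semilinear, i.e. identifies the $\Lambda[\bG^F\times(\sigma(\bL)^F)^{\opp}]$-variety $\bY_{\sigma(\bU)}^{\bG}$ with the $\sigma$-conjugate ${}^{\sigma}(\bY_{\bU}^{\bG})^{\sigma}$ as a $\Lambda[\bG^F\times(\bL^F)^{\opp}]$-variety.

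Then I would apply the functoriality of $\bX\mapsto\G\Gamma_c(\bX,\Lambda)$: an isomorphism of $G$-varieties induces an isomorphism of the corresponding objects in $\mathrm{Ho}^b(\Lambda G\text{-perm})$, unique up to the usual ambiguity, and this is compatible with restriction of the acting group and with twisting by automorphisms. Applying this to the variety isomorphism above, twisted appropriately, yields the desired isomorphism $\sigma^*:\G\Gamma_c(\bY_{\sigma(\bU)}^{\bG},\Lambda)\to{}^{\sigma}\G\Gamma_c(\bY_{\bU}^{\bG},\Lambda)^{\sigma}$ in $\mathrm{Ho}^b(\Lambda[\bG^F\times(\bL^F)^{\opp}])$. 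One should remark that the hypothesis ``$F(\bL)=\bL$'' (equivalently $F(\sigma(\bL))=\sigma(\bL)$) is what makes $\bL^F$ (resp. $\sigma(\bL)^F$) act, so that the source and target live in the stated homotopy category.

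The main obstacle, such as it is, is bookkeeping rather than conceptual: one must be careful about which Levi the right action is by (on $\bY_{\sigma(\bU)}^{\bG}$ it is a priori $\sigma(\bL)^F$, and the identification with $\bL^F$ goes through $\sigma$), and about the direction of the twist, so that the two sides match on the nose as objects with a $\bG^F\times(\bL^F)^{\opp}$-action and not merely up to an outer twist. A secondary point is that $\G\Gamma_c$ is only well-defined up to isomorphism in the homotopy category, so ``$\sigma^*$'' is canonical only in that weak sense; this is exactly the framework of \cite[2.C]{BDR17} and \cite[Thm.~4.2]{Ri94}, so no new input is needed — one simply cites these and verifies the compatibilities listed above.
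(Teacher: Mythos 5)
Your proposal is correct and is essentially the argument the paper intends: the paper's proof of this lemma is a one-line citation of Rouquier \cite[Thm.~2.12]{Ro02} and Ruhstorfer \cite[Lemma~2.23]{Ru20b}, and what you spell out --- the explicit $\sigma$-semilinear isomorphism of varieties $\bY^{\bG}_{\sigma(\bU)}\to\bY^{\bG}_{\bU}$, $g\,\sigma(\bU)\mapsto\sigma^{-1}(g)\,\bU$, the equivariance bookkeeping identifying the right $\sigma(\bL)^F$-action with the twisted $\bL^F$-action, and the passage to $\G\Gamma_c$ --- is precisely the content of those references. One citation-level correction: the functoriality you need at the level of $\mathrm{Ho}^b$ (that a $\sigma$-equivariant isomorphism of varieties induces a well-defined isomorphism of the $\G\Gamma_c$ objects in the homotopy category, not merely of $\R\Gamma_c$ in $D^b$) is Rouquier's \cite[Thm.~2.12]{Ro02} as refined in \cite[2.C]{BDR17}, rather than \cite[Thm.~4.2]{Ri94}, which concerns compatibility with the Brauer functor.
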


\begin{proof}
Note that $\sigma(\bP)$ is a 	parabolic subgroup of $\bG$ with $F$-stable Levi subgroup $\sigma(\bL)$ and unipotent radical $\sigma(\bU)$.
This follows by a theorem of Rouquier \cite[Thm.~2.12]{Ro02}; see \cite[Lemma~2.23]{Ru20b} for details.
\end{proof}

\subsection{Splendid Rickard equivalences and Brauer categories}

From now on we assume that $\bG$ is a connected reductive group.
Let $(\bG^*,F^*)$ be Langlands dual to $(G,F)$ and we also dentoe $F^*$ by $F$ in the following.

Let $s$ be a semisimple $\ell'$-element of ${\bG^*}^F$.
Then by a theorem of Brou\'e--Michel \cite{BM89}, there is a central idempotent $e_{s}^{\bG^F}$ of $\Lambda \bG^F$, which is uniquely determined by the ${\bG^*}^F$-conjugacy class containing $s$; 
see also \cite[\S9.2]{CE04}.
If $b$ is a block idempotent of $Z(\Lambda \bG^Fe_s^{\bG^F})$, then we say the block \emph{$b$ is in $s$}. 

Now assume that $\bL^*$ is an $F$-stable Levi subgroup  of $\bG^*$ containing $\C_{{\bG^*}^F}(s)\C^\circ_{\bG^*}(s)$ throughout this section. 
Let $\bL$ be the $F$-stable Levi subgroup of $\bG$ dual to $\bL^*$.
It was proven by Bonnaf\'e--Rouquier \cite{BR03} that the bimodule $H^{\dim}_c(\bY_{\bU}^{\bG},\cO)e_{s}^{\bL^F}$ induces a Morita equivalence between $\cO \bL^F e_{s}^{\bL^F}$ and $\cO \bG^F e_{s}^{\bG^F}$.
This was improved by Bonnaf\'e, Dat and Rouquier and we state their result as follows: 

\begin{thm}[Bonnaf\'e--Dat--Rouquier, {\cite[Thm.~7.7]{BDR17}}]\label{thm:equ-blocks}
The complex $\mathcal C:=\G\Ga_c(\bY^{\bG}_{\bU},\cO)^{\rm{red}}e_s^{\bL^F}$ of $\cO \bG^F e_s^{\bG^F}$-$\cO \bL^F e_s^{\bL^F}$-bimodules induces a splendid Rickard equivalence between $\cO \bL^F e_s^{\bL^F}$ and $\cO \bG^F e_s^{\bG^F}$.
The bimodule $H^{\dim(\bY^{\bG}_{\bU})}(\mathcal C)$ induces a Morita equivalence between $\cO \bL^F e_s^{\bL^F}$ and $\cO \bG^F e_s^{\bG^F}$.
 \end{thm}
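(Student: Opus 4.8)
This is due to Bonnaf\'e, Dat and Rouquier \cite{BDR17}. The plan, following their argument, is to start from the Bonnaf\'e--Rouquier Morita equivalence and to promote it to a splendid Rickard equivalence by a local analysis based on the fixed-point theorem \cite[Thm.~4.2]{Ri94} recalled above.

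First I would recall the input from \cite{BR03}: under the standing hypothesis $\C_{{\bG^*}^F}(s)\C^\circ_{\bG^*}(s)\le\bL^*$ with $s$ an $\ell'$-element, the complex $\R\Ga_c(\bY^{\bG}_{\bU},\cO)e_s^{\bL^F}$ is concentrated, in the derived category of $\cO\bG^Fe_s^{\bG^F}$-$\cO\bL^Fe_s^{\bL^F}$-bimodules, in the single degree $d:=\dim\bY^{\bG}_{\bU}$, where it equals $M:=H^{d}_c(\bY^{\bG}_{\bU},\cO)e_s^{\bL^F}$; this bimodule is projective on both sides and induces a Morita equivalence between $\cO\bL^Fe_s^{\bL^F}$ and $\cO\bG^Fe_s^{\bG^F}$ (the ingredients being the Mackey formula for Lusztig functors and a disjointness argument on Lusztig series). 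Since the reduced complex $\mathcal C=\G\Ga_c(\bY^{\bG}_{\bU},\cO)^{\mathrm{red}}e_s^{\bL^F}$ has the same homology, it is concentrated in degree $d$ with $H^{\dim(\bY^{\bG}_{\bU})}(\mathcal C)\cong M$; this already yields the last assertion, so the point is the splendid Rickard statement for $\mathcal C$.

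Splendidness is formal: $\bY^{\bG}_{\bU}$ carries a left $\bG^F$-action and a \emph{free} right $\bL^F$-action, so $\G\Ga_c(\bY^{\bG}_{\bU},\cO)$ is an object of $\mathrm{Ho}^b(\cO[\bG^F\ti(\bL^F)^{\opp}]\textrm{-perm})$ whose indecomposable summands have vertices contained, up to conjugacy, in $\Delta\bL^F$, and multiplying by the central idempotents $e_s$ preserves these properties; moreover, that $\mathcal C$ induces a Rickard (equivalently, derived) equivalence is immediate from the previous paragraph, its homology being the Morita bimodule $M$ concentrated in one degree. The substantive part is to verify that this is a splendid Rickard equivalence with the correct \emph{local} behaviour, and this is where the Brauer functor enters. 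For an $\ell$-subgroup $R$ of $\bL^F$, I would use the fixed-point theorem \cite[Thm.~4.2]{Ri94} to get $\Br_{\Delta R}(\G\Ga_c(\bY^{\bG}_{\bU},\cO))\simeq\G\Ga_c((\bY^{\bG}_{\bU})^{\Delta R},k)$, and then identify $(\bY^{\bG}_{\bU})^{\Delta R}$ with the Deligne--Lusztig variety $\bY^{\C_{\bG}(R)}_{\C_{\bU}(R)}$ of the (in general \emph{disconnected}) reductive group $\C_{\bG}(R)$, attached to the parabolic subgroup $\C_{\bP}(R)=\C_{\bU}(R)\rtimes\C_{\bL}(R)$ --- the structural facts on parabolic and Levi subgroups of disconnected reductive groups recalled in this section being exactly what makes this possible. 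Combining this with the compatibility of block idempotents under $\mathrm{br}_R$ (using that $s$ is an $\ell'$-element, so $\mathrm{br}_R$ carries $e_s^{\bG^F}$ to the corresponding idempotent for $\C_{\bG}(R)$) and with the fact that the centralizer hypothesis is inherited by $\C_{\bG}(R)\supseteq\C_{\bL}(R)$, I would conclude by induction on $\dim\bG$, the base case $R=1$ being the Bonnaf\'e--Rouquier Morita equivalence itself.

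The hard part is this last step: matching the Brauer quotients of the Deligne--Lusztig complex with genuine Deligne--Lusztig complexes for the disconnected centralizers $\C_{\bG}(R)\supseteq\C_{\bL}(R)$, keeping track of the idempotents $e_s$ and of the centralizer hypothesis through the induction, and extracting from this the splendid Rickard property. This local analysis is the technical heart of \cite{BDR17}, to which we refer for the full details.
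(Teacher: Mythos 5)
Your proposal is consistent with the paper's treatment: the paper states this result as a direct quotation of Bonnaf\'e--Dat--Rouquier \cite{BDR17} and offers no proof of its own, so there is nothing internal to compare against. Your sketch of the \cite{BDR17} strategy (concentration of $\R\Ga_c(\bY^{\bG}_{\bU},\cO)e_s^{\bL^F}$ in degree $\dim\bY^{\bG}_{\bU}$ via \cite{BR03}, vertices in $\Delta\bL^F$ from freeness of the right $\bL^F$-action, and the local analysis via $\Br_{\Delta R}$ and fixed-point varieties of disconnected centralizers) is a faithful roadmap of the cited argument, and like the paper you correctly defer the technical heart to the reference.
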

 
 Let $b$ be a block idempotent of $Z(\cO \bG^F e_s^{\bG^F})$ and $c\in Z(\cO \bL e_s^{\bL^F})$ the block idempotent corresponding to $b$ under the Morita equivalence between $\cO \bL e_s^{\bL^F}$ and $\cO \bG^F e_s^{\bG^F}$ given in Theorem \ref{thm:equ-blocks}.
 By a theorem of Puig \cite[Thm.~19.7]{Pu99}, the Brauer categories of splendid Rickard equivalent blocks are equivalent. 
 Let $(Q,c_Q)$ be a $c$-Brauer pair.
 By \cite[Thm.~1.7]{Ha99}~(see also \cite[\S 1.6]{Ru20a}),  there is a unique $b$-Brauer pair $(Q,b_Q)$ such that the complex $b_Q \Br_{\Delta Q}(\mathcal C)c_Q$ induces a Rickard equivalence between $k \C_{\bL^F}(Q)c_Q$ and  $k\C_{\bG^F}(Q)b_Q$.
 For any other $b$-Brauer pair $(Q,b_Q')$ one has $b_Q' \Br_{\Delta Q}(\mathcal C)c_Q=0$ in $\mathrm{Ho}^b(k[\C_{\bG^F}(Q)\times \C_{\bL^F}(Q)^{\opp}])$.
 For any other $c$-Brauer pair $(Q,c_Q')$ one has $b_Q \Br_{\Delta Q}(\mathcal C)c_Q'=0$ in $\mathrm{Ho}^b(k[\C_{\bG^F}(Q)\times \C_{\bL^F}(Q)^{\opp}])$.
 
 We also view the Brauer categories $\bBr(\bL^F,c)$ and $\bBr(\bG^F,b)$ as sets of their objects.
 Then we have defined a map as follows:
 
 \begin{lem}\label{lem:inj-Bra-cat}
The map $\Theta$ given by $(Q,c_Q)\mapsto (Q,b_Q)$ from $\bBr(\bL^F,c)$ to $\bBr(\bG^F,b)$ is injective.
 \end{lem}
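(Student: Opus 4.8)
The plan is to read off injectivity directly from the properties of the local complexes $b_Q\Br_{\Delta Q}(\mathcal C)c_Q$ recorded immediately before the statement, so that no further computation is needed.

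First I would note that $\Theta$ preserves the first component of a Brauer pair: by construction $\Theta(Q,c_Q)=(Q,b_Q)$ involves the same $\ell$-subgroup $Q$. Hence if $(Q,c_Q)$ and $(Q',c_{Q'})$ are $c$-Brauer pairs with $\Theta(Q,c_Q)=\Theta(Q',c_{Q'})$, then necessarily $Q=Q'$; writing $(Q,b_Q)$ for the common image, it remains only to prove $c_Q=c_{Q'}$.

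Next I would argue by contradiction, assuming $c_Q\ne c_{Q'}$. Then $(Q,c_{Q'})$ is a $c$-Brauer pair with first component $Q$ which is distinct from $(Q,c_Q)$, so the last of the three properties recorded just before the statement (applied to $\Theta(Q,c_Q)=(Q,b_Q)$) gives $b_Q\Br_{\Delta Q}(\mathcal C)c_{Q'}=0$ in $\mathrm{Ho}^b(k[\C_{\bG^F}(Q)\times\C_{\bL^F}(Q)^{\opp}])$. On the other hand, the equality $\Theta(Q,c_{Q'})=(Q,b_Q)$ means precisely that $b_Q\Br_{\Delta Q}(\mathcal C)c_{Q'}$ induces a Rickard equivalence between the block algebras $k\C_{\bL^F}(Q)c_{Q'}$ and $k\C_{\bG^F}(Q)b_Q$. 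Both of these algebras are nonzero, whereas a complex homotopy equivalent to $0$ has total endomorphism complexes homotopy equivalent to $0$, and so can induce a Rickard (equivalently, derived) equivalence only between zero algebras. This contradiction forces $c_Q=c_{Q'}$, and hence $\Theta$ is injective.

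I do not anticipate a genuine obstacle: the only two points that merit a word of justification are that $\Theta$ leaves the $\ell$-subgroup of a Brauer pair unchanged and that the zero complex induces no Rickard equivalence between nonzero algebras. Alternatively, one could bypass the latter and instead deduce $c_Q=c_{Q'}$ from the uniqueness clause of \cite[Thm.~1.7]{Ha99} applied to $(Q,b_Q)$, since both $c_Q$ and $c_{Q'}$ then have the defining property of the block attached to $(Q,b_Q)$.
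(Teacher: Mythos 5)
Your proof is correct and is exactly the argument the paper intends: the lemma is stated there without proof, as an immediate consequence of the three properties of $b_Q\Br_{\Delta Q}(\mathcal C)c_Q$ recorded just before it, and your deduction (a complex that is zero in $\mathrm{Ho}^b(k[\C_{\bG^F}(Q)\times\C_{\bL^F}(Q)^{\opp}])$ cannot induce a Rickard equivalence between the two nonzero block algebras $k\C_{\bL^F}(Q)c_{Q'}$ and $k\C_{\bG^F}(Q)b_Q$) is the natural way to make that implication explicit. No gaps.
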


 Note that for any $\ell$-subgroup $Q$ of $\bL^F$, if the complex $\Br_{\Delta Q}(\mathcal C)c_Q\simeq \G\Ga_c(\bY_{\C_{\bU}(Q)}^{\C_{\bG}(Q)},k)$	induces a Rickard equivalence between the block algebras $k\C_{\bG^F}(Q)b_Q$ and $k\C_{\bL^F}(Q)c_Q$, then by \cite[Prop.~3.9]{Ru20a}, $H^{\dim}_c(\bY_{\C_{\bU}(Q)}^{\C_{\bG}(Q)},k)c_Q$ induces a Morita equivalence between  $k\C_{\bG^F}(Q)b_Q$ and $k\C_{\bL^F}(Q)c_Q$.
From this, we can define $\Theta$ by setting $(Q,b_Q)$ to be the unique $b$-Brauer pair of $\bG^F$ such that $H^{\dim}_c(\bY_{\C_{\bU}(Q)}^{\C_{\bG}(Q)},k)c_Q$ induces a Morita equivalence between  $k\C_{\bG^F}(Q)b_Q$ and $k\C_{\bL^F}(Q)c_Q$.

 \begin{rmk}\label{remark:Brauer-cat-splendid}
 We fix a defect group $D$ of the block $c$ and a maximal $c$-Brauer pair $(D,c_D)$, then $D$ is a defect group of the block $b$.
 	In addition, there exists a $b$-Brauer pair $(D,b_D)$ such that if $(Q,c_Q)\le (D,c_D)$ is a $c$-Brauer pair then the $b$-Brauer pair  $(Q,b_Q)\le (D,b_D)$ is just the (unique) $b$-Brauer pair given as in the above paragraph. 	
Then the restriction of
the map $\Theta: (Q,c_Q)\mapsto (Q,b_Q)$ to $\cF_{(D,c_D)}(\bL^F,c)$ induces an isomorphism between the fusion systems $\cF_{(D,c_D)}(\bL^F,c)$ and $\cF_{(D,b_D)}(\bG^F,b)$. 
Also, one has  (see \cite[Cor.~1.6]{Ru20a}) $$\N_{\bL^F}(Q,c_Q)/\C_{\bL^F}(Q)\cong \N_{\bG^F}(Q,b_Q)/\C_{\bG^F}(Q).$$

 We recall the equivalence relations (which are denoted by ``$\sim$") on the sets $\cF_{(D,c_D)}(\bL^F,c)$ and $\cF_{(D,b_D)}(\bG^F,b)$ as in Remark \ref{remark:wei-fusion-system}. 
If $\mathcal R$ is a set of subgroups of $D$ such that
$\{ (Q,c_Q)\mid  Q\in \mathcal R,\ (Q,c_Q)\le (D,c_D) \}$ is a complete set of representatives of the equivalence classes of $\cF_{(D,c_D)}(\bL^F,c)/\!\!\sim$, then 
$\{ (Q,b_Q)\mid Q\in \mathcal R,\ (Q,b_Q)\le (D,b_D) \}$ is a complete set of representatives of the equivalence classes of $\cF_{(D,b_D)}(\bG^F,b)/\!\!\sim$.
 \end{rmk}

 We denote by 
 $\IBr(\bG^F,e_{s}^{\bG})$ the union of the  $\IBr(b)$, where $b$ runs through the blocks of $\bG^F$  in $s$.
 We denote by $R_{\bL}^{\bG}$ the bijection from $\IBr(\bL^F,e_{s}^{\bL})$ to $\IBr(\bG^F,e_{s}^{\bG})$ induced by the bimodule $H^{\dim(\bY^{\bG}_{\bU})}(C)$ as in Theorem \ref{thm:equ-blocks}.

 Let $\tbG$ be another connected reductive group defined over $\overline{\F}_p$ , with Steinberg map denoted by $\widetilde F:\tbG\to\tbG$.
 Assume that there is a homomorphism $\iota:\bG\to\tbG$ of algebraic groups such that $\iota\circ F=\widetilde F\circ \iota$.
Recall that $\iota:\bG\to\tbG$ is a \emph{regular embedding} if $\iota(\bG)$ is a closed subgroup of $\tbG$,
 $\iota$ is an isomorphism of $\bG$ with $\iota(\bG)$, $Z(\tbG)$ is connected, and $\tbG$ and $\iota(\bG)$ have the same derived subgroup.
  In this way, we also denote $\widetilde F$ by $F$.
  
A regular embedding $\iota:\bG\hookrightarrow\tbG$ induces a surjection $\iota^*:\tbG^*\twoheadrightarrow \bG^*$ (see for example \cite[\S1.7]{GM20}). 
Let $s$ be a semisimple $\ell'$-element of ${\bG^*}^F$ and $b$ a block of $\bG^F$ in $s$.
If $\tilde b$ is a  block of $\tbG^F$ covering $b$, then by \cite[Prop.~2.6.16]{GM20} there exists a semisimple $\ell'$-element $\ts$ of $(\tbG^*)^F$ such that $\iota^*(\ts)=s$ and $\tilde b$ is in $\ts$.

\begin{lem}\label{lem:equ-ind-mul}
	Suppose that $\iota:\bG\hookrightarrow\tbG$ is a regular embedding.
	Let $\ts$ be a semisimple $\ell'$-element of $(\tbG^*)^F$ and $s=\iota^*(\ts)$.
	\begin{enumerate}[\rm(i)]
		\item $\Ind^{\tbG^F}_{\bG^F}\circ R^{\bG}_{\bL}(\psi)=R^{\tbG}_{\tbL}\circ \Ind^{\tbL^F}_{\bL^F}(\psi)$ for every $\psi\in\IBr(\bL^F,e_s^{\bL^F})$.
		\item $\la R^{\tbG}_{\tbL}(\tpsi)=R^{\tbG}_{\tbL}(\Res^{\tbG^F}_{\tbL^F}(\la)\tpsi)$ for every $\tpsi\in\IBr(\tbL^F,e_{\ts}^{\tbL^F})$ and $\la\in\IBr(\tbG^F/\bG^F)$.
	\end{enumerate}
\end{lem}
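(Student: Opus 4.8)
The two statements are compatibility assertions for the bijection $R^{\bG}_{\bL}$ of Theorem~\ref{thm:equ-blocks} under a regular embedding, and both should be derived from the corresponding statements about the bimodule $H^{\dim(\bY^{\bG}_{\bU})}(\mathcal C)$ that induces the Morita equivalence. The key point is that the Deligne--Lusztig variety is insensitive to the central torus: since $\iota\colon\bG\hookrightarrow\tbG$ is a regular embedding, $\bG$ and $\tbG$ share the same derived subgroup, $\bU$ is the same unipotent radical inside both $\bP$ and $\widetilde{\bP}:=\bU\rtimes\tbL$, and one has a $\bG^F\times(\bL^F)^{\opp}$-equivariant identification $\bY^{\bG}_{\bU}=\bY^{\tbG}_{\bU}$ of varieties (the cosets $g\bU$ with $g^{-1}F(g)\in\bU\cdot F(\bU)$ are the same whether $g$ ranges over $\bG$ or $\tbG$, because $\bU\subseteq\bG$). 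Consequently $\G\Ga_c(\bY^{\bG}_{\bU},\cO)$, viewed as an $\cO\tbG^F$-$\cO\tbL^F$-bimodule, is obtained from the $\cO\bG^F$-$\cO\bL^F$-bimodule version by tensoring up along the inclusions of group algebras, and in particular
$$
H^{\dim}_c(\bY^{\tbG}_{\bU},\cO)\;\cong\;\cO\tbG^F\otimes_{\cO\bG^F}H^{\dim}_c(\bY^{\bG}_{\bU},\cO)
$$
as $\cO\tbG^F$-$\cO\bL^F$-bimodules (and symmetrically on the right by $\tbL^F$), after applying the appropriate idempotents $e_s$, $e_{\ts}$. This is essentially \cite[(1.7.7)]{GM20} or the Lusztig-induction functoriality of \cite{BR03}; I would cite the transitivity of Lusztig induction together with the regular-embedding compatibility in the connected case.

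For part~(i): applying $\Ind^{\tbG^F}_{\bG^F}=\cO\tbG^F\otimes_{\cO\bG^F}-$ to $R^{\bG}_{\bL}(\psi)=H^{\dim}_c(\bY^{\bG}_{\bU},\cO)e_s^{\bL^F}\otimes_{\cO\bL^F}\psi$ and using the bimodule identification above, together with $\ts$ restricting to $s$ so that $e_s^{\bL^F}$ is a sum of $\tbL^F$-conjugates of $e_{\ts}^{\tbL^F}$ (hence $\Ind^{\tbL^F}_{\bL^F}$ of a module in $e_s^{\bL^F}$ lands in $e_{\ts}^{\tbL^F}$), one rewrites $\cO\tbG^F\otimes_{\cO\bG^F}\bigl(M\otimes_{\cO\bL^F}\psi\bigr)$ as $\bigl(\cO\tbG^F\otimes_{\cO\bG^F}M\bigr)\otimes_{\cO\tbL^F}\bigl(\cO\tbL^F\otimes_{\cO\bL^F}\psi\bigr)=R^{\tbG}_{\tbL}(\Ind^{\tbL^F}_{\bL^F}\psi)$ by associativity of tensor product and the bimodule identity. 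Here I need only that these are genuine modules (not virtual), which is the content of Theorem~\ref{thm:equ-blocks}, so there is no sign or multiplicity bookkeeping. For part~(ii): a linear character $\la\in\IBr(\tbG^F/\bG^F)$ acts on $\tbG^F$-modules by twisting, and by Clifford theory $\la\cdot(-)\cong\cO\tbG^F_\la\otimes_{\cO\tbG^F}(-)$ where $\tbG^F_\la$ denotes the module $\cO\tbG^F$ with twisted left action; since $\bG^F\le\ker(\la)$, twisting commutes with the bimodule $\cO\tbG^F\otimes_{\cO\bG^F}M\otimes_{\cO\bG^F}(\text{resp. right})$ up to replacing $\la$ on the $\tbG^F$ side by $\Res^{\tbG^F}_{\tbL^F}(\la)$ on the $\tbL^F$ side --- concretely, for $m\in M$, $g\in\tbG^F$, $h\in\tbL^F$ the class $g\otimes m\otimes h$ is the same in the twisted and untwisted tensor products after absorbing $\la(g)$, and $\la(g)=\la(g')\Res(\la)(h)$ when $g=g'h$ modulo $\bG^F$. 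Tracking this through gives $\la\cdot R^{\tbG}_{\tbL}(\tpsi)=R^{\tbG}_{\tbL}(\Res^{\tbG^F}_{\tbL^F}(\la)\cdot\tpsi)$. Alternatively, and perhaps more cleanly, (ii) follows from the well-known analogous statement for Lusztig induction of ordinary characters (e.g. \cite[Prop.~1.7.15]{GM20}) by reducing modulo $\ell$ through a basic set argument, but the bimodule argument is self-contained.

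The main obstacle is making the bimodule identification $H^{\dim}_c(\bY^{\tbG}_{\bU},\cO)e_s\cong\cO\tbG^F\otimes_{\cO\bG^F}H^{\dim}_c(\bY^{\bG}_{\bU},\cO)e_s$ (and its right-hand analogue) precise and equivariant, including checking that taking the top cohomology module commutes with $\cO\tbG^F\otimes_{\cO\bG^F}-$ --- this uses that $\cO\tbG^F$ is free, hence flat, over $\cO\bG^F$, so the tensor is exact and commutes with cohomology, but one must also confirm the complex $\G\Ga_c$ behaves well, which is \cite[2.C]{BDR17} combined with the variety identification. Once this is in place, everything else is formal manipulation of tensor products of bimodules, so I would keep the write-up short: state the variety identification as a lemma (citing Lemma~\ref{lem:action-lie} for the equivariance and \cite{BR03,GM20} for the idea), then deduce (i) and (ii) in a few lines each.
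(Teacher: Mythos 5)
Your bimodule-theoretic route is genuinely different from what the paper does, and the two are worth contrasting. The paper's proof is two sentences: it cites the classical ordinary-character versions of both identities (\cite[Cor.~3.3.25 and Prop.~3.3.26]{GM20}), and then observes that since the Morita equivalence of Theorem~\ref{thm:equ-blocks} also gives a bijection on $\Irr$ compatible with the decomposition map, and $\{\chi^0\mid\chi\in\Irr(b)\}$ spans $\mathbb{Q}\IBr(b)$ with all maps linear, the Brauer-character statements follow. This is precisely the ``reduce mod $\ell$ through a basic set'' alternative you mention in passing for~(ii); the paper uses it for both parts, avoiding any new bimodule computation. Your argument instead re-derives the compatibility directly from the bimodules, which is more self-contained but also more delicate.

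There is, however, a concrete error in your key reduction step. You assert that $\bY^{\bG}_{\bU}=\bY^{\tbG}_{\bU}$ because ``the cosets $g\bU$ with $g^{-1}F(g)\in\bU\cdot F(\bU)$ are the same whether $g$ ranges over $\bG$ or $\tbG$, because $\bU\subseteq\bG$.'' This is false: writing $g=g_0z$ with $g_0\in\bG$, $z\in Z(\tbG)$, the condition $g^{-1}F(g)=z^{-1}F(z)\,g_0^{-1}F(g_0)\in\bU F(\bU)$ forces $z^{-1}F(z)\in Z(\tbG)\cap\bG=Z(\bG)$, but $z$ itself need not lie in $\bG$, so $g\bU$ is a coset in $\tbG/\bU$ that need not belong to $\bG/\bU$. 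The Deligne--Lusztig variety for $\tbG$ is strictly larger. The correct relationship is the one the paper invokes elsewhere (in the proof of Proposition~\ref{prop:iBAW-levi}), namely \cite[Prop.~1.1]{BR06}:
\begin{equation*}
\Ind^{\tbG^F\ti (\tbL^F)^{\opp}}_{(\bG^F\ti (\bL^F)^{\opp})\Delta\tbL^F}\G\Ga_c(\bY_{\bU}^{\bG},\cO)\,e_s^{\bL^F}\;\simeq\;\G\Ga_c(\bY_{\bU}^{\tbG},\cO)\,e_s^{\bL^F},
\end{equation*}
and note this is induction from $(\bG^F\ti(\bL^F)^{\opp})\Delta\tbL^F$, not two-sided tensoring with $\cO\tbG^F$ and $\cO\tbL^F$. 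Your claimed isomorphism $H^{\dim}_c(\bY^{\tbG}_{\bU},\cO)\cong\cO\tbG^F\otimes_{\cO\bG^F}H^{\dim}_c(\bY^{\bG}_{\bU},\cO)$ does hold as $\cO\tbG^F$-$\cO\bL^F$-bimodules (i.e.\ after restricting the right action to $\bL^F$), but it has to be extracted from the Bonnaf\'e--Rouquier statement above by a Mackey-type argument rather than from a spurious equality of varieties. Once that is corrected, the tensor-product manipulations in your (i) go through, and your handling of (ii) by a central-character twist is fine. Given the care this requires, the paper's basic-set shortcut is both shorter and safer; I would recommend it unless you need the explicit bimodule statement later.
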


\begin{proof}
	Similar results for ordinary characters are classical (see, \emph{e.g.}, Corollary 3.3.25 and Proposition 3.3.26  of \cite{GM20}).
	Then the assertion follows directly since 
for every block $b$ in $s$	the set $\{\chi^0\mid\chi\in\Irr(b)\}$ spans the space
$\mathbb Q\IBr(b)$ and all maps are linear.		
\end{proof}	

We recall that an element $t$ in $\bG^*$ is called \emph{quasi-isolated} if the centralizer $\C_{\bG^*}(t)$ is not contained in a proper Levi subgroup of $\bG^*$.
Following Definition 3.1 of \cite{Ru20b}, we say that an element $t\in{\bG^*}^F$ is \emph{strictly quasi-isolated} if $\C_{{\bG^*}^F}(t)\C^\circ_{\bG^*}(t)$ is not contained in a proper Levi subgroup of $\bG^*$.
An $\ell$-block of $\bG^F$ is called a \emph{quasi-isolated block} (resp. \emph{strictly quasi-isolated}) if it is in some quasi-isolated (resp. strictly quasi-isolated) semisimple $\ell'$-element.

\subsection{Considering the automorphisms}
Let $s\in{\bG^*}^F$ be a semisimple $\ell'$-element such that $\C_{{\bG^*}^F}(s)\C^\circ_{\bG^*}(s)$   is contained in some $F$-stable Levi subgroup $\bL^*$ of $\bG^*$.
Suppose that $F_0:\tbG\to\tbG$ is a Frobenius endomorphism and $\sigma:\tbG\to\tbG$  is a bijective morphism
such that $F$ is a power of $F_0$ and  $F_0\circ \sigma=\sigma\circ F_0$ as morphisms of $\tbG$.
Let $\cA:=\langle F_0,\sigma\rangle$ and we assume further that $\cA$ stabilizes $e_s^{\bG^F}$ and there is a Levi subgroup $\bL$ of $\bG$ in duality with $\bL^*$ such that $\cA$ stabilizes $\bL$ and $e_s^{\bL^F}$.

 \begin{lem}\label{lem:equivariant-Br-pairs}
 	In this situation,
	the map $\Theta$   from Lemma \ref{lem:inj-Bra-cat} is $(\tbL^F\cA)_c$-equivariant.
\end{lem}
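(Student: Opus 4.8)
The plan is to show that the construction of $\Theta$ is natural with respect to the action of $(\tbL^F\cA)_c$, using that both the complex $\mathcal C$ and its Brauer-quotient behaviour are compatible with this action. First I would recall precisely how $\Theta$ was defined: given a $c$-Brauer pair $(Q,c_Q)$, the pair $(Q,b_Q)$ is the unique $b$-Brauer pair such that $b_Q\Br_{\Delta Q}(\mathcal C)c_Q$ induces a Rickard equivalence between $k\C_{\bL^F}(Q)c_Q$ and $k\C_{\bG^F}(Q)b_Q$, and for every other $b$-Brauer pair $(Q,b_Q')$ the complex $b_Q'\Br_{\Delta Q}(\mathcal C)c_Q$ is homotopy-zero. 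So to prove equivariance it suffices to show that for $x\in(\tbL^F\cA)_c$ we have $\Theta((Q,c_Q)^x)=\Theta(Q,c_Q)^x$, i.e. that $(Q^x,b_Q^x)$ satisfies the defining uniqueness property for $(Q^x,(c_Q)^x)$.

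The key step is to transport the defining Rickard equivalence by the action of $x$. An element of $\tbL^F$ acts by conjugation on $\bL^F$ (it need not normalise $\bL^F$ literally, but $\bL^F\trianglelefteq\tbL^F$, so conjugation makes sense), and an element of $\cA=\langle F_0,\sigma\rangle$ acts on $\bG$, $\bL$ as a bijective morphism commuting with $F$; by hypothesis $\cA$ stabilises $\bL$, $e_s^{\bG^F}$ and $e_s^{\bL^F}$, and $x$ stabilises $c$ (and hence, via the Morita correspondence of Theorem~\ref{thm:equ-blocks}, also $b$). Using Lemma~\ref{lem:action-lie} applied to $\sigma$ (and the trivial compatibility of $\tbL^F$-conjugation with the Deligne--Lusztig variety $\bY_{\bU}^{\bG}$, which is a standard fact, cf.\ the way $\Ru20a$ handles automorphisms), one gets an isomorphism in the appropriate homotopy category identifying $\mathcal C^x$ with a complex of the same shape built from $\bY_{\sigma(\bU)}^{\bG}$ or a $\tbL^F$-conjugate of $\bY_{\bU}^{\bG}$; since the Morita/Rickard equivalence and the idempotents $e_s$ are $\cA$- and $\tbL^F$-stable, this identifies $\mathcal C^x$ with $\mathcal C$ up to the relevant equivalence. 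Then applying $\Br_{\Delta Q^x}$ and using that the Brauer functor is itself equivariant (conjugation by $x$ intertwines $\Br_{\Delta Q}$ with $\Br_{\Delta Q^x}$), one obtains $b_Q^x\,\Br_{\Delta Q^x}(\mathcal C)\,(c_Q)^x$ inducing a Rickard equivalence between $k\C_{\bL^F}(Q^x)(c_Q)^x$ and $k\C_{\bG^F}(Q^x)b_Q^x$, and the analogous vanishing for the other Brauer pairs. By the uniqueness in the definition of $\Theta$, this forces $\Theta((Q,c_Q)^x)=(Q^x,b_Q^x)=\Theta(Q,c_Q)^x$.

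The main obstacle I expect is bookkeeping around the action of $\cA$ on $\mathcal C$: Lemma~\ref{lem:action-lie} produces an isomorphism $\sigma^*\colon\G\Gamma_c(\bY_{\sigma(\bU)}^{\bG},\Lambda)\to{}^\sigma\G\Gamma_c(\bY_{\bU}^{\bG},\Lambda)^\sigma$, so one must check that conjugating by $\sigma$ (resp.\ by an element of $\tbL^F$) really sends the reduced complex $\mathcal C=\G\Ga_c(\bY_{\bU}^{\bG},\cO)^{\mathrm{red}}e_s^{\bL^F}$ to an isomorphic complex playing the same role, and that this isomorphism is compatible with the passage to Brauer quotients $\Br_{\Delta Q}$ — i.e.\ that one is allowed to feed the equivariance of Lemma~\ref{lem:action-lie} through Rouquier's isomorphism $\G\Ga_c(\bX^Q,k)\simeq\Br_Q(\G\Ga_c(\bX,\Lambda))$ used to set up $\Theta$ in the first place. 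This is essentially the content of $\Ru20a$'s treatment of automorphisms, so I would cite that work and only spell out the compatibility of the two functorialities; once that is in place, the equivariance of $\Theta$ follows formally from the uniqueness statement.
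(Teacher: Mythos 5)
Your proposal is correct and follows essentially the same route as the paper: the paper likewise invokes Lemma \ref{lem:action-lie} to transport the complex $b_Q\Br_{\Delta Q}(\mathcal C)c_Q$ under $\tau\in\tbL^F\cA$ (assuming without loss of generality that $\tau$ stabilizes $Q$), observes that the vanishing/non-vanishing in $\mathrm{Ho}^b(k[\C_{\bG^F}(Q)\times\C_{\bL^F}(Q)^{\opp}])$ is preserved, and concludes by the defining uniqueness of $\Theta$. Your extra care in tracking $Q^x$ and the compatibility of the Brauer functor with conjugation just spells out what the paper's ``without loss of generality'' reduction implicitly uses.
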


\begin{proof}
	Let $(Q,c_Q)$ be a $c$-Brauer pair of $\bL^F$ and $(Q,b_Q)=\Theta(Q,c_Q)$.
Then	$H^{\dim}_c(\bY_{\C_{\bU}(Q)}^{\C_{\bG}(Q)},k)c_Q$ induces a Morita equivalence between the block algebras  $k\C_{\bG^F}(Q)b_Q$ and $k\C_{\bL^F}(Q)c_Q$.
	
Let $\tau \in(\tbL^F\cA)_c$. 
Then
${}^\tau H^{\dim}_c(\bY_{\C_{\bU}(Q)}^{\C_{\bG}(Q)},k)^\tau\tau(c_Q)$ induces a Morita equivalence between the block algebras $k\C_{\bG^F}(\tau(Q))\tau(b_Q)$ and $k\C_{\bL^F}(\tau(Q))\tau(c_Q)$.
By Lemma \ref{lem:action-lie} one has that 
$\tau$ induces an isomorphism $$\tau^*:\G\Gamma_c(\bY_{\tau(\bU)}^{\bG},\Lambda)\to {}^\tau \G\Gamma_c(\bY_{\bU}^{\bG},\Lambda)^\tau$$ in $\mathrm{Ho}^b(\Lambda[\bG^F\ti(\bL^F)^{\opp}])$.
Applying the Brauer functor $\Br_{\Delta \tau(Q)}$ and taking the cohomology modules, one has $$H^{\dim}_c(\bY_{\C_{\tau(\bU)}(\tau(Q))}^{\C_{\bG}(\tau(Q))},k)\tau(c_Q)\simeq {}^\tau H^{\dim}_c(\bY_{\C_{\bU}(Q)}^{\C_{\bG}(Q)},k)^\tau\tau(c_Q).$$ 
On the other hand, by  \cite[Thm.~2.27]{Ru20a}, $$H^{\dim}_c(\bY_{\C_{\tau(\bU)}(\tau(Q))}^{\C_{\bG}(\tau(Q))},k)\tau(c_Q)\simeq H^{\dim}_c(\bY_{\C_{\bU}(\tau(Q))}^{\C_{\bG}(\tau(Q))},k)\tau(c_Q).$$
Therefore, $H^{\dim}_c(\bY_{\C_{\bU}(\tau(Q))}^{\C_{\bG}(\tau(Q))},k)\tau(c_Q)$ 
induces a Morita equivalence between the block algebras $k\C_{\bG^F}(\tau(Q))\tau(b_Q)$ and $k\C_{\bL^F}(\tau(Q))\tau(c_Q)$.
Thus according to the definition of $\Theta$, one has $$\Theta(\tau(Q),\tau(c_Q))=(\tau(Q),\tau(b_Q)).$$
This completes the proof.
\end{proof}

\begin{rmk}\label{remark:conj-Br-pairs-blocks}
	By Lemma \ref{lem:equivariant-Br-pairs},  $\Theta$ induces a bijection between the $\bL^F$-conjugacy classes of $\bBr(\bL^F,c)$ and  the $\bL^F$-conjugacy classes of $\Theta(\bBr(\bL^F,c))$. 
	By Remark \ref{remark:Brauer-cat-splendid}	,
	the number of  $\bL^F$-conjugacy classes of $\bBr(\bL^F,c)$ is equal to the number of $\bG^F$-conjugacy classes of $\bBr(\bG^F,b)$. 
	Therefore, if  $(Q,b_Q),(Q',b_{Q'})\in \Theta(\bBr(\bL^F,c))$ are two $b$-Brauer pairs,
	then $(Q,b_Q)$ and $(Q',b_{Q'})$ are $\bG^F$-conjugate if and only if they are $\bL^F$-conjugate.
\end{rmk}

For a block $C_Q$ of $\N_{\bL^F}(Q)$ with $(C_Q)^{\bL^F}=c$, 
we let $c_Q$ be a block of $\C_{\bL^F}(Q)$ such that $C_Q=\Tr^{\N_{\bL^F}(Q)}_{\N_{\bL^F}(Q,c_Q)}(c_Q)$.
Let $(Q,b_Q)=\Theta(Q,c_Q)$ and $B_Q=\Tr^{\N_{\bL^F}(Q)}_{\N_{\bL^F}(Q,c_Q)}(b_Q)$.

\begin{lem}\label{lem:inj-Nor-bl}
	The map $\Phi$ from $\bNBr(\bL^F,c)$ to $\bNBr(\bG^F,b)$ given by $(Q,C_Q)\mapsto (Q,B_Q)$  is injective and $(\tbL^F\cA)_c$-equivariant.
\end{lem}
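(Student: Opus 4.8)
The plan is to deduce the two properties of $\Phi$ — injectivity and $(\tbL^F\cA)_c$-equivariance — from the corresponding properties of $\Theta$ established in Lemmas \ref{lem:inj-Bra-cat} and \ref{lem:equivariant-Br-pairs}, together with the fact (Remark \ref{remark:Brauer-cat-splendid}) that $\Theta$ identifies the relevant normalizer quotients $\N_{\bL^F}(Q,c_Q)/\C_{\bL^F}(Q)$ and $\N_{\bG^F}(Q,b_Q)/\C_{\bG^F}(Q)$. The first thing to check is that $\Phi$ is well-defined: I need that $B_Q=\Tr^{\N_{\bG^F}(Q)}_{\N_{\bG^F}(Q,b_Q)}(b_Q)$ really is a block of $\N_{\bG^F}(Q)$ with $(B_Q)^{\bG^F}=b$, and that it is independent of the choice of $c_Q$. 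For the Brauer correspondence statement, I would use that $b_Q\,\Br_{\Delta Q}(\mathcal C)\,c_Q$ induces a Rickard equivalence between $k\C_{\bG^F}(Q)b_Q$ and $k\C_{\bL^F}(Q)c_Q$, which in particular forces $\bl((C_Q)^{\bL^F})$ and $\bl((B_Q)^{\bG^F})$ to correspond under the Morita equivalence of Theorem \ref{thm:equ-blocks}, hence $(B_Q)^{\bG^F}=b$ since $(C_Q)^{\bL^F}=c$ and $c$ corresponds to $b$. Independence of the choice of $c_Q$ follows because any two choices are $\N_{\bL^F}(Q)$-conjugate (all blocks of $\C_{\bL^F}(Q)$ covered by $C_Q$ are conjugate), $\Theta$ is $(\tbL^F\cA)_c$-equivariant hence in particular $\N_{\bL^F}(Q)$-equivariant by Lemma \ref{lem:equivariant-Br-pairs}, and the trace $\Tr^{\N_{\bL^F}(Q)}_{\N_{\bL^F}(Q,b_Q)}(b_Q)$ is unchanged under replacing $b_Q$ by an $\N_{\bL^F}(Q)$-conjugate.

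**Injectivity.** Suppose $\Phi(Q,C_Q)=\Phi(Q',C_{Q'})$, so $Q=Q'$ and $B_Q=B_{Q'}$ as blocks of $\N_{\bG^F}(Q)$. I want $C_Q=C_{Q'}$. Pick block idempotents $c_Q,c_{Q'}$ of $\C_{\bL^F}(Q)$ underlying $C_Q,C_{Q'}$ and set $(Q,b_Q)=\Theta(Q,c_Q)$, $(Q,b_{Q'})=\Theta(Q,c_{Q'})$. Since $B_Q=B_{Q'}$, the $\C_{\bG^F}(Q)$-blocks $b_Q$ and $b_{Q'}$ are covered by the same block of $\N_{\bG^F}(Q)$, hence $\N_{\bG^F}(Q)$-conjugate; by the equivariance of $\Theta$ (Lemma \ref{lem:equivariant-Br-pairs}) and the isomorphism $\N_{\bL^F}(Q,c_Q)/\C_{\bL^F}(Q)\cong \N_{\bG^F}(Q,b_Q)/\C_{\bG^F}(Q)$ of Remark \ref{remark:Brauer-cat-splendid} — which in fact forces $\N_{\bL^F}(Q)$ and $\N_{\bG^F}(Q)$ to have the same orbits on the fibres of $\Theta$ — it follows that $c_Q$ and $c_{Q'}$ are already $\N_{\bL^F}(Q)$-conjugate (here one uses injectivity of $\Theta$ from Lemma \ref{lem:inj-Bra-cat} to transport the conjugating relation back). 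Therefore $C_Q$ and $C_{Q'}$ are covered by a common block of $\N_{\bL^F}(Q)$, and being blocks of $\N_{\bL^F}(Q)$ themselves forces $C_Q=C_{Q'}$.

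**Equivariance.** For $\tau\in(\tbL^F\cA)_c$, Lemma \ref{lem:equivariant-Br-pairs} gives $\Theta(Q,\tau(c_Q))=(Q,\tau(b_Q))$ after conjugating so that $\tau$ stabilizes $Q$. Since $\tau$ is an automorphism of the relevant groups commuting appropriately, $\tau\bigl(\Tr^{\N_{\bL^F}(Q)}_{\N_{\bL^F}(Q,c_Q)}(c_Q)\bigr)=\Tr^{\N_{\bL^F}(Q)}_{\N_{\bL^F}(Q,\tau(c_Q))}(\tau(c_Q))$, i.e. $\tau(C_Q)$ is the block of $\N_{\bL^F}(Q)$ with underlying $\C_{\bL^F}(Q)$-block $\tau(c_Q)$; likewise $\tau(B_Q)$ is the block of $\N_{\bG^F}(Q)$ attached to $(Q,\tau(b_Q))=\Theta(Q,\tau(c_Q))$. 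Hence $\Phi(Q,\tau(C_Q))=(Q,\tau(B_Q))=\tau\bigl(\Phi(Q,C_Q)\bigr)$, as required.

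**Main obstacle.** The bookkeeping with covering blocks and relative traces is routine, so the one point requiring genuine care is the well-definedness of $\Phi$ on block idempotents, specifically the claim $(B_Q)^{\bG^F}=b$: this rests on pushing the Rickard equivalence $b_Q\,\Br_{\Delta Q}(\mathcal C)\,c_Q$ between centralizer algebras up to normalizers and matching it with the Bonnaf\'e--Dat--Rouquier equivalence of Theorem \ref{thm:equ-blocks} at the level of Brauer correspondents, using the uniqueness clause of \cite[Thm.~1.7]{Ha99} and the compatibility of $\Theta$ with the fusion-system isomorphism in Remark \ref{remark:Brauer-cat-splendid}. Everything else is a transport of structure along $\Theta$.
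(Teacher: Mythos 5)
Your overall architecture matches the paper's: reduce both well-definedness and injectivity of $\Phi$ to a conjugacy statement for the underlying Brauer pairs, then transport it along $\Theta$, and derive equivariance from Lemma~\ref{lem:equivariant-Br-pairs}. The paper compresses everything into the single sentence that it suffices to show $c_Q$ and $c_Q'$ are $\N_{\bL^F}(Q)$-conjugate if and only if $b_Q$ and $b_Q'$ are $\N_{\bG^F}(Q)$-conjugate, and then cites Remark~\ref{remark:conj-Br-pairs-blocks} for this; your expanded treatment of $(B_Q)^{\bG^F}=b$ and of independence of the choice of $c_Q$ is fine and just fills in what the paper leaves implicit.

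There is, however, an imprecision in how you justify the crucial ``if'' direction (the injectivity half). You assert that the stabilizer isomorphism $\N_{\bL^F}(Q,c_Q)/\C_{\bL^F}(Q)\cong\N_{\bG^F}(Q,b_Q)/\C_{\bG^F}(Q)$ from Remark~\ref{remark:Brauer-cat-splendid} ``in fact forces $\N_{\bL^F}(Q)$ and $\N_{\bG^F}(Q)$ to have the same orbits,'' and that injectivity of $\Theta$ then lets you ``transport the conjugating relation back.'' By itself, an isomorphism of stabilizers at a single pair does not prevent a $\bG^F$-orbit from merging two distinct $\bL^F$-orbits; since $|\bG^F|/|\bL^F|$ can be large, the orbit under the bigger group can genuinely be bigger without contradicting equality of stabilizer sizes, and injectivity plus equivariance of $\Theta$ only ensures that $\bL^F$-orbits map to $\bL^F$-orbits, not that distinct ones stay separated under $\bG^F$. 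What actually closes the argument is the global counting contained in the other half of Remark~\ref{remark:Brauer-cat-splendid}: the induced isomorphism of fusion systems $\cF_{(D,c_D)}(\bL^F,c)\cong\cF_{(D,b_D)}(\bG^F,b)$ shows that $\Theta$ carries a set of representatives for the $\bL^F$-classes to a set of representatives for the $\bG^F$-classes, so the number of classes agrees on both sides and merging is impossible. This is precisely what the paper's Remark~\ref{remark:conj-Br-pairs-blocks} records and what its proof cites. So the ingredient you need is in the remark you reference, but you are pointing at the wrong part of it; swap the appeal to the stabilizer isomorphism for the appeal to the statement about sets of representatives (equivalently, equality of class numbers), and the argument is complete.
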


\begin{proof}
	We first prove that $\Phi$ is well-defined and it suffices to show that for two $c$-Brauer pairs $(Q,c_Q)$ and $(Q,c_Q')$, 
	$c_Q$ and $c_Q'$ are $\N_{\bL^F}(Q)$-conjugate if and only if $b_Q$ and $b_Q'$ are $\N_{\bG^F}(Q)$-conjugate, where $(Q,b_Q)=\Theta(Q,c_Q)$ and  $(Q,b_Q')=\Theta(Q,c_Q')$.
	This follows by Remark \ref{remark:conj-Br-pairs-blocks}.	 
	By Lemma \ref{lem:equivariant-Br-pairs}, $\Phi$ is $\tbL^F\cA$-equivariant. 
\end{proof}

\begin{rmk}\label{remark:conj-pairs-blocks}
	Let $(Q,B_Q),(Q',B_{Q'})\in \Phi(\bNBr(\bL^F,c))$. 
	Then by Lemma \ref{lem:inj-Nor-bl} and Remark \ref{remark:conj-Br-pairs-blocks}, $(Q,B_Q)$ and $(Q',B_{Q'})$ are $\bG^F$-conjugate if and only if they are $\bL^F$-conjugate.
	In addition, $\Phi$ induces a bijection from the  $\bL^F$-conjugacy classes of $\bNBr(\bL^F,c)$ is equal to the $\bG^F$-conjugacy classes of $\bNBr(\bG^F,b)$ which is $(\tbL^F\cA)_c$-equivariant. 
\end{rmk}

\subsection{Morita equivalence for block algebras of quotient groups}

	Let $(Q,C_Q)\in \bNBr(\bL^F,c)$ and $(Q,B_Q)=\Phi(Q,C_Q)$.  Then by \cite[Thm.~3.10]{Ru20a}, we have

\begin{thm}[Ruhstorfer]\label{thm:equ-normaliser}
 The bimodule $H_c^{\dim}(\bY^{\N_{\bG}(Q)}_{\C_{\bU}(Q)},\Lambda)C_Q$  induces a Morita equivalence between $\Lambda \N_{\bL^F}(Q)C_Q$ and $\Lambda \N_{\bG^F}(Q)B_Q$.
\end{thm}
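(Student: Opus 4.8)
The plan is to obtain Theorem~\ref{thm:equ-normaliser} from the splendid Rickard equivalence $\mathcal C=\G\Ga_c(\bY^{\bG}_{\bU},\cO)^{\rm{red}}e_s^{\bL^F}$ of Theorem~\ref{thm:equ-blocks} by applying the Brauer functor at the diagonal subgroup $\Delta Q\le\bG^F\times(\bL^F)^{\opp}$ and identifying the resulting fixed-point complex geometrically. Since $Q\le\bL^F$ is an $\ell$-subgroup and $\ell\ne p$, the group $Q$ is an $F$-stable $p'$-group, and after replacing $\bP=\bU\rtimes\bL$ inside its $\bG^F$-class we may assume that $Q$ normalises $\bP$ and $\bL$; thus $\Delta Q$ acts on $\bY^{\bG}_{\bU}$ by conjugation $g\bU\mapsto xgx^{-1}\bU$ for $x\in Q$. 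The crucial geometric point is the identification
$$(\bY^{\bG}_{\bU})^{\Delta Q}\ \cong\ \bY^{\N_{\bG}(Q)}_{\C_{\bU}(Q)}$$
of varieties carrying a left $\N_{\bG^F}(Q)$- and right $\N_{\bL^F}(Q)^{\opp}$-action. To prove it one argues: $g\bU$ is $\Delta Q$-fixed if and only if $g^{-1}Qg\subseteq Q\bU=\bU\rtimes Q$, so $g^{-1}Qg$ is a $p'$-complement to $\bU$ in $Q\bU$; since all such complements are $\bU$-conjugate (Schur--Zassenhaus for unipotent groups, i.e.\ $H^1(Q,\bU)=1$), one may modify $g$ within $g\bU$ so that $g\in\N_{\bG}(Q)$. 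Moreover $\bU\cap\N_{\bG}(Q)=\C_{\bU}(Q)$: if $u\in\bU\cap\N_{\bG}(Q)$ and $q\in Q$, then $[u,q]=u^{-1}(q^{-1}uq)\in\bU$ while $[u,q]=(u^{-1}q^{-1}u)q\in Q$, so $[u,q]\in\bU\cap Q=1$. Using that $\N_{\bP}(Q)=\C_{\bU}(Q)\rtimes\N_{\bL}(Q)$ is a Levi decomposition in the (possibly disconnected) reductive group $\N_{\bG}(Q)$, one then checks that the condition $g^{-1}F(g)\in\bU\cdot F(\bU)$ restricts to the condition defining $\bY^{\N_{\bG}(Q)}_{\C_{\bU}(Q)}$.

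Granting this, \cite[Thm.~4.2]{Ri94} gives an isomorphism $\G\Ga_c(\bY^{\N_{\bG}(Q)}_{\C_{\bU}(Q)},k)\xrightarrow{\ \sim\ }\Br_{\Delta Q}(\G\Ga_c(\bY^{\bG}_{\bU},\Lambda))$ in $\mathrm{Ho}^b(k[\N_{\bG^F}(Q)\times\N_{\bL^F}(Q)^{\opp}])$, and truncating by the block idempotents (using $\Br_{\Delta Q}(\mathcal C\,C_Q)=\Br_{\Delta Q}(\mathcal C)\,\mathrm{br}_{\Delta Q}(C_Q)$ for the $\ell$-permutation complex $\mathcal C$) reduces the theorem to a Bonnaf\'e--Dat--Rouquier-type statement for the (possibly disconnected) reductive pair $\N_{\bL}(Q)\le\N_{\bG}(Q)$, which I would in turn reduce to the connected centraliser by means of the normal subgroup $\C_{\bG^F}(Q)\unlhd\N_{\bG^F}(Q)$. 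For the connected centraliser $\C_{\bL}(Q)\le\C_{\bG}(Q)$ this is the local Bonnaf\'e--Rouquier equivalence recalled before Lemma~\ref{lem:inj-Bra-cat}: $b_Q\Br_{\Delta Q}(\mathcal C)c_Q$ induces a Rickard equivalence between $k\C_{\bL^F}(Q)c_Q$ and $k\C_{\bG^F}(Q)b_Q$, and the vanishing of its cohomology outside the middle degree after truncation promotes this to a Morita equivalence given by $H^{\dim}_c(\bY^{\C_{\bG}(Q)}_{\C_{\bU}(Q)},\Lambda)c_Q$, exactly as in the passage from the first to the second assertion of Theorem~\ref{thm:equ-blocks}.

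It then remains to descend from the centralisers to the normalisers. Here I would use that $M:=H^{\dim}_c(\bY^{\N_{\bG}(Q)}_{\C_{\bU}(Q)},\Lambda)C_Q$ is, by construction, a $\Lambda\N_{\bG^F}(Q)B_Q$-$\Lambda\N_{\bL^F}(Q)C_Q$-bimodule whose restriction to $\C_{\bG^F}(Q)\times\C_{\bL^F}(Q)^{\opp}$ is a direct sum of $\N_{\bG^F}(Q)$-conjugates of the Morita bimodule of the previous paragraph, that it is graded over the common quotient $\N_{\bL^F}(Q,c_Q)/\C_{\bL^F}(Q)\cong\N_{\bG^F}(Q,b_Q)/\C_{\bG^F}(Q)$ of Remark~\ref{remark:Brauer-cat-splendid}, and that $C_Q=\Tr^{\N_{\bL^F}(Q)}_{\N_{\bL^F}(Q,c_Q)}(c_Q)$ and $B_Q=\Tr^{\N_{\bL^F}(Q)}_{\N_{\bL^F}(Q,c_Q)}(b_Q)$ are induced from the stabilisers. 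The standard gluing criterion for graded/equivariant block equivalences under a normal subgroup (cf.\ \cite{Pu99}) then promotes the centraliser Morita equivalence to the asserted Morita equivalence between $\Lambda\N_{\bL^F}(Q)C_Q$ and $\Lambda\N_{\bG^F}(Q)B_Q$ realised by $M$.

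The main obstacle, I expect, is the first part: carrying out the fixed-point identification honestly in the possibly disconnected setting and making sure the Deligne--Lusztig condition cuts out \emph{exactly} $\bY^{\N_{\bG}(Q)}_{\C_{\bU}(Q)}$ (and not a variety differing from it by a finite covering), that the $\ell$-permutation-complex formalism of \cite{BDR17} applies to $\N_{\bG}(Q)$, and that everything is compatible with the Steinberg map via Lemma~\ref{lem:action-lie}. A secondary, more routine difficulty is the bookkeeping of block idempotents in the $\N/\C$-grading needed for the descent. Once these are settled, the $(\tbL^F\cA)_c$-equivariance exploited later (Lemma~\ref{lem:equivariant-Br-pairs}) follows at once from Lemma~\ref{lem:action-lie}.
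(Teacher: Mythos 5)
The paper does not prove this theorem but attributes it to Ruhstorfer, citing \cite[Thm.~3.10]{Ru20a}; so the comparison has to be with that argument, as it is also reflected in the paper's own proof of Theorem~\ref{thm:morita-quotient}. Your fixed-point identification $(\bY^{\bG}_{\bU})^{\Delta Q}\cong\bY^{\N_{\bG}(Q)}_{\C_{\bU}(Q)}$ is incorrect: the $\Delta Q$-fixed points form the Deligne--Lusztig variety for the \emph{centraliser} $\C_{\bG}(Q)$, not for the normaliser. The equivalence you assert (``$g\bU$ is $\Delta Q$-fixed iff $g^{-1}Qg\subseteq Q\bU$'') holds in one direction only: the fixed-point condition is $g^{-1}qg\in q\bU$ for every $q\in Q$, so $g^{-1}Qg\subseteq Q\bU$ is necessary but not sufficient --- any $g\in\N_{\bG}(Q)\setminus\C_{\bG}(Q)$ has $g^{-1}Qg=Q\subseteq Q\bU$ and yet does not fix $g\bU$. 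After adjusting $g$ inside its $\bU$-coset to lie in $\N_{\bG}(Q)$ (which your $H^1(Q,\bU)=1$ argument does give), the fixed-point condition forces $g^{-1}qg\in Q\cap q\bU=\{q\}$, i.e.\ $g\in\C_{\bG}(Q)$. Thus $\Br_{\Delta Q}(\G\Ga_c(\bY^{\bG}_{\bU},\Lambda))\cong\G\Ga_c(\bY^{\C_{\bG}(Q)}_{\C_{\bU}(Q)},k)$, precisely as the paper records in the proof of Theorem~\ref{thm:morita-quotient}.

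Because of this, your plan is internally inconsistent: you first claim the Brauer construction already produces the normaliser bimodule, and then also propose to ``reduce to the connected centraliser'' and ``descend from centralisers to normalisers''. Only the descent is the content of the theorem. The Brauer construction combined with Harris's theorem gives the Morita equivalence between $k\C_{\bL^F}(Q)c_Q$ and $k\C_{\bG^F}(Q)b_Q$; the geometric ingredient you are missing is that the Deligne--Lusztig complex of the disconnected group $\N_{\bG}(Q)$ arises by \emph{induction} from the centraliser one,
$\Ind^{\N_{\bG^F}(Q)\times\N_{\bL^F}(Q)^{\opp}}_{\N_{\bG^F\times(\bL^F)^{\opp}}(\Delta Q)}\G\Ga_c(\bY^{\C_{\bG}(Q)}_{\C_{\bU}(Q)},k)\simeq\G\Ga_c(\bY^{\N_{\bG}(Q)}_{\C_{\bU}(Q)},k)$
(this is \cite[Prop.~3.7]{Ru20a}, invoked in the proof of Theorem~\ref{thm:morita-quotient}), after which the equivalence for the induced blocks $C_Q=\Tr^{\N_{\bL^F}(Q)}_{\N_{\bL^F}(Q,c_Q)}(c_Q)$ and $B_Q$ follows by a Marcus-type descent using the identification $\N_{\bL^F}(Q,c_Q)/\C_{\bL^F}(Q)\cong\N_{\bG^F}(Q,b_Q)/\C_{\bG^F}(Q)$ of Remark~\ref{remark:Brauer-cat-splendid}. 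Your final paragraph gestures at this descent, but the fixed-point error at the outset must be corrected before the outline becomes a proof.
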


In order to consider the Mortia equivalence between blocks of $\N_{\bL^F}(Q)/Q$ and $\N_{\bG^F}(Q)/Q$, we first recall a technical result from \cite{Ro98}.

\begin{lem}\label{lem:quotient}
	Let $Q$ be a common normal $\ell$-subgroup of finite groups $G$ and $H$, and let $e$, $f$ be central idempotents of $G$, $H$ respectively. 
	Let $S$ be an $\ell$-subgroup of $G\ti H^{\opp}$ such that $S\cap(1\times H^{\opp})=S\cap (G\times 1)=1$ and $Q\times Q^{\opp}\le (Q\times 1)S=(1\times Q^{\opp})S$. 
		Suppose that $\cC$ is a bounded complex of $(\Lambda Ge\otimes(\Lambda Hf)^{\opp})$-modules, each of which is a direct sum of some  direct summands of $\Ind_{S}^{G\ti H^{\opp}}\Lambda_{S}$, where $\Lambda_{S}$ denotes the trivial $\Lambda S$-module.
	Let $\overline G=G/Q$, $\overline H=H/Q$ and let
	 $\bar{e}$ and $\bar{f}$ be the images of $e$ and $f$ through the canonical morphisms $\Lambda G\to \Lambda \overline{G}$ and $\Lambda H\to \Lambda \overline{H}$, and $\overline{\cC}=\Lambda\overline{G}\bar{e}\otimes_{\Lambda G}\cC\otimes_{\Lambda H} \Lambda \overline{H}\bar{f}$. 
	 
	 Then $\cC$ induces a Rickard equivalence between $\Lambda Ge$ and $\Lambda Hf$ if and only if $\overline{\cC}$ induces a Rickard equivalence between $\Lambda \overline{G}\bar{e}$ and $\Lambda \overline{H}\bar{f}$.
\end{lem}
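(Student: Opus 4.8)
The plan is to reduce the equivalence of the two assertions to the compatibility of the ``reduction modulo $Q$'' functor with the constructions that define a Rickard equivalence. Recall from Section~\ref{sec:pre} that $\cC$ induces a Rickard equivalence between $\Lambda Ge$ and $\Lambda Hf$ exactly when the two canonical maps $\Lambda Ge\to\End^\bullet_{(\Lambda Hf)^{\opp}}(\cC)^{\opp}$ and $\Lambda Hf\to\End^\bullet_{\Lambda Ge}(\cC)$ are isomorphisms in the relevant homotopy categories; since the terms of $\cC$ are projective as left $\Lambda Ge$- and as right $\Lambda Hf$-modules, this is equivalent (see \cite{Ri96}) to the existence of homotopy equivalences $\cC\otimes_{\Lambda Hf}\cC^{*}\simeq\Lambda Ge$ and $\cC^{*}\otimes_{\Lambda Ge}\cC\simeq\Lambda Hf$, where $\cC^{*}=\mathrm{Hom}_{\Lambda}(\cC,\Lambda)$ is the dual complex. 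The two conditions are handled by identical arguments, so it suffices to explain the treatment of the first one. Write $\mathcal R$ for the additive functor $M\mapsto\Lambda\overline G\,\overline e\otimes_{\Lambda G}M\otimes_{\Lambda H}\Lambda\overline H\,\overline f$ on $(\Lambda Ge,\Lambda Hf)$-bimodules (together with its obvious variants on bimodules for the pairs $(\Lambda Ge,\Lambda Ge)$ and $(\Lambda Hf,\Lambda Hf)$), so that $\mathcal R(\cC)=\overline\cC$; being additive, $\mathcal R$ preserves homotopy equivalences and contractible complexes.

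First I would extract the structural consequences of the hypotheses on $S$. Since $G\times 1$ and $1\times H^{\opp}$ are normal in $G\times H^{\opp}$ and $S\cap(G\times 1)=S\cap(1\times H^{\opp})=1$, every conjugate ${}^{g}S$ again meets $G\times 1$ and $1\times H^{\opp}$ trivially; hence the restrictions of $\Lambda[(G\times H^{\opp})/S]=\Ind_{S}^{G\times H^{\opp}}\Lambda_{S}$ to $Q\times 1$ and to $1\times Q^{\opp}$ are free, and therefore every term of $\cC$ is free as a left and as a right $\Lambda Q$-module. On such modules the left and right $Q$-coinvariants occurring in $\mathcal R$ are exact, and an elementary computation with transitive permutation modules gives $\mathcal R(\Lambda[(G\times H^{\opp})/S])\cong\Lambda[(\overline G\times\overline H^{\opp})/\overline S]$, where $\overline S$ is the image of $S$. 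The role of the hypothesis $Q\times Q^{\opp}\le(Q\times 1)S=(1\times Q^{\opp})S$ is precisely that $S\cap(Q\times Q^{\opp})$ is a ``twisted diagonal'' copy of $Q$ projecting isomorphically onto $Q\times 1$ and onto $1\times Q^{\opp}$, which forces $\overline S\cap(\overline G\times 1)=\overline S\cap(1\times\overline H^{\opp})=1$. Thus $\overline\cC$ is again a complex of the kind considered in the statement, now for the data $(\overline G,\overline H,\overline S,\overline e,\overline f)$; in particular its terms are $\ell$-permutation bimodules all of whose indecomposable summands have a vertex meeting each factor trivially.

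Next I would verify that $\mathcal R$ is compatible with the operations involved. One has $\mathcal R(\Lambda Ge)=\Lambda\overline G\,\overline e$ and $\mathcal R(\Lambda Hf)=\Lambda\overline H\,\overline f$ by the very definition of $\overline e$ and $\overline f$; $\mathcal R(\cC^{*})\simeq\mathcal R(\cC)^{*}$, because the $\Lambda$-duality of these $\ell$-permutation bimodules commutes with passage to $Q$-(co)invariants thanks to the $\Lambda Q$-freeness; and $\mathcal R(\cC\otimes_{\Lambda Hf}\mathcal D)\simeq\mathcal R(\cC)\otimes_{\Lambda\overline H\,\overline f}\mathcal R(\mathcal D)$ for any complex $\mathcal D$ of the same type (for a pair $(H,K)$ and a subgroup $T$ with analogous properties), which, after writing the terms as summands of $\Lambda[(G\times H^{\opp})/S]$ and $\Lambda[(H\times K^{\opp})/T]$, reduces to the elementary identity for composing transitive permutation bimodules (cf.\ \cite{Lin18}) together with the $\Lambda Q$-freeness. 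Granting these, apply $\mathcal R$ to $\cC\otimes_{\Lambda Hf}\cC^{*}\simeq\Lambda Ge$: one obtains $\overline\cC\otimes_{\Lambda\overline H\,\overline f}\overline\cC^{\,*}\simeq\Lambda\overline G\,\overline e$, and likewise for the mirror condition, which is the ``only if'' direction.

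For the converse the extra ingredient is that $\mathcal R$ \emph{reflects} homotopy equivalences on complexes whose terms are direct summands of $\Lambda[(G\times H^{\opp})/S]$: for such a chain map $u$, the mapping cone of $u$ is a bounded complex of modules that are free over $\Lambda Q$ on both sides, hence it is contractible if and only if it is acyclic, and by a standard Nakayama/completeness argument the cone is acyclic if and only if $\mathcal R$ of it is acyclic; so $u$ is a homotopy equivalence if and only if $\overline u=\mathcal R(u)$ is. Feeding ``$\overline\cC$ induces a Rickard equivalence'' back through the compatibilities of the previous paragraph shows that $\mathcal R$ applied to the canonical maps for $\cC$ are homotopy equivalences, and reflecting along $\mathcal R$ then shows that these canonical maps are themselves homotopy equivalences, i.e.\ $\cC$ induces a Rickard equivalence. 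The main obstacle is the bundle of compatibilities together with the reflection property: these are the places where the $\ell$-permutation structure, the twisted-diagonal intersection conditions on $S$, and the careful bookkeeping of which Brauer pairs survive in the quotient must all be used at once, and they constitute exactly the content imported from \cite{Ro98}; once they are available, the remainder of the proof is formal.
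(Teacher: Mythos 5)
Your outline reproduces the architecture of the proof of \cite[Lemma~10.2.11]{Ro98}, which is all the paper itself invokes: reduce to the canonical maps, check that the reduction functor $\mathcal R$ commutes with $\Lambda$-duality and with $\otimes_{\Lambda H}$ on complexes of this type, and then argue that $\mathcal R$ detects homotopy equivalences. Your bookkeeping of where the hypotheses enter is also right: the trivial intersections $S\cap(G\ti 1)=S\cap(1\ti H^{\opp})=1$ give freeness over $\Lambda Q$ on each side, and the condition $Q\ti Q^{\opp}\le(Q\ti 1)S=(1\ti Q^{\opp})S$ is exactly what makes left and right reduction mod $Q$ agree on these modules, which is what the compatibility of $\mathcal R$ with $\otimes_{\Lambda H}$ really rests on.

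There is, however, a genuine gap in the detection step. You claim that a bounded complex whose terms are free over $\Lambda Q$ on both sides is \emph{contractible if and only if it is acyclic}. Freeness over $\Lambda Q$ gives no control over projectivity as $\Lambda(G\ti H^{\opp})$-modules, and bounded acyclic complexes of non-projective bimodules need not split. Already for $G=H=C_2$, $\ell=2$ and $Q=1$ (so your freeness hypothesis is vacuous, yet the terms are trivial-source bimodules with twisted-diagonal vertices, exactly the kind occurring in your mapping cones), the non-split extension of $\Ind_{\Delta G}^{G\ti G^{\opp}}(k)$ by itself with middle term the indecomposable module $k[G\ti G^{\opp}]$ is a bounded acyclic complex of such bimodules that is not contractible. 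Since the cone of $\Lambda Ge\to\cC\otimes_{\Lambda Hf}\cC^{*}$ has terms with mixed vertices of precisely this sort (it contains $\Lambda Ge$ itself, a summand of $\Ind_{\Delta G}^{G\ti G^{\opp}}\Lambda$), your chain of equivalences breaks at the link ``cone contractible $\Leftrightarrow$ cone acyclic'', and it is needed in both directions. The conclusion you are after is still correct, but for a different reason: either invoke the symmetric-algebra formalism (for a bounded complex with terms projective on each side between symmetric algebras, the canonical maps are homotopy equivalences as soon as they are quasi-isomorphisms, by Rickard's duality/trace argument), or argue as Rouquier actually does, at the level of $\cC^{\mathrm{red}}$: one shows that $\mathcal R$ is conservative on the additive category generated by the relevant trivial-source modules (it induces a bijection on isomorphism classes of the indecomposables and is surjective on homomorphisms with kernel in the radical), so that $\mathcal R(\cC^{\mathrm{red}})\simeq\mathcal R(\cC)^{\mathrm{red}}$ and contractibility is detected directly, with no passage through acyclicity.
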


\begin{proof}
	This follows from the proof of Lemma 10.2.11 of \cite{Ro98}.
	Note that the assumption that each indecomposable direct summand of 
	each component of $\cC$ has trivial source and vertex $S$ there is to ensure that each of these indecomposable direct summands is isomorphic to a
	indecomposable direct summand of $\Ind_{S}^{G\ti H^{\opp}}\Lambda_{S}$.
\end{proof}

Now we keep the hypotheses and setup of Theorem \ref{thm:equ-normaliser}.
Write $\overline{\N_{\bG^F}(Q)}=\N_{\bG^F}(Q)/Q$ and $\overline{\N_{\bL^F}(Q)}=\N_{\bL^F}(Q)/Q$.

\begin{lem}\label{lem:homo-gp}
	Let $\overline{\G\Ga_c(\bY_{\C_{\bU}(Q)}^{\N_{\bG}(Q)},\Lambda)}=\Lambda\overline{\N_{\bG^F}(Q)}\otimes_{\Lambda\N_{\bG^F}(Q)} \G\Ga_c(\bY_{\C_{\bU}(Q)}^{\N_{\bG}(Q)},\Lambda)\otimes_{\Lambda\N_{\bL^F}(Q)}\Lambda\overline{\N_{\bL^F}(Q)}$.	
	\begin{enumerate}[\rm(i)]
		\item $H^i(\overline{\G\Ga_c(\bY_{\C_{\bU}(Q)}^{\N_{\bG}(Q)},\Lambda)})\ne 0$ if and only if $i=\dim (\bY_{\C_{\bU}(Q)}^{\N_{\bG}(Q)})$.
		\item $H^{\dim( \bY_{\C_{\bU}(Q)}^{\N_{\bG}(Q)})}(\overline{\G\Ga_c(\bY_{\C_{\bU}(Q)}^{\N_{\bG}(Q)},\Lambda)})\simeq\overline{H^{\dim}(\bY_{\C_{\bU}(Q)}^{\N_{\bG}(Q)},\Lambda)}$.
	\end{enumerate}
\end{lem}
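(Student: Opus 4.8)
The plan is to apply the technical reduction Lemma \ref{lem:quotient} to the splendid Rickard complex
$\cC:=\G\Ga_c(\bY_{\C_{\bU}(Q)}^{\N_{\bG}(Q)},\Lambda)C_Q$ underlying Ruhstorfer's Morita equivalence in Theorem \ref{thm:equ-normaliser}, and then read off the cohomology of the quotient complex from the fact that a Rickard equivalence induced by a single bimodule concentrated in one degree is a Morita equivalence. First I would set $G=\N_{\bG^F}(Q)$, $H=\N_{\bL^F}(Q)$, $e=B_Q$, $f=C_Q$, so that $\overline G=\overline{\N_{\bG^F}(Q)}$ and $\overline H=\overline{\N_{\bL^F}(Q)}$; the notation $\overline{\G\Ga_c(\cdots)}$ in the statement is then exactly the $\overline\cC$ of Lemma \ref{lem:quotient} (after tensoring the idempotent $C_Q$ into $\cC$, or equivalently working with $\cC$ itself since $C_Q$ is central). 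One needs $Q$ to be a common normal $\ell$-subgroup of $G$ and $H$: this is clear since $Q\unlhd \N_{\bG^F}(Q)$ and $Q\cap\bL^F=Q$ (as $Q\le\bL$, because $\cA$-invariance aside, $Q$ was taken inside $\bL$ from the outset of \S\ref{subsec:mod-rep}-style conventions; more precisely $Q$ is the fixed $\ell$-subgroup normalizing $\bP,\bL$ in the setup preceding Theorem \ref{thm:equ-normaliser}), so $Q\unlhd\N_{\bL^F}(Q)$ as well.

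The key verification is the hypothesis of Lemma \ref{lem:quotient} on the source and vertex of the components of $\cC$: each indecomposable summand of each term of $\cC^{\mathrm{red}}$ must be a direct summand of $\Ind_S^{G\times H^{\opp}}\Lambda_S$ for an $\ell$-subgroup $S\le G\times H^{\opp}$ with $S\cap(1\times H^{\opp})=S\cap(G\times1)=1$ and $Q\times Q^{\opp}\le(Q\times1)S=(1\times Q^{\opp})S$. Here $S=\Delta Q=\{(u,u^{-1})\mid u\in Q\}$ is the natural candidate: the complex $\G\Ga_c(\bY^{\N_{\bG}(Q)}_{\C_{\bU}(Q)},\Lambda)$ is a complex of $\ell$-permutation modules whose indecomposable summands have vertices contained in $\Delta\N_{\bL^F}(Q)$ by the splendidness statement (Theorem \ref{thm:equ-blocks} applied inside $\N_{\bG}(Q)$, i.e. the analogue for the reductive group $\N_{\bG}(Q)$ with Levi $\N_{\bL}(Q)$ and unipotent radical $\C_{\bU}(Q)$), and one must argue — using that $Q$ acts trivially on $\bY^{\N_{\bG}(Q)}_{\C_{\bU}(Q)}$ on both sides, since $Q$ is normal and acts by inner automorphisms which fix the variety up to the diagonal — that one may take the vertex to be exactly $\Delta Q$, or at least that each summand is relatively $\Delta Q$-projective with trivial source after the quotient. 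The diagonal and intersection conditions $S\cap(G\times1)=1$ etc. are immediate for $S=\Delta Q$, and $(Q\times1)\Delta Q=(1\times Q^{\opp})\Delta Q=Q\times Q^{\opp}$ is a triviality. This gives that $\cC$ induces a Rickard equivalence between $\Lambda G B_Q$ and $\Lambda H C_Q$ (which we already know from Theorem \ref{thm:equ-normaliser}, so in fact we only need the ``only if'' direction) and simultaneously that $\overline\cC$ induces a Rickard equivalence between $\Lambda\overline G\overline{B_Q}$ and $\Lambda\overline H\overline{C_Q}$.

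To conclude, I would note that $\cC$ is (up to homotopy) concentrated in a single degree: by Theorem \ref{thm:equ-normaliser} the Morita equivalence is induced by $H_c^{\dim}(\bY^{\N_{\bG}(Q)}_{\C_{\bU}(Q)},\Lambda)C_Q$, so $\cC^{\mathrm{red}}\simeq H^{\dim}(\bY^{\N_{\bG}(Q)}_{\C_{\bU}(Q)},\Lambda)C_Q[-\dim]$ in $\mathrm{Ho}^b$ — this is the standard fact that for Deligne--Lusztig varieties attached to this Bonnaf\'e--Dat--Rouquier situation the cohomology is concentrated in the middle degree on the relevant block. Applying the exact functor $\Lambda\overline G\,\overline{B_Q}\otimes_{\Lambda G}(-)\otimes_{\Lambda H}\Lambda\overline H\,\overline{C_Q}$, which is exact because $\Lambda\overline G$ is free over $\Lambda G$ on one side (it is $\Lambda G/(\text{ideal})$ but flatness of the quotient group algebra over the group algebra with respect to a normal subgroup fails in general — instead one uses that $\overline\cC$ computes the derived tensor product and that $\G\Ga_c(\bY,\Lambda)$ has $\ell$-permutation, hence $\Lambda$-free, and $Q$-free-after-quotient terms, so the naive and derived quotients agree), the quotient complex $\overline\cC\simeq\overline{\cC^{\mathrm{red}}}$ is again concentrated in degree $\dim(\bY^{\N_{\bG}(Q)}_{\C_{\bU}(Q)})$ with cohomology $\overline{H^{\dim}(\bY^{\N_{\bG}(Q)}_{\C_{\bU}(Q)},\Lambda)}$, which is (i) and (ii). The main obstacle I anticipate is the bookkeeping in the second paragraph: pinning down that the relevant vertices can be taken to be precisely $\Delta Q$ (rather than some larger diagonal subgroup) and that the quotient operation $\Lambda\overline G\otimes_{\Lambda G}(-)$ genuinely commutes with taking cohomology here — this is where one must invoke the $\ell$-permutation structure of $\G\Ga_c(\bY^{\N_{\bG}(Q)}_{\C_{\bU}(Q)},\Lambda)$ together with the fact that $Q$ acts trivially on the variety, exactly as in the cited proof of \cite[Lemma~10.2.11]{Ro98}.
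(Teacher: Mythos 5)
Your proposal assembles the right raw materials but the load-bearing step is asserted rather than proved, and most of your effort goes into the wrong statement. Lemma \ref{lem:homo-gp} is not about the Rickard equivalence of the quotient block algebras -- that is Theorem \ref{thm:morita-quotient}, which is proved \emph{afterwards} using this lemma together with Lemma \ref{lem:quotient}; it is the purely homological claim that the (non-exact) quotient functor commutes with cohomology for this particular complex. Your justification of that claim is ``$\cC^{\mathrm{red}}\simeq H^{\dim}(\cdot)[-\dim]$ in $\mathrm{Ho}^b$'', i.e.\ a homotopy equivalence as a complex of \emph{bimodules}. That is an overclaim: what Bonnaf\'e--Dat--Rouquier and Ruhstorfer give is vanishing of cohomology outside the top degree, a statement in the derived category; a homotopy splitting as bimodules would require the top cohomology to be a bimodule direct summand of a term of the complex, which is not available. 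The paper's proof instead observes that the terms of $\G\Ga_c(\bY^{\N_{\bG}(Q)}_{\C_{\bU}(Q)},\Lambda)^{\mathrm{red}}$ are projective as \emph{one-sided} $\Lambda\N_{\bL^F}(Q)$-modules, so cohomology concentration forces a splitting as a complex of one-sided modules, and this is what makes $-\otimes_{\Lambda\N_{\bL^F}(Q)}\Lambda\overline{\N_{\bL^F}(Q)}$ commute with cohomology. Two further steps you omit: the identification of this one-sided quotient with the two-sided quotient $\overline{(\cdot)}$ uses that the vertices are of the form $\Delta T$ with $Q\le T$ (not $\Delta Q$ exactly -- your worry about ``pinning down $\Delta Q$'' is moot, since $Q\le T$ already gives $(Q\times 1)\Delta T=(1\times Q^{\opp})\Delta T$, which is the displayed isomorphism from Rouquier's Lemma 10.2.11); and part (ii) needs the observation that $Q\times 1$ acts trivially on the cohomology of the one-sided quotient, so that quotienting on one side already yields $\overline{H^{\dim}(\bY^{\N_{\bG}(Q)}_{\C_{\bU}(Q)},\Lambda)}$.

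Independently, the case $\Lambda=\cO$ is missing. Even granting projective terms and concentrated cohomology, the one-sided splitting is not automatic over $\cO$: a two-term complex $M_1\hookrightarrow M_2$ of $\cO$-free modules is homotopy equivalent to its cokernel only if that cokernel is $\cO$-free, and a priori $H^{\dim}$ could have $\cO$-torsion. The paper rules this out by a rank count comparing with the reduction modulo $\ell$, via \cite[Lemma 1.2]{Du12}. Your argument, once the gaps above are filled, works verbatim for $\Lambda=k$ but silently assumes the splitting over $\cO$.
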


\begin{proof}
Let $\mathcal C_0=\G\Ga_c(\bY_{\C_{\bU}(Q)}^{\N_{\bG}(Q)},\Lambda)^{\mathrm{red}}$. 	Denote $G=\bG^F$ and $L=\bL^F$.
Since $\G\Ga_c(\bY_{\bU}^{\bG},\cO)^{\mathrm{red}}$ is splendid, each indecomposable direct summand of components of $\mathcal C_0$ is a direct summand of $\Ind^{\N_G(Q)}_{\Delta T}\Lambda_{\Delta T}$ where $Q\le T$.

First let $\Lambda=k$.
By the proof of Lemma 10.2.11 of \cite{Ro98}, we know
$$\mathcal C_0\otimes _{k \N_L(Q)}k \overline{\N_L(Q)}\simeq k \overline{\N_G(Q)}\times_{k \N_G(Q)} \mathcal C_0\otimes _{k \N_L(Q)} \overline{\N_L(Q)}:=\overline{\mathcal C}_0$$
in $\mathrm{Comp}^b(k (\N_G(Q)\ti\N_L(Q)^{\opp}))$.
Note that as $k \N_L(Q)$-modules, the components of $\mathcal C_0$ are projective.
Then $\mathcal C_0\simeq H^{\dim (\bY_{\C_{\bU}(Q)}^{\N_{\bG}(Q)})}(\mathcal C_0)$ in $\mathrm{Ho}^b(k\N_L(Q))$.
Thus $$\mathcal C_0\otimes _{k \N_L(Q)}k \overline{\N_L(Q)}\simeq H^{\dim (\bY_{\C_{\bU}(Q)}^{\N_{\bG}(Q)})}(\mathcal C_0) \otimes _{k \N_L(Q)}k \overline{\N_L(Q)}$$
in $\mathrm{Ho}^b(k\N_L(Q))$.
So $\overline{\mathcal C}_0\simeq H^{\dim (\bY_{\C_{\bU}(Q)}^{\N_{\bG}(Q)})}(\mathcal C_0) \otimes _{k \N_L(Q)}k \overline{\N_L(Q)}$
in $\mathrm{Ho}^b(k\N_L(Q))$, which means (i).

Now $$H^{\dim (\bY_{\C_{\bU}(Q)}^{\N_{\bG}(Q)})}(\overline{\mathcal C}_0)\simeq H^{\dim (\bY_{\C_{\bU}(Q)}^{\N_{\bG}(Q)})}(\mathcal C_0) \otimes _{k \N_L(Q)}k \overline{\N_L(Q)}$$
as $k (\N_G(Q)\ti\N_L(Q)^{\opp})$-modules.
Since $Q\ti 1$ acts trivially on $H^{\dim}(\overline{\mathcal C}_0)$, so does on $$H^{\dim (\bY_{\C_{\bU}(Q)}^{\N_{\bG}(Q)})}(\mathcal C_0) \otimes _{k \N_L(Q)} k \overline{\N_L(Q)}.$$
Thus 
\begin{align*}
H^{\dim (\bY_{\C_{\bU}(Q)}^{\N_{\bG}(Q)})}(\overline{\mathcal C}_0)\simeq& H^{\dim (\bY_{\C_{\bU}(Q)}^{\N_{\bG}(Q)})}(\mathcal C_0) \otimes _{k \N_L(Q)} k \overline{\N_L(Q)}\\
\simeq& k \overline{\N_G(Q)}\times_{k \N_G(Q)} H^{\dim (\bY_{\C_{\bU}(Q)}^{\N_{\bG}(Q)})}(\mathcal C_0)\otimes _{k \N_L(Q)} \overline{\N_L(Q)}\\
\simeq& \overline{H^{\dim (\bY_{\C_{\bU}(Q)}^{\N_{\bG}(Q)})}(\mathcal C_0)},
\end{align*}
which gives (ii).

Now we let $\Lambda=\cO$.
(i) is obvious.
We claim that $\mathcal C_0\simeq H^{\dim (\bY_{\C_{\bU}(Q)}^{\N_{\bG}(Q)})}(\mathcal C_0)$ in $\mathrm{Ho}^b(\Lambda\N_L(Q))$.
Assume that out claim does not hold.
Note that as $\cO \N_L(Q)$-modules, the components of $\mathcal C_0$ are projective.
So by \cite[Lemma 1.2]{Du12}, we have a decomposition
$H^{\dim (\bY_{\C_{\bU}(Q)}^{\N_{\bG}(Q)})}(\mathcal C_0)\simeq H_1\oplus H_2$ as $\cO$-modules where $H_1$ is $\cO$-free and $H_2\ne 0$ is $\cO$-torsion,
and $\mathcal C_0$ is homotopy equivalent to a complex
$$\cdots \to 0 \to M_1\to M_2\to 0\to \cdots$$
where the non-zero modules $M_1$ and $M_2$ are $\cO$-free.
Let $r_i$ be the $\cO$-rank of $M_i$ for $i=1,2$. 
Since the cohomology of $\mathcal C_0$ vanishes outside one degree, we know that the map $M_1\to M_2$ is injective.
Then the $\cO$-rank of $H_1$ is $r_2-r_1$.
On the other hand,  the cohomology of the complex
$$\cdots \to 0 \to M_1\otimes_\cO k\to M_2\otimes_\cO k\to 0\to \cdots$$
vanishes outside one degree,
since the cohomology of $\mathcal C_0\otimes_\cO k$ vanishes outside one degree. 
So $r_i$ is the $k$-rank of $M_i$ and then
$H^{\dim (\bY_{\C_{\bU}(Q)}^{\N_{\bG}(Q)})}(\mathcal C_0\otimes_\cO k)$ is of $k$-dimension $r_2-r_1$.
However, this contradicts $H_2\ne 0$.
Thus our claim holds.
Then (ii) holds by a similar argument as in the above paragraph.
Thus we complete the proof.
\end{proof}

\begin{thm}\label{thm:morita-quotient}
The bimodule $\overline{H_c^{\dim}(\bY^{\N_{\bG}(Q)}_{C_{\bU}(Q)},\Lambda)C_Q}$  induces a Morita equivalence between $\Lambda \overline{\N_{\bL^F}(Q)}\bar{C}_Q$ and $\Lambda \overline{\N_{\bG^F}(Q)}\bar{B}_Q$.
\end{thm}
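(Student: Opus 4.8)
The plan is to deduce this directly from Theorem~\ref{thm:equ-normaliser} together with the quotient criterion of Lemma~\ref{lem:quotient}, applying it with $G=\N_{\bG^F}(Q)$, $H=\N_{\bL^F}(Q)$, $e=B_Q$ and $f=C_Q$, and using Lemma~\ref{lem:homo-gp} to identify the reduced bimodule. First I would recall that by Theorem~\ref{thm:equ-blocks} the complex $\G\Ga_c(\bY^{\N_{\bG}(Q)}_{\C_{\bU}(Q)},\cO)^{\mathrm{red}}$ is splendid, so each indecomposable direct summand of each component of $\mathcal C_0:=\G\Ga_c(\bY^{\N_{\bG}(Q)}_{\C_{\bU}(Q)},\Lambda)^{\mathrm{red}}C_Q$ has trivial source and a vertex contained in $\Delta T$ for some subgroup $T$ of $\N_{\bG}(Q)^F$ containing $Q$ (indeed $Q\le T$ since $Q$ acts trivially on the Deligne--Lusztig variety of $\N_{\bG}(Q)$, as $\C_{\bU}(Q)$ is the unipotent radical there and $Q$ normalizes everything in sight). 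Thus the subgroup $S=\Delta T$ satisfies $S\cap(1\times H^{\opp})=S\cap(G\times 1)=1$ and $Q\times Q^{\opp}\le(Q\times 1)S=(1\times Q^{\opp})S$, so the hypotheses of Lemma~\ref{lem:quotient} are met.

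Next I would run the argument. By Theorem~\ref{thm:equ-normaliser}, the bimodule $H_c^{\dim}(\bY^{\N_{\bG}(Q)}_{\C_{\bU}(Q)},\Lambda)C_Q$ induces a Morita equivalence between $\Lambda\N_{\bL^F}(Q)C_Q$ and $\Lambda\N_{\bG^F}(Q)B_Q$; equivalently, the one-term complex it defines induces a (splendid) Rickard equivalence. Since $\mathcal C_0$ is concentrated in a single cohomological degree up to homotopy in $\mathrm{Ho}^b(\Lambda\N_L(Q))$ (this is exactly what is proven inside Lemma~\ref{lem:homo-gp}), $\mathcal C_0$ and the bimodule $H_c^{\dim}(\bY^{\N_{\bG}(Q)}_{\C_{\bU}(Q)},\Lambda)C_Q$ are homotopy equivalent as complexes of $\Lambda\N_{\bG^F}(Q)B_Q$-$\Lambda\N_{\bL^F}(Q)C_Q$-bimodules, hence $\mathcal C_0$ itself induces a Rickard equivalence between these two block algebras. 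Now Lemma~\ref{lem:quotient} applies verbatim with $\overline G=\N_{\bG^F}(Q)/Q$, $\overline H=\N_{\bL^F}(Q)/Q$, $\bar e=\bar B_Q$, $\bar f=\bar C_Q$, and yields that $\overline{\mathcal C_0}=\Lambda\overline{\N_{\bG^F}(Q)}\bar B_Q\otimes_{\Lambda\N_{\bG^F}(Q)}\mathcal C_0\otimes_{\Lambda\N_{\bL^F}(Q)}\Lambda\overline{\N_{\bL^F}(Q)}\bar C_Q$ induces a Rickard equivalence between $\Lambda\overline{\N_{\bL^F}(Q)}\bar C_Q$ and $\Lambda\overline{\N_{\bG^F}(Q)}\bar B_Q$.

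Finally, I would identify $\overline{\mathcal C_0}$ as the bimodule in the statement. By Lemma~\ref{lem:homo-gp}(i), $\overline{\mathcal C_0}$ (which is $\overline{\G\Ga_c(\bY^{\N_{\bG}(Q)}_{\C_{\bU}(Q)},\Lambda)}$ cut by the idempotents, using the splendid $\mathrm{red}$-form) has cohomology concentrated in degree $\dim(\bY^{\N_{\bG}(Q)}_{\C_{\bU}(Q)})$, so it is homotopy equivalent to its top cohomology module, which by Lemma~\ref{lem:homo-gp}(ii) is $\overline{H_c^{\dim}(\bY^{\N_{\bG}(Q)}_{\C_{\bU}(Q)},\Lambda)C_Q}$. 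Since a complex inducing a Rickard equivalence that is concentrated in one degree gives a Morita equivalence via that module, we conclude that $\overline{H_c^{\dim}(\bY^{\N_{\bG}(Q)}_{\C_{\bU}(Q)},\Lambda)C_Q}$ induces a Morita equivalence between $\Lambda\overline{\N_{\bL^F}(Q)}\bar C_Q$ and $\Lambda\overline{\N_{\bG^F}(Q)}\bar B_Q$, as claimed.

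The main obstacle I expect is the careful bookkeeping needed to invoke Lemma~\ref{lem:quotient}: one must check that the vertices of the indecomposable summands of the splendid complex really are of the form $\Delta T$ with $Q\le T$ (so that the intersection and product conditions on $S$ hold), and that idempotent truncation by $C_Q$ (respectively $B_Q$) does not destroy the $\ell$-permutation/trivial-source structure — this is where the good behaviour of $\ell$-permutation modules under the Brauer construction and under multiplication by block idempotents, recalled in Section~\ref{sec:pre}, is used. The compatibility of the various degree shifts and the passage between the $\cO$-case and the $k$-case is already handled in Lemma~\ref{lem:homo-gp}, so once that is in place the remaining steps are formal.
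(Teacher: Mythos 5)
Your proposal is correct in substance and follows essentially the same route as the paper: pass to the quotient groups via Lemma~\ref{lem:quotient}, verify its trivial-source/vertex hypothesis for the local complex, and identify the resulting complex with its top cohomology via Lemma~\ref{lem:homo-gp}. Two deviations are worth recording. First, the paper obtains the input Rickard equivalence between $\Lambda\N_{\bL^F}(Q)C_Q$ and $\Lambda\N_{\bG^F}(Q)B_Q$ by citing the splendid Rickard equivalence at the level of normalizers from \cite[Prop.~3.7]{Ru20a}, whereas you reconstruct it from Theorem~\ref{thm:equ-normaliser} together with the concentration of cohomology; this is legitimate, but note that one-sided concentration only gives a quasi-isomorphism of the bimodule complex with its top cohomology (which suffices, since Rickard equivalence is equivalent to derived equivalence), not a homotopy equivalence of bimodule complexes as you assert. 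Second, the paper treats $k$ first and then lifts to $\cO$ using \cite[Thm.~5.2]{Ri96}, while you apply Lemma~\ref{lem:quotient} over $\Lambda$ uniformly; that can be made to work, but the vertex verification is naturally carried out over $k$ (via the Brauer construction), so the paper's order of operations is the safer one.

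The one point you should repair is the justification of the vertex hypothesis, which is the crux of applying Lemma~\ref{lem:quotient}. Theorem~\ref{thm:equ-blocks} asserts splendidness of the \emph{global} complex $\G\Ga_c(\bY^{\bG}_{\bU},\cO)^{\mathrm{red}}e_s^{\bL^F}$, not of the normalizer complex, so it cannot be quoted directly as you do. The correct argument identifies $\G\Ga_c(\bY^{\C_{\bG}(Q)}_{\C_{\bU}(Q)},k)$ with $\Br_{\Delta Q}(\G\Ga_c(\bY^{\bG}_{\bU},k))$ and the normalizer complex with $\Ind_{\N_{G\times L^{\opp}}(\Delta Q)}^{\N_G(Q)\times\N_L(Q)^{\opp}}$ of it; splendidness of the global complex together with the behaviour of $\Br_{\Delta Q}$ on $\ell$-permutation modules then forces the indecomposable summands to have trivial source and vertices $\Delta T$ with $Q\le T\le\N_{\bL^F}(Q)$. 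Your heuristic that ``$Q$ acts trivially on the Deligne--Lusztig variety of $\N_{\bG}(Q)$'' is not accurate: it is the diagonal $\Delta Q$ that acts trivially on the centralizer variety $\bY^{\C_{\bG}(Q)}_{\C_{\bU}(Q)}$, while $Q$ does not act trivially on the normalizer variety. Since you flagged this verification as the expected obstacle, the gap is one of justification rather than of strategy, and the argument goes through once this step is done as in the paper.
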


\begin{proof}
We first claim that $\overline{\G\Ga_c(\bY^{\N_{\bG}(Q)}_{\C_{\bU}(Q)},k)C_Q}$  induces a Rickard equivalence between $k \overline{\N_{\bL^F}(Q)}\bar{C}_Q$ and $k \overline{\N_{\bG^F}(Q)}\bar{B}_Q$.	
According to Lemma \ref{lem:quotient}, Theorem \ref{thm:equ-normaliser} and \cite[Prop.~3.7]{Ru20a},
 it suffices to prove the following: 
	the indecomposable direct summands of all components of $$\Ind_{\N_{G\times L^{\opp}}(\Delta Q)}^{\N_G(Q)\times N_L(Q)^{\opp}}\G\Ga_c(\bY_{\C_{\bold U}(Q)}^{\C_{\bold G}(Q)},k)$$ have trivial source and vertices of the form $\Delta S$ where $S$ is an $\ell$-subgroup of $\N_L(Q)$ and $Q\subseteq S$. 
	Note that $$\Ind_{\N_{G\times L^{\opp}}(\Delta Q)}^{\N_G(Q)\times N_L(Q)^{\opp}}\G\Ga_c(\bY_{\C_{\bold U}(Q)}^{\C_{\bold G}(Q)},k)=\Ind_{\N_{G\times L^{\opp}}(\Delta Q)}^{\N_G(Q)\times \N_L(Q)^{\opp}}\Br_{\Delta Q}(\G\Ga_c(\bY_{\bold U}^{\bold G},k))$$ and $\G\Ga_c(\bY_{\bold U}^{\bold G},k)$ is splendid, 
	which means  indecomposable direct summands of its components have trivial source and vertices of the form $\Delta P$ where $P$ is an $\ell$-subgroup of $L$.
	Thus the indecomposable direct summands of  the components of
	$$\G\Ga_c(\bY_{\C_{\bold U}(Q)}^{\C_{\bold G}(Q)},k)=\Br_{\Delta Q}(\G\Ga_c(\bY_{\bold U}^{\bold G},k))$$ 
	have trivial source and vertices of the form $\Delta T$ where $T$ is an $\ell$-subgroup of $\N_L(Q)$ and $Q\subseteq T$ (actually, each component of $\G\Ga_c(\bY_{\C_{\bold U}(Q)}^{\C_{\bold G}(Q)},k)$ is a $k(\N_{G\times L^{\opp}}(\Delta Q)/\Delta{Q})$-module). 
Thus our claim holds.

Note that the complex $\G\Ga_c(\bY^{\N_{\bG}(Q)}_{\C_{\bU}(Q)},\cO)C_Q$ is a splendid complex of $\cO \N_G(Q)$-$\cO\N_L(Q)$-bimodules, which is a lift to $\cO$ of $\G\Ga_c(\bY^{\N_{\bG}(Q)}_{\C_{\bU}(Q)},k)C_Q$.
By \cite[Thm.~5.2]{Ri96} it follows that  $\overline{\G\Ga_c(\bY^{\N_{\bG}(Q)}_{\C_{\bU}(Q)},\cO)C_Q}$  induces a Rickard equivalence between $\cO \overline{\N_{\bL^F}(Q)}\bar{C}_Q$ and $\cO \overline{\N_{\bG^F}(Q)}\bar{B}_Q$.	

By Lemma \ref{lem:homo-gp}, 
	$H^{\dim( \bY_{\C_{\bU}(Q)}^{\N_{\bG}(Q)})}(\overline{\G\Ga_c(\bY_{\C_{\bU}(Q)}^{\N_{\bG}(Q)},\Lambda)})\simeq\overline{H^{\dim}(\bY_{\C_{\bU}(Q)}^{\N_{\bG}(Q)},\Lambda)}$ is the only non-trivial cohomology of 
	$\overline{\G\Ga_c(\bY_{\C_{\bU}(Q)}^{\N_{\bG}(Q)},\Lambda)}$.
	Thus the bimodule $\overline{H_c^{\dim}(\bY^{\N_{\bG}(Q)}_{C_{\bU}(Q)},\Lambda)C_Q}$  induces a Morita equivalence between $\Lambda \overline{\N_{\bL^F}(Q)}\bar{C}_Q$ and $\Lambda \overline{\N_{\bG^F}(Q)}\bar{B}_Q$.
\end{proof}

\subsection{Jordan decomposition of weights}

\begin{lem}\label{lem:nor-wei-sta}
	Let $(Q,C_Q)\in \bNBr(\bL^F,c)$ and $(Q,B_Q)=\Phi(Q,C_Q)$. 
	Then $\N_{\tbG^F\cA}(Q,B_Q)=\N_{\bG^F}(Q) \N_{\tbL^F \cA}(Q,C_Q)$.
\end{lem}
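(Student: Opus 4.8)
I would prove the two inclusions separately. The inclusion ``$\supseteq$'' is formal, so let me dispose of it first. On the one hand $B_Q\in Z(\cO\N_{\bG^F}(Q))$, hence is fixed under conjugation by every element of $\N_{\bG^F}(Q)$, so $\N_{\bG^F}(Q)\subseteq\N_{\tbG^F\cA}(Q,B_Q)$. On the other hand, any $y\in\N_{\tbL^F\cA}(Q,C_Q)$ fixes $c=(C_Q)^{\bL^F}$, so $y\in(\tbL^F\cA)_c$, and then the $(\tbL^F\cA)_c$-equivariance of $\Phi$ from Lemma \ref{lem:inj-Nor-bl} gives ${}^y(Q,B_Q)={}^y\Phi(Q,C_Q)=\Phi({}^y(Q,C_Q))=\Phi(Q,C_Q)=(Q,B_Q)$, whence $\N_{\tbL^F\cA}(Q,C_Q)\subseteq\N_{\tbG^F\cA}(Q,B_Q)$.

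\textbf{Set-up for ``$\subseteq$''.} The key structural input is the factorisation $\tbG^F\cA=\bG^F(\tbL^F\cA)$, which follows from $\tbG^F=\bG^F\tbL^F$; the latter is standard, being a consequence of $\tbG=\bG\,Z(\tbG)$, of $Z(\tbG)\subseteq\tbL$, and of $H^1(F,\bG)=H^1(F,\bL)=1$ (Lang--Steinberg), using that $Z(\tbG)\cap\bG=Z(\tbG)\cap\bL$. Now take $x\in\N_{\tbG^F\cA}(Q,B_Q)$. Since block induction is functorial and $x$ normalises $\bG^F$ and $Q$, it fixes $b=(B_Q)^{\bG^F}$, hence also $e_s^{\bG^F}$. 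Write $x=g_1y_1$ with $g_1\in\bG^F$ and $y_1\in\tbL^F\cA$. Then $y_1=g_1^{-1}x$ fixes $b$ (as $g_1^{-1}\in\bG^F$ does), and therefore fixes $e_s^{\bL^F}$ and the block $c$ by the $\tbL^F\cA$-equivariance of the Morita equivalence in Theorem \ref{thm:equ-blocks} (compare the proof of Lemma \ref{lem:equivariant-Br-pairs}); thus $y_1\in(\tbL^F\cA)_c$.

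\textbf{Transporting the stabiliser condition.} Put $Q'={}^{y_1}Q={}^{g_1^{-1}}Q\subseteq\bL^F$ and $C_{Q'}={}^{y_1}C_Q$, so that $(Q',C_{Q'})={}^{y_1}(Q,C_Q)\in\bNBr(\bL^F,c)$. The $(\tbL^F\cA)_c$-equivariance of $\Phi$ gives $\Phi(Q',C_{Q'})={}^{y_1}\Phi(Q,C_Q)={}^{y_1}(Q,B_Q)$, while $x=g_1y_1$ fixing $(Q,B_Q)$ gives ${}^{y_1}(Q,B_Q)={}^{g_1^{-1}}(Q,B_Q)$. Hence $\Phi(Q,C_Q)=(Q,B_Q)$ and $\Phi(Q',C_{Q'})$ are two pairs in $\Phi(\bNBr(\bL^F,c))$ that are $\bG^F$-conjugate, so by Remark \ref{remark:conj-pairs-blocks} they are $\bL^F$-conjugate: ${}^h\Phi(Q,C_Q)=\Phi(Q',C_{Q'})$ for some $h\in\bL^F$. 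Combining the $\bL^F$-equivariance of $\Phi$ with its injectivity (Lemma \ref{lem:inj-Nor-bl}) yields ${}^h(Q,C_Q)=(Q',C_{Q'})={}^{y_1}(Q,C_Q)$, so $h^{-1}y_1\in\N_{\tbL^F\cA}(Q,C_Q)$. Finally $x=g_1y_1=(g_1h)(h^{-1}y_1)$; since $x$ and $h^{-1}y_1$ normalise $Q$ and $g_1h\in\bG^F$, also $g_1h$ normalises $Q$, so $x\in\N_{\bG^F}(Q)\N_{\tbL^F\cA}(Q,C_Q)$, completing the proof.

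\textbf{Main obstacle.} The point requiring care is the passage between the $\bG^F$-side and the $\bL^F$-side: since $x$ need not normalise $\bL$, there is a priori no action of $x$ on $C_Q$, and the proof instead factors $x$ through $\tbG^F=\bG^F\tbL^F$ and transports the stabiliser condition across the Bonnaf\'e--Dat--Rouquier correspondence. The essential ingredient that makes this go through is the reflection of conjugacy recorded in Remark \ref{remark:conj-pairs-blocks}, which ultimately rests on the fusion-system isomorphism coming from Theorem \ref{thm:equ-blocks}; one must also verify that $y_1$ stabilises the individual block $c$ and not merely $e_s^{\bL^F}$, which is exactly what guarantees $(Q',C_{Q'})\in\bNBr(\bL^F,c)$ and is where the $\tbL^F\cA$-equivariance of the block correspondence is used.
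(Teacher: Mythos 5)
Your proposal is correct and follows essentially the same route as the paper's proof: decompose an element of $\N_{\tbG^F\cA}(Q,B_Q)$ through $\tbG^F\cA=\bG^F(\tbL^F\cA)$, transport across $\Phi$ and invoke Remark \ref{remark:conj-pairs-blocks} to replace a $\bG^F$-conjugation by an $\bL^F$-conjugation, and finish via injectivity of $\Phi$. The only (welcome) difference is that you spell out why the $\tbL^F\cA$-component $y_1$ stabilizes the individual block $c$ — needed so that Remark \ref{remark:conj-pairs-blocks} applies — a point the paper leaves implicit.
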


\begin{proof}
	By Lemma \ref{lem:inj-Nor-bl}, one has $\N_{\bG^F}(Q) \N_{\tbL^F \cA}(Q,C_Q)\subseteq \N_{\tbG^F\cA}(Q,B_Q)$.
	Let $gh\in \N_{\tbG^F\cA}(Q,B_Q)$,  where $g\in\bG^F$ and $h\in \tbL^F \cA$.
	Then $(Q,B_Q)^g=(Q,B_Q)^{h^{-1}}:=(Q',B_Q')$.
	By Remark	\ref{remark:conj-pairs-blocks}, there exists $l\in \bL^F$ such that $(Q,B_Q)^l=(Q',B_Q')$.
	In particular, $Q^l=Q^g$ and thus $g=xl$, where $x\in \N_{\bG^F}(Q)$.
	Also by Lemma \ref{lem:inj-Nor-bl},  $(Q,C_Q)^{h^{-1}}=(Q,C_Q)^l=(Q',C_Q')$ such that $(Q,B_Q')=\Phi(Q,C_Q')$.
	So $lh\in \N_{\tbL^F \cA}(Q,C_Q)$.
	We write $gh=xl'$ where $l'=lh$.
	Since $gh\in \N_{\tbG^F\cA}(Q,B_Q)$,
	one has $l'\in \N_{\tbL^F\cA}(Q,B_Q)=\N_{\tbL^F\cA}(Q,C_Q)$.
	Therefore, we have proved that $\N_{\tbG^F\cA}(Q,B_Q)=\N_{\bG^F}(Q) \N_{\tbL^F \cA}(Q,C_Q)$.	
\end{proof}

Denote by $R^{\N_{\bG}(Q)}_{\N_{\bL}(Q)}$ the bijection from $\Irr(C_Q)$ to $\Irr(B_Q)$ induced by the bimodule $H_c^{\dim}(\bY^{\N_{\bG}(Q)}_{\C_{\bU}(Q)},\Lambda)C_Q$ as in Theorem \ref{thm:equ-normaliser}.
It is just the Lusztig induction up to sign.
Now we can establish a Jordan decomposition of weights and hence an equivariant bijection between $\Alp(c)$ and $\Alp(b)$.

\begin{thm}\label{prop:mor-equ-quotient}
	\begin{enumerate}[\rm(i)]
		\item $\mathcal R^{\bG}_{\bL}:(Q,\vhi)\mapsto (Q, R^{\N_{\bG}(Q)}_{\N_{\bL}(Q)}(\vhi))$ gives an $(\tbL^F \mathcal A)_c$-equivariant injection from $\Alp^0(c)$ to $\Alp^0(b)$.
		\item 	This map $\mathcal R^{\bG}_{\bL}$ induces an $(\tbL^F \mathcal A)_c$-eqivariant bijection between $\Alp(c)$ and $\Alp(b)$.
		\end{enumerate}
\end{thm}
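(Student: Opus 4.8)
The plan is to deduce (i) from the machinery set up in the previous subsections, then upgrade it to the bijection in (ii) by a counting argument on conjugacy classes of weights. For (i), fix a weight $(Q,\vhi)\in\Alp^0(c)$. Since $\vhi\in\dz(\N_{\bL^F}(Q)/Q)$ lies in a unique block of $\N_{\bL^F}(Q)/Q$, inflating to $\N_{\bL^F}(Q)$ we get a block $C_Q$ with $(C_Q)^{\bL^F}=c$ (this uses that $(Q,\vhi)$ is a $c$-weight), so $(Q,C_Q)\in\bNBr(\bL^F,c)$ and $\Phi(Q,C_Q)=(Q,B_Q)$ is defined. Theorem \ref{thm:morita-quotient} gives a Morita equivalence between $\Lambda\overline{\N_{\bL^F}(Q)}\bar C_Q$ and $\Lambda\overline{\N_{\bG^F}(Q)}\bar B_Q$; since Morita equivalences preserve defect-zero characters (they send the block to the block and preserve the defect of characters via preservation of vertices of the corresponding simple modules), $R^{\N_{\bG}(Q)}_{\N_{\bL}(Q)}(\vhi)\in\dz(\N_{\bG^F}(Q)/Q)$ lies in $B_Q$, hence $(Q,R^{\N_{\bG}(Q)}_{\N_{\bL}(Q)}(\vhi))\in\Alp^0(b)$. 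Injectivity is clear since $R^{\N_{\bG}(Q)}_{\N_{\bL}(Q)}$ is a bijection on the relevant character sets and $\Phi$ is injective by Lemma \ref{lem:inj-Nor-bl}. For $(\tbL^F\cA)_c$-equivariance one transports the construction by $\tau\in(\tbL^F\cA)_c$: by Lemma \ref{lem:action-lie} the morphism $\tau$ carries the complex $\G\Ga_c(\bY^{\N_{\bG}(Q)}_{\C_{\bU}(Q)},\Lambda)C_Q$ to the analogous complex for $\tau(Q)$ and $\tau(C_Q)$, and by Lemma \ref{lem:inj-Nor-bl} (equivariance of $\Phi$) and the compatibility of Lemma \ref{lem:equivariant-Br-pairs}, the induced bijection satisfies $\tau R^{\N_{\bG}(Q)}_{\N_{\bL}(Q)}(\vhi)=R^{\N_{\tbG}(\tau Q)}_{\N_{\tbL}(\tau Q)}(\tau\vhi)$; this gives $\tau\cdot\mathcal R^{\bG}_{\bL}(Q,\vhi)=\mathcal R^{\bG}_{\bL}(\tau\cdot(Q,\vhi))$.

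For (ii) I would first check that $\mathcal R^{\bG}_{\bL}$ descends to conjugacy classes: if $(Q,\vhi)$ and $(Q',\vhi')$ are $\bL^F$-conjugate then their images are $\bG^F$-conjugate (clear from the construction, using that $\bL^F\le\bG^F$); conversely, if $\mathcal R^{\bG}_{\bL}(Q,\vhi)$ and $\mathcal R^{\bG}_{\bL}(Q',\vhi')$ are $\bG^F$-conjugate, then in particular the Brauer pairs $(Q,B_Q)$ and $(Q',B_{Q'})$ lie in $\Phi(\bNBr(\bL^F,c))$, so by Remark \ref{remark:conj-pairs-blocks} they are already $\bL^F$-conjugate; transporting, we may assume $Q=Q'$ and $B_Q=B_{Q'}$, i.e. $C_Q=C_{Q'}$, and then $R^{\N_{\bG}(Q)}_{\N_{\bL}(Q)}(\vhi)$ and $R^{\N_{\bG}(Q)}_{\N_{\bL}(Q)}(\vhi')$ are conjugate under $\N_{\bG^F}(Q,B_Q)$; by Lemma \ref{lem:nor-wei-sta} this normalizer equals $\N_{\bL^F}(Q)\,\N_{\tbL^F\cA}(Q,C_Q)$ intersected appropriately — more precisely the $\bG^F$-part of it is $\N_{\bG^F}(Q)$ acting through the Morita correspondence, which (again since $R^{\N_{\bG}(Q)}_{\N_{\bL}(Q)}$ intertwines the $\N_{\bL^F}(Q)$-action with a subgroup of the $\N_{\bG^F}(Q)$-action via Ruhstorfer's isomorphism $\N_{\bL^F}(Q,c_Q)/\C_{\bL^F}(Q)\cong\N_{\bG^F}(Q,b_Q)/\C_{\bG^F}(Q)$ recalled in Remark \ref{remark:Brauer-cat-splendid}) forces $\vhi$ and $\vhi'$ to be $\N_{\bL^F}(Q)$-conjugate. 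Hence $\mathcal R^{\bG}_{\bL}$ induces a well-defined injection on conjugacy classes.

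To see this induced map is surjective, i.e. a bijection, I would count using the parametrization of weights via Brauer categories from Remark \ref{remark:wei-fusion-system}: choosing a maximal $c$-Brauer pair $(D,c_D)$ and the corresponding maximal $b$-Brauer pair $(D,b_D)$ from Remark \ref{remark:Brauer-cat-splendid}, the set $\Alp(c)$ is parametrized by pairs consisting of a representative $(Q,c_Q)$ of an equivalence class of $\cF_{(D,c_D)}(\bL^F,c)/\!\sim$ together with a character in $\Irr(C_Q)\cap\dz(\N_{\bL^F}(Q)/Q)$, and similarly for $\Alp(b)$; since $\Theta$ induces an isomorphism of fusion systems $\cF_{(D,c_D)}(\bL^F,c)\cong\cF_{(D,b_D)}(\bG^F,b)$ (Remark \ref{remark:Brauer-cat-splendid}), a set $\mathcal R$ of subgroups indexing the $\sim$-classes on the $\bL^F$-side also indexes them on the $\bG^F$-side, and for each such $Q$ the Morita equivalence of Theorem \ref{thm:morita-quotient} gives a bijection $\Irr(C_Q)\cap\dz(\N_{\bL^F}(Q)/Q)\to\Irr(B_Q)\cap\dz(\N_{\bG^F}(Q)/Q)$; assembling these over $Q\in\mathcal R$ yields a bijection $\Alp(c)\to\Alp(b)$ which by construction agrees with the map induced by $\mathcal R^{\bG}_{\bL}$. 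Equivariance of the induced bijection under $(\tbL^F\cA)_c$ is inherited from (i). The main obstacle I anticipate is the injectivity-on-classes step in the second paragraph: controlling exactly which conjugations on the $\bG^F$-side come from $\N_{\bL^F}(Q)$ requires pinning down that the Morita bijection $R^{\N_{\bG}(Q)}_{\N_{\bL}(Q)}$ is equivariant for the correct subgroup — this is where Ruhstorfer's normalizer isomorphism and Lemma \ref{lem:nor-wei-sta} must be combined carefully, and one must be sure no ``extra'' $\bG^F$-fusion appears beyond what $\Theta$ already accounts for, which is precisely the content of Remark \ref{remark:conj-pairs-blocks}.
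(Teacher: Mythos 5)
Your proposal is correct and follows essentially the same route as the paper: part (i) from Theorem \ref{thm:morita-quotient}, Lemma \ref{lem:inj-Nor-bl} and Lemma \ref{lem:action-lie}, and part (ii) from the parametrization of $\Alp(c)$ and $\Alp(b)$ via Remarks \ref{remark:wei-fusion-system} and \ref{remark:Brauer-cat-splendid}. One remark: your second paragraph (injectivity on conjugacy classes) is more complicated than necessary and partially misdirected — once Remark \ref{remark:conj-pairs-blocks} lets you assume $Q=Q'$ and $C_Q=C_{Q'}$, the conjugating element lies in $\N_{\bG^F}(Q)$, whose conjugation action on $\Irr(\N_{\bG^F}(Q)/Q)$ is trivial (it is inner on the quotient group), so the two image weight characters are literally equal and injectivity of $R^{\N_{\bG}(Q)}_{\N_{\bL}(Q)}$ finishes at once; the appeal to Lemma \ref{lem:nor-wei-sta} and to the normalizer isomorphism of Remark \ref{remark:Brauer-cat-splendid} at that point is not needed and slightly confuses normalizers in $\tbG^F\cA$ with normalizers in $\bG^F$.
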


\begin{proof}
By Lemma  \ref{lem:inj-Nor-bl} and Theorem \ref{thm:morita-quotient}, $(Q,\vhi)\mapsto (Q,R^{\N_{\bG}(Q)}_{\N_{\bL}(Q)}(\vhi))$ gives an  injection between $\Alp^0(c)$ and $\Alp^0(b)$.
Now we show that this is $(\tbL^F \mathcal A)_c$-eqivariant.
Thanks to Lemma \ref{lem:inj-Nor-bl}, it suffices to show that for $(Q,C_Q)\in \bNBr(\bL^F,c)$ and $(Q,B_Q)=\Phi(Q,C_Q)$, the bijection
$$R^{\N_{\bG}(Q)}_{\N_{\bL}(Q)}: \dz(\N_{\bL^F}(Q)/Q)\cap \Irr(C_Q)\to \dz(\N_{\bG^F}(Q)/Q)\cap \Irr(B_Q)$$
is $\N_{\tbL^F \mathcal A}(Q,C_Q)$-eqivariant,
which follows by  Lemma \ref{lem:action-lie}.
By Remark \ref{remark:wei-fusion-system} and \ref{remark:Brauer-cat-splendid}, the map from (i) induces a bijection between $\Alp(c)$ to $\Alp(b)$ and this gives (ii).
\end{proof}

\begin{rmk}
	In \cite[\S5.2]{FM20}, a Jordan decomposition for weights of many classical groups was obtained	by using the combinatorial description of weights given by Alperin–Fong \cite{AF90} and An \cite{An94}.
	In the case when $\C_{\bG^*}(s)$ is a Levi subgroup of $\bG^*$, those can be also deduced from Theorem \ref{prop:mor-equ-quotient}.	
\end{rmk}

\begin{cor}\label{lem:nor-wei}
	Let $(Q,\vhi)\in \Alp^0(c)$ and $\vhi'=R^{\N_{\bG}(Q)}_{\N_{\bL}(Q)}(\vhi)$.
		Then $\N_{\tbG^F\cA}(Q)_{\vhi'}=\N_{\bG^F}(Q) \N_{\tbL^F \cA}(Q)_\vhi$.
\end{cor}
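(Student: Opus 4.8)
The plan is to deduce this from Lemma \ref{lem:nor-wei-sta} together with the equivariance already established in Theorem \ref{prop:mor-equ-quotient}. First I would fix $(Q,C_Q)\in\bNBr(\bL^F,c)$ to be the pair attached to the weight $(Q,\vhi)$, namely the one with $\vhi\in\dz(\N_{\bL^F}(Q)/Q)\cap\Irr(C_Q)$, and set $(Q,B_Q)=\Phi(Q,C_Q)$, so that $\vhi'\in\dz(\N_{\bG^F}(Q)/Q)\cap\Irr(B_Q)$ by Theorem \ref{prop:mor-equ-quotient}(i). The inclusion $\N_{\bG^F}(Q)\N_{\tbL^F\cA}(Q)_\vhi\subseteq\N_{\tbG^F\cA}(Q)_{\vhi'}$ is the easy direction: $\N_{\bG^F}(Q)$ fixes $\vhi'$ because $\vhi'\in\Irr(\N_{\bG^F}(Q)/Q)$, and any element of $\N_{\tbL^F\cA}(Q)_\vhi$ sends $(Q,\vhi)$ to itself, hence by the $(\tbL^F\cA)_c$-equivariance of $\mathcal R^{\bG}_{\bL}$ it sends $\vhi'=R^{\N_{\bG}(Q)}_{\N_{\bL}(Q)}(\vhi)$ to itself as well.

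For the reverse inclusion I would take $gh\in\N_{\tbG^F\cA}(Q)_{\vhi'}$ with $g\in\bG^F$, $h\in\tbL^F\cA$. Since $gh$ fixes $\vhi'$, it in particular stabilizes the pair $(Q,B_Q)$ (the block $B_Q=\bl_{\N_{\bG^F}(Q)}(\vhi')$ is determined by $\vhi'$), so $gh\in\N_{\tbG^F\cA}(Q,B_Q)$. By Lemma \ref{lem:nor-wei-sta} we may write $gh=xl'$ with $x\in\N_{\bG^F}(Q)$ and $l'\in\N_{\tbL^F\cA}(Q,C_Q)$; tracking through the proof of that lemma one sees $x$ can moreover be taken in $\N_{\bG^F}(Q)$ with $l'=lh$ for a suitable $l\in\bL^F$. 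It then remains to show $l'$ actually fixes $\vhi$, not merely $C_Q$. For this I would argue that $x^{-1}$ fixes $\vhi'$ (since $x\in\N_{\bG^F}(Q)$ does), hence $l'=x^{-1}(gh)$ fixes $\vhi'$; now $l'\in\N_{\tbL^F\cA}(Q,C_Q)$ stabilizes the Morita equivalence of Theorem \ref{thm:morita-quotient} up to the twist of Lemma \ref{lem:action-lie}, so $l'$ fixing $\vhi'=R^{\N_{\bG}(Q)}_{\N_{\bL}(Q)}(\vhi)$ forces $l'$ to fix $\vhi$ by the injectivity and $\N_{\tbL^F\cA}(Q,C_Q)$-equivariance of $R^{\N_{\bG}(Q)}_{\N_{\bL}(Q)}$ recorded in the proof of Theorem \ref{prop:mor-equ-quotient}. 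Thus $l'\in\N_{\tbL^F\cA}(Q)_\vhi$ and $gh=xl'\in\N_{\bG^F}(Q)\N_{\tbL^F\cA}(Q)_\vhi$.

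The main obstacle I anticipate is bookkeeping rather than any new idea: one must be careful that the block $C_Q$ (equivalently $B_Q$) associated to the weight is genuinely recovered from the weight character $\vhi$ (resp. $\vhi'$), and that the equivariance statement for $R^{\N_{\bG}(Q)}_{\N_{\bL}(Q)}$ is available for the relevant twisted action of $\N_{\tbL^F\cA}(Q,C_Q)$ rather than only for $\N_{\bL^F}(Q)$ — this is exactly what the appeal to Lemma \ref{lem:action-lie} inside the proof of Theorem \ref{prop:mor-equ-quotient} provides. Once those two points are in place, the argument is a short diagram-chase parallel to the proof of Lemma \ref{lem:nor-wei-sta}, with $B_Q$ replaced by the finer datum $\vhi'$.
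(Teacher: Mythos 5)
Your proof is correct and takes exactly the approach the paper intends: the paper's proof of this corollary is the single sentence ``This follows by Lemma \ref{lem:nor-wei-sta} directly,'' and your argument spells out the two-inclusion deduction that sentence leaves implicit, using the inclusion $\N_{\tbG^F\cA}(Q)_{\vhi'}\le\N_{\tbG^F\cA}(Q,B_Q)$ together with the $\N_{\tbL^F\cA}(Q,C_Q)$-equivariance of $R^{\N_{\bG}(Q)}_{\N_{\bL}(Q)}$ established in the proof of Theorem \ref{prop:mor-equ-quotient}.
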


\begin{proof}
This follows by Lemma \ref{lem:nor-wei-sta} directly.
\end{proof}

Finally, we give the following result on the extendibility of weight characters.
Note that if $\tau:\bG\to\bG$ is an algebraic automorphism of $\bG$ of finite order,
then the semi-direct product $\bG\rtimes\langle \tau\rangle$ is a (not connected) reductive algebraic group; see \cite{Ma93}. 
Let  $B_Q'=\Tr_{\N_{\tbG^F\cA}(Q,B_Q)}^{\N_{\bG^F}(Q)\N_{\tbL^F\cA}(Q)}(B_Q)$, $C_Q'=\Tr_{\N_{\tbL^F\cA}(Q,B_Q)}^{\N_{\tbL^F\cA}(Q)}(C_Q)$.
Recall that $\bL$ and $e_s^{\bL^F}$ are assumed to be $\cA$-stable.

\begin{prop}\label{prop:ext-wei-char}
Let $A\in\{ K,\cO,k \}$. 
	Assume that  the order of $\sigma:\tbG^F\to \tbG^F$ is invertible in~$A$.
Then the bimodule $H_c^{\dim}(\bY^{\N_{\bG}(Q)}_{C_{\bU}(Q)},A)C_Q$ extends to an $A[\N_{\bG^F}(Q)\times \N_{\bL^F}(Q)^{\opp}\Delta(\N_{\tbL^F \cA}(Q,C_Q))]$-module $M_Q$ and the bimodule $\Ind_{\N_{\bL^F}(Q)\times \N_{\bL^F}(Q)^{\opp}\Delta(\N_{\tbL^F \cA}(Q,C_Q))}^{\N_{\bG^F}(Q) \N_{\tbG^F \cA}(Q)\times \N_{\tbL^F \cA}(Q)^{\opp}}(M_Q)$
induces a Morita equivalence between $A \N_{\bG^F}(Q)\N_{\tbL^F \cA}(Q)B_Q'$ and $A\N_{\tbL^F \cA}(Q)C_Q'$.	
In particular, if $(Q,\vhi)\in \Alp^0(c)$, $\vhi'=R^{\N_{\bG}(Q)}_{\N_{\bL}(Q)}(\vhi)$ and $\bL^F\le H\le \tbL^F \cA$, then $\vhi$ extends to $\N_H(Q)_\vhi$ if and only if $\vhi'$ extends to $\N_{\bG^F H}(Q)_{\vhi'}$.
\end{prop}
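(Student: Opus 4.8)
The plan is to follow the strategy of Theorem~\ref{thm:morita-quotient}, but now carrying along the extra automorphisms from $\N_{\tbL^F\cA}(Q,C_Q)$. By Theorem~\ref{thm:equ-normaliser} the bimodule $H_c^{\dim}(\bY^{\N_{\bG}(Q)}_{\C_{\bU}(Q)},A)C_Q$ already induces a Morita equivalence between $A\N_{\bL^F}(Q)C_Q$ and $A\N_{\bG^F}(Q)B_Q$; the point is that this bimodule is a summand of the cohomology of a Deligne--Lusztig variety, so Lemma~\ref{lem:action-lie} applies. For each $\tau\in\N_{\tbL^F\cA}(Q,C_Q)$, $\tau$ stabilizes $Q$, $\C_\bU(Q)$ and $C_Q$ (the latter by definition of the normalizer of the Brauer pair, via Lemma~\ref{lem:inj-Nor-bl}), and by Lemma~\ref{lem:equivariant-Br-pairs} together with Remark~\ref{remark:conj-pairs-blocks} it also stabilizes $B_Q=\Phi(Q,C_Q)$. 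Hence $\tau^*$ sends the bimodule to an isomorphic bimodule. Since the order of $\sigma$ is invertible in $A$, we may use the standard argument (as in the work of Ruhstorfer, e.g.\ \cite[\S3]{Ru20a}, and Marcus-type extension results) that a bimodule inducing a Morita equivalence, on which a group acts up to isomorphism compatibly with the algebra structure, extends to a bimodule over the group algebra of the semidirect product with $\Delta(\N_{\tbL^F\cA}(Q,C_Q))$: one shows the relevant obstruction cohomology group vanishes because $|\langle\sigma\rangle|$ is invertible. This produces $M_Q$.

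Next I would deduce the claimed Morita equivalence of the larger algebras. By Lemma~\ref{lem:nor-wei-sta}, $\N_{\tbG^F\cA}(Q,B_Q)=\N_{\bG^F}(Q)\N_{\tbL^F\cA}(Q,C_Q)$, so inducing $M_Q$ from $\N_{\bL^F}(Q)\times\N_{\bL^F}(Q)^{\opp}\Delta(\N_{\tbL^F\cA}(Q,C_Q))$ up to $\N_{\bG^F}(Q)\N_{\tbG^F\cA}(Q)\times\N_{\tbL^F\cA}(Q)^{\opp}$ is exactly the situation of the standard criterion for Morita equivalences between algebras of the form $A Ge$ and $A Hf$ induced by inducing an $(A Ne)$-$(A Mf)$-bimodule (this is the mechanism behind, for instance, \cite[Thm.~3.10]{Ru20a} and the Marcus-type arguments used in that paper). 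The block idempotents $B_Q'$ and $C_Q'$ are the corresponding relative traces, and one checks that $M_Q$ carries $C_Q'$ to $B_Q'$ because $\Phi$ is $(\tbL^F\cA)_c$-equivariant (Lemma~\ref{lem:inj-Nor-bl}). Thus $\Ind(M_Q)$ induces a Morita equivalence between $A\N_{\bG^F}(Q)\N_{\tbL^F\cA}(Q)B_Q'$ and $A\N_{\tbL^F\cA}(Q)C_Q'$, as asserted.

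For the final ``in particular'' clause, I would pass to quotients by $Q$ exactly as in Theorem~\ref{thm:morita-quotient} via Lemma~\ref{lem:quotient} and Lemma~\ref{lem:homo-gp}: since the bimodule is built from $\ell$-permutation modules with vertices of the form $\Delta S$ with $Q\le S$, taking $Q$-fixed points/quotients preserves the Morita equivalence and replaces the groups by $\overline{\N_{\bG^F}(Q)}=\N_{\bG^F}(Q)/Q$, etc. A Morita equivalence sending $\vhi$ (viewed in $\Irr(C_Q)\cap\dz(\N_{\bL^F}(Q)/Q)$) to $\vhi'=R^{\N_{\bG}(Q)}_{\N_{\bL}(Q)}(\vhi)$ and commuting with the action of $\N_{\tbL^F\cA}(Q)_\vhi/(\text{the relevant subgroup})$ then transports extensions: for $\bL^F\le H\le\tbL^F\cA$, the group $\N_{\bG^F H}(Q)/\N_{\bG^F}(Q)\cong \N_H(Q)/\N_{\bL^F}(Q)$, and one applies the usual fact that a module inducing a Morita equivalence which is equivariant for a group $A$ yields a bijection of $A$-orbits together with isomorphisms of the associated character triples, so $\vhi$ extends to $\N_H(Q)_\vhi$ iff $\vhi'$ extends to $\N_{\bG^F H}(Q)_{\vhi'}$.

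The main obstacle I expect is the first step: constructing $M_Q$, i.e.\ showing that the Morita bimodule genuinely \emph{extends} (not merely is stable) to the semidirect product with $\Delta(\N_{\tbL^F\cA}(Q,C_Q))$. This requires a careful Clifford-theoretic/cohomological argument — identifying the obstruction class in $H^2$ of the acting group with coefficients in $A^\times$ (or the relevant Picard group) and using the invertibility of $|\langle\sigma\rangle|$ in $A$ to kill it — and making sure the action of $\N_{\tbL^F\cA}(Q,C_Q)$ on the bimodule coming from $\tau^*$ in Lemma~\ref{lem:action-lie} is chosen coherently (i.e.\ forms an honest action up to the relevant scalars, not just up to isomorphism degree by degree). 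Everything after $M_Q$ is exists is a routine transfer through induction and passage to quotients.
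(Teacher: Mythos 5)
Your proposal follows essentially the same route as the paper: the paper's proof is a direct appeal to Ruhstorfer's argument (\cite[Thm.~5.11]{Ru20a}) via Marcus's theorem, whose two inputs are exactly the ones you identify — the stabilizer identity $\N_{\tbG^F\cA}(Q,B_Q)=\N_{\bG^F}(Q)\N_{\tbL^F\cA}(Q,C_Q)$ supplied by Lemma~\ref{lem:nor-wei-sta}, and the extendibility of the bimodule $H_c^{\dim}(\bY^{\N_{\bG}(Q)}_{\C_{\bU}(Q)},A)C_Q$, which the paper simply cites from the proof of \cite[Thm.~5.11]{Ru20a} while you sketch the cohomological-obstruction argument behind it. Your additional handling of the ``in particular'' clause via passage to quotients and transport of extensions is consistent with what the cited machinery yields, so the proposal is correct and matches the paper's approach.
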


\begin{proof}
This can be proven by a similar argument as in \cite[Thm.~5.11]{Ru20a}.
That proof uses a theorem of Marcus \cite[Thm.~3.4]{Ma96} (we also refer to Theorem 1.7 and Lemma 1.11 of \cite{Ru20a}), which needs two conditions: $\N_{\tbG^F\cA}(Q,B_Q)=\N_{\bG^F}(Q) \N_{\tbL^F \cA}(Q,C_Q)$ and
 the extendibility of the $A\N_{\bG^F}(Q)$-$A\N_{\bL^F}(Q)$-bimodule $H_c^{\dim}(\bY^{\N_{\bG}(Q)}_{C_{\bU}(Q)},A)C_Q$.

We note that in \cite[Thm.~5.11]{Ru20a}, the group $Q$ is assumed to be a characteristic subgroup of a defect group  and this condition there is to ensure 
$\N_{\tbG^F\cA}(Q,B_Q)=\N_{\bG^F}(Q) \N_{\tbL^F \cA}(Q,C_Q)$, which follows by Lemma \ref{lem:nor-wei-sta} here.
The extendibility of $H_c^{\dim}(\bY^{\N_{\bG}(Q)}_{C_{\bU}(Q)},\Lambda)C_Q$ was shown in the proof of \cite[Thm.~5.11]{Ru20a}.
\end{proof}


\section{Reduction to quasi-isolated blocks}\label{sec:red-quasi-isolated}

We let $\bG$ be simple and of simply connected type such that $\bG/Z(\bG^F)$ is an abstract simple group in this section and
let $s\in{\bG^*}^F$ be a  semisimple $\ell'$-element which is not  strictly
quasi-isolated in $\bG^*$. 
Assume that $\bL^*$ is the minimal $F$-stable Levi subgroup of $\bG^*$ containing $\C^\circ_{{\bG^*}}(s)\C_{{\bG^*}}(s)^F$ .
Let $\iota:\bG\hookrightarrow\tbG$ be a regular embedding as in \cite[2.B]{MS16}.

If $(\bG,F)$ is not of type $\mathsf D_4$, then by \cite[Prop.~4.9]{Ru20a}, there exists a Frobenius endomorphism $F_0:\tbG\to\tbG$  and a bijective morphism $\sigma:\tbG\to\tbG$ such that $\cA:=\langle F_0,\sigma\rangle$ satisfies that
$F$ is a power of $F_0$,
$F_0\circ \sigma=\sigma\circ F_0$ as morphisms of $\tbG$,
and $\tbG^F\cA$ induces the stabilizer of $e_s^{\bG^F}$ in $\mathrm{Out}(\bG^F)$.
If $(\bG,F)$ is of type $\mathsf D_4$, then we let the group $\cA$ be as in \cite[\S2.3]{Ru20b}.
Note that $\bG^F\cA/\bG^F$ is abelian or isomorphic to the direct product of a
cyclic group and $\fS_3$.
In addition, there is a Levi subgroup $\bL$ of $\bG$ in duality to $\bL^*$ such that $\cA$ stabilizes $\bL$ and $e_s^{\bL^F}$.
Let $\tbL=\bL Z(\tbG)$.

\subsection{Going to Levi subgroups}

We abbreviate $\bG^F$, $\tbG^F$, $\bL^F$, $\tbL^F$ to $G$, $\tG$, $L$, $\tL$ respectively.

\begin{lem}\label{lem:bi-Br-ch}	
The bijection $R^{\bG}_{\bL}:\IBr(c)\to\IBr(b)$	is $(\tL\cA)_c$-equivariant and $R^{\tbG}_{\tbL}:\IBr(\tL\mid \IBr(c))\to \IBr(\tG\mid \IBr(b))$ is $\IBr(\tG/G)\rtimes \cA$-equivariant.
\end{lem}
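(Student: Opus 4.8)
\textbf{Proof plan for Lemma \ref{lem:bi-Br-ch}.}
The plan is to deduce both equivariance statements from the Morita/Rickard-theoretic construction of $R^{\bG}_{\bL}$ together with the equivariance already recorded for the Bonnaf\'e--Dat--Rouquier bimodule and its Brauer-quotients. For the first assertion, recall that $R^{\bG}_{\bL}$ is induced by the bimodule $H^{\dim(\bY^{\bG}_{\bU})}(\mathcal C)e_s^{\bL^F}$ of Theorem \ref{thm:equ-blocks}, and that every $\tau\in\tbL^F\cA$ acts on the Deligne--Lusztig variety by $\sigma\mapsto\sigma^*$ as in Lemma \ref{lem:action-lie}, sending $\bY^{\bG}_{\bU}$ to $\bY^{\bG}_{\tau(\bU)}$ and inducing an isomorphism of the corresponding cohomology bimodules twisted by $\tau$. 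When $\tau$ stabilizes $c$ (hence $b=\Theta$-image block, cf. Lemma \ref{lem:equivariant-Br-pairs}) and fixes $\bU$ up to the appropriate conjugacy, this isomorphism intertwines $R^{\bG}_{\bL}$ with its $\tau$-conjugate; so for $\psi\in\IBr(c)$ one gets $R^{\bG}_{\bL}(\psi)^\tau = R^{\bG}_{\bL}(\psi^\tau)$. I would phrase this exactly as in Ruhstorfer \cite{Ru20a} (this is essentially the content making $\Theta$ and $\Phi$ equivariant, Lemmas \ref{lem:equivariant-Br-pairs} and \ref{lem:inj-Nor-bl}): the $\tbL^F\cA$-action is compatible with the Morita bimodule, so the induced bijection on Brauer characters is $(\tL\cA)_c$-equivariant.

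For the second assertion I would combine the first with Lemma \ref{lem:equ-ind-mul}. By part (i) of that lemma, $\Ind^{\tbG^F}_{\bG^F}\circ R^{\bG}_{\bL} = R^{\tbG}_{\tbL}\circ \Ind^{\tbL^F}_{\bL^F}$ on $\IBr(\bL^F,e_s^{\bL^F})$; since every character in $\IBr(\tL\mid\IBr(c))$ is a constituent of $\Ind^{\tbL^F}_{\bL^F}\psi$ for some $\psi\in\IBr(c)$, and Clifford theory identifies $\IBr(\tL\mid\IBr(c))$ with $\tL$-orbits of such inductions, the map $R^{\tbG}_{\tbL}$ is determined by $R^{\bG}_{\bL}$ through this commuting square. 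The $\cA$-equivariance of $R^{\tbG}_{\tbL}$ then follows from the $(\tL\cA)_c$-equivariance of $R^{\bG}_{\bL}$ together with the fact that $\cA$ commutes with $\Ind^{\tbG^F}_{\bG^F}$ and with the $\tG$-conjugation defining the orbits; here one uses that $\cA$ stabilizes $\bL$, $e_s^{\bL^F}$ and hence $c$. The $\IBr(\tbG^F/\bG^F)$-equivariance is handled by part (ii) of Lemma \ref{lem:equ-ind-mul}: for $\la\in\IBr(\tbG^F/\bG^F)$ one has $\la\, R^{\tbG}_{\tbL}(\tpsi)=R^{\tbG}_{\tbL}(\Res^{\tbG^F}_{\tbL^F}(\la)\,\tpsi)$, which is precisely the statement that $R^{\tbG}_{\tbL}$ intertwines the tensoring actions of the linear characters on the two sides.

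I expect the main subtlety to be bookkeeping rather than a deep obstacle: one must be careful that the action of $\tbL^F\cA$ on the cohomology bimodule is the ``correct'' one relative to the choice of parabolic $\bU$, i.e. that changing $\bU$ to $\tau(\bU)$ does not alter $R^{\bG}_{\bL}$ (independence of the Lusztig functor on the parabolic, in the relevant disconnected/automorphism-equivariant setting, as in \cite{Ru20a, Ru20b}). Once that independence is granted — which is exactly what Lemma \ref{lem:action-lie} provides — the argument is a formal consequence of the equivariance of $\Theta$, $\Phi$ (Lemmas \ref{lem:equivariant-Br-pairs}, \ref{lem:inj-Nor-bl}) and the transitivity formula of Lemma \ref{lem:equ-ind-mul}. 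Accordingly I would keep the proof short, citing \cite[Thm.~5.11]{Ru20a} or the analogous statement there for the first equivariance and deriving the $\tbG$-version from Lemma \ref{lem:equ-ind-mul}.
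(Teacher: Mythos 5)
Your overall route is the paper's: the paper's entire proof is ``this follows by Lemma \ref{lem:action-lie}, and by Lemma \ref{lem:equ-ind-mul} the map $R^{\tbG}_{\tbL}$ is $\IBr(\tG/G)$-equivariant.'' Your treatment of the first assertion (the isomorphism $\tau^*$ of Lemma \ref{lem:action-lie} intertwines the Bonnaf\'e--Dat--Rouquier bimodule with its $\tau$-twist, hence the induced bijection on $\IBr$ is $(\tL\cA)_c$-equivariant) and of the $\IBr(\tG/G)$-equivariance via Lemma \ref{lem:equ-ind-mul}(ii) matches the intended argument exactly.

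There is, however, a soft spot in how you get the $\cA$-equivariance of $R^{\tbG}_{\tbL}$. You propose to deduce it from the commuting square $\Ind^{\tG}_{G}\circ R^{\bG}_{\bL}=R^{\tbG}_{\tbL}\circ\Ind^{\tL}_{L}$ together with the $\cA$-equivariance of $R^{\bG}_{\bL}$. But induction is not injective on irreducible characters: that identity only shows that $R^{\tbG}_{\tbL}$ maps the multiset of constituents of $\Ind^{\tL}_{L}\psi^a=(\Ind^{\tL}_{L}\psi)^a$ to the multiset of constituents of $(\Ind^{\tG}_{G}R^{\bG}_{\bL}(\psi))^a$, so $R^{\tbG}_{\tbL}(\tpsi^a)$ and $R^{\tbG}_{\tbL}(\tpsi)^a$ are a priori only equal up to a permutation of the irreducible constituents lying over $R^{\bG}_{\bL}(\psi^a)$; the $\IBr(\tG/G)$-action is transitive on these but does not by itself single out the right one. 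The clean (and intended) argument is simply to apply Lemma \ref{lem:action-lie} a second time, now with $\bG$ replaced by $\tbG$ and $\bL$ by $\tbL$: every $\tau\in\cA$ gives an isomorphism $\G\Ga_c(\bY^{\tbG}_{\tau(\bU)},\Lambda)\simeq{}^\tau\G\Ga_c(\bY^{\tbG}_{\bU},\Lambda)^\tau$, and the resulting compatibility of the bimodule $H^{\dim}(\bY^{\tbG}_{\bU})e_{\ts}^{\tbL^F}$ with the $\cA$-action yields the $\cA$-equivariance of $R^{\tbG}_{\tbL}$ directly, exactly in parallel with your argument for $R^{\bG}_{\bL}$. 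With that substitution your proof is complete.
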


\begin{proof}
This follows by  Lemma \ref{lem:action-lie}.	
Note that by
Lemma \ref{lem:equ-ind-mul},
$R^{\tbG}_{\tbL}$ is $\IBr(\tG/G)$-equivariant.
\end{proof}	

We denote by $\Alp(G,e_{s}^G)$ the union of the  $\Alp(b)$, where $b$ runs through the blocks of $G$ in $s$.

\begin{prop}\label{prop:iBAW-levi}
Assume that  the following hold.
\begin{enumerate}[\rm(i)]
	\item \begin{enumerate}[\rm(a)]
		\item In every $\tG$-orbit of $\IBr(G,e_s^G)$ there exists a Brauer character $\psi'\in\IBr(G,e_s^G)$ such that $(\tG \cA)_{\psi'}=\tG_{\psi'}\cA_{\psi'}$ and ${\psi'}$ extends to $G \cA_{\psi'}$.
		\item For any weight $(Q,\varphi)\in \Alp(L,e_s^L)$, the weight character $\varphi$ extends to $\N_{\tL}(Q)_\vhi$.
		\end{enumerate}
	\item There exists a blockwise $\Lin_{\ell'}(\tL/L)\rtimes \cA$-equivariant bijection $$\tilde f:\IBr(\tL\mid \IBr(L,e_s^L))\to \Alp(\tL\mid \Alp(L,e_s^L)).$$ 
	\item There exists a blockwise $(\tL \cA)$-equivariant bijection $f:\IBr(L,e_s^L)\to\Alp(L,e_s^L)$ such that
	\begin{enumerate}[\rm(a)]
		\item for any $\psi\in\IBr(L,e_s^L)$ and $f(\psi)=\overline{(Q,\vhi)}$, 
	if $\psi$ extends to a subgroup $H$ of $(L\cA)_\psi$, then $\vhi$ extends to $\N_H(Q)$,
			\item  $\tilde f(\IBr(\tL\mid\psi))=\Alp(\tL\mid f(\psi))$ for any $\psi\in\IBr(L,e_s^L)$.
	\end{enumerate}
	\item For every $\psi\in\IBr(c)$, $\tpsi\in\IBr(\tL\mid\psi)$ and 		$f(\psi)=\overline{(Q,\vhi)}$, $\tilde f(\tpsi)=\overline{(\tQ,\tvhi)}$	 the following hold:  $\bl(\hpsi)=\bl(\widehat\vhi)^{\tL_\psi}$, where 	 $\hpsi\in\IBr(\tL_\psi\mid\psi)$ is the Clifford correspondent of $\tpsi$ and	 $\widehat\vhi\in\Irr(\N_{\tL}(Q)_\vhi\mid\vhi)$ is the Clifford correspondent of $\Delta_\vhi^{-1}(\tpsi)$. 
\end{enumerate}
Then any block  $b$  in $s$ is BAW-good.	
\end{prop}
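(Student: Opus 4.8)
The plan is to verify the hypotheses of the Brough--Sp\"ath criterion (Theorem \ref{thm:criterion-block}) for the groups $G$, $\tG$ and $E=\cA$, with $B$ the set of $\ell$-blocks of $G$ in $s$; the structural requirements \ref{thm:criterion-block}(i)(a),(b) are supplied by the choice of $\cA$ and the standard properties of the regular embedding $\iota\colon\bG\hookrightarrow\tbG$ recalled at the start of this section. The heart of the argument is to transport hypotheses (i), (ii) and (iii) from $\bL$ up to $\bG$ through the Bonnaf\'e--Dat--Rouquier equivalence (Theorem \ref{thm:equ-blocks}) and its local and quotient versions developed in Section \ref{sec:lie-type-equivalence}. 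First I would record the structural identity $\tG=G\tL$ (hence $\tG\cA=G\tL\cA$), which follows from $\tbL=\bL Z(\tbG)$ and $Z(\tbG)\cap\bL=Z(\tbG)\cap\bG$ via Lang's theorem; since $G$ acts trivially on $\IBr(G)$ and on $\Alp(G)$, any $\tL\cA$-equivariant map between unions of blocks and the corresponding sets of weights is automatically $\tG\cA$-equivariant.

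Next I would build the needed bijections. Write $R^{\bG}_{\bL}\colon\IBr(L,e_s^L)\to\IBr(G,e_s^G)$ for the bijection assembled from Theorem \ref{thm:equ-blocks} and $\mathcal R^{\bG}_{\bL}\colon\Alp(L,e_s^L)\to\Alp(G,e_s^G)$ for the Jordan decomposition of weights assembled from Theorem \ref{prop:mor-equ-quotient} (both over the blocks in $s$), and set $\Omega:=\mathcal R^{\bG}_{\bL}\circ f\circ(R^{\bG}_{\bL})^{-1}$. This map is blockwise and, by Lemma \ref{lem:bi-Br-ch}, Theorem \ref{prop:mor-equ-quotient} and the previous paragraph, $\tG\cA$-equivariant, which gives \ref{thm:criterion-block}(ii). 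Applying the same constructions to the pair $(\tbG,\tbL)$ and the semisimple $\ell'$-elements $\ts$ of $(\tbG^*)^{F}$ above $s$ — legitimate since $\C_{\tbG^*}(\ts)\le\tbL^*$ and $\cA$ stabilizes $\tbL$ and the corresponding sum of block idempotents — produces bijections $R^{\tbG}_{\tbL}$ and $\mathcal R^{\tbG}_{\tbL}$, and I set $\wOm:=\mathcal R^{\tbG}_{\tbL}\circ\tilde f\circ(R^{\tbG}_{\tbL})^{-1}$, which is again blockwise. Its $\Lin_{\ell'}(\tG/G)\rtimes\cA$-equivariance follows from Lemma \ref{lem:bi-Br-ch} (for $R^{\tbG}_{\tbL}$), hypothesis (ii) (for $\tilde f$), and the weight analogue of Lemma \ref{lem:equ-ind-mul}(ii) (for $\mathcal R^{\tbG}_{\tbL}$), while the identity $\wOm(\IBr(\tG\mid\psi))=\Alp(\tG\mid\Omega(\psi))$ follows from the induction-compatibility $\Ind^{\tG}_{G}\circ R^{\bG}_{\bL}=R^{\tbG}_{\tbL}\circ\Ind^{\tL}_{L}$ of Lemma \ref{lem:equ-ind-mul}(i), its weight counterpart, and hypothesis (iii)(b); this gives \ref{thm:criterion-block}(iii).

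It remains to transport the extendibility and stabilizer conditions. Condition \ref{thm:criterion-block}(i)(d) is immediate from hypothesis (i)(b) and Proposition \ref{prop:ext-wei-char} with $H=\tL$ (using $G\tL=\tG$); condition \ref{thm:criterion-block}(i)(c) follows by transporting — via the Clifford theory of $G\unlhd\tG$ and the induction-compatibility of Lemma \ref{lem:equ-ind-mul}(i) — the analogous extendibility over $L$, which is part of (or follows from) hypothesis (i). For \ref{thm:criterion-block}(iv) I would take, in each $\tG$-orbit of $\IBr(B)$, the character $\psi_0:=R^{\bG}_{\bL}(\psi)$ with $\psi$ the ``good'' character provided by hypothesis (i)(a); equivariance of $R^{\bG}_{\bL}$ together with $\tG=G\tL$ gives $(\tG\cA)_{\psi_0}=G(\tL\cA)_\psi=\tG_{\psi_0}\rtimes\cA_{\psi_0}$, and extendibility of $\psi_0$ to $G\rtimes\cA_{\psi_0}$ is transferred from that of $\psi$ to $L\cA_\psi$ via the $\cA$-equivariant form of the Bonnaf\'e--Dat--Rouquier equivalence (a Marcus-type argument as in the proof of Proposition \ref{prop:ext-wei-char}), which is (iv)(a). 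For (iv)(b), writing $\Omega(\psi_0)=\overline{(Q,\vhi')}$ with $\vhi'=R^{\N_{\bG}(Q)}_{\N_{\bL}(Q)}(\vhi)$ and $f(\psi)=\overline{(Q,\vhi)}$, hypothesis (iii)(a) yields that $\vhi$ extends to $\N_{L\cA_\psi}(Q)$, whence Proposition \ref{prop:ext-wei-char} gives the extension of $\vhi'$ — equivalently, by Lemma \ref{lem:ext-def-zero} since the Sylow subgroups of $\cA$ at primes $\ne\ell$ are abelian, of $(\vhi')^0$ — to $\N_{G\cA_\psi}(Q)$, while the splitting $(\tG\cA)_{Q,\vhi'}=\tG_{Q,\vhi'}(G\cA)_{Q,\vhi'}$ follows from Lemma \ref{lem:nor-wei-sta} and Corollary \ref{lem:nor-wei}. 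Once \ref{thm:criterion-block}(i)--(iv) are verified, Theorem \ref{thm:criterion-block} shows that every $b\in B$, i.e.\ every block in $s$, is BAW-good.

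I expect the main obstacle to be bookkeeping the $\cA$-equivariance uniformly across all the character and weight bijections — in particular, assembling the block-by-block equivalences of Section \ref{sec:lie-type-equivalence} into maps defined over the whole idempotent $e_s^{\bG^F}$ and its analogue over $\tbG^F$ while tracking how $\cA$ permutes the constituent blocks — and matching the precise semidirect-product conditions in \ref{thm:criterion-block}(iv)(a),(b) against the normalizer identity of Lemma \ref{lem:nor-wei-sta}; the equivariance itself is controlled throughout by Lemma \ref{lem:action-lie} and the equivariant Brauer-pair correspondence of Lemma \ref{lem:equivariant-Br-pairs}.
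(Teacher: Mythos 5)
Your overall strategy --- verifying the Brough--Sp\"ath criterion by transporting the hypotheses from $\bL$ to $\bG$ through the global, local and quotient versions of the Bonnaf\'e--Dat--Rouquier equivalence --- is the same as the paper's, and most individual steps (the definitions of $\Omega$ and $\wOm$, their equivariance, and conditions (i.d), (iv.a), (iv.b)) match the paper's proof. There is, however, one genuine gap: you apply Theorem \ref{thm:criterion-block} with $E=\cA$ and assert that the structural requirements (i)(a),(b) are ``supplied by the choice of $\cA$''. But condition (i)(b) of the criterion demands $\tG E/Z(\tG)\cong\Aut(G)$, whereas $\tbG^F\rtimes\cA$ only induces the stabilizer of $e_s^{\bG^F}$ in $\Out(\bG^F)$, which is in general a proper subgroup; so the criterion cannot be invoked directly with $E=\cA$. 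The paper instead takes $E=\cB$, the group of \cite[\S2.1]{Ru20b} satisfying $\tG\cB/Z(\tG)\cong\Aut(G)$, checks (i.a),(i.b) for $\cB$, and then transfers the verification of the remaining conditions from $\cB$ to $\cA$: for $\chi\in\IBr(B)$ the images of $\cA_\chi$ and $\cB_\chi$ in $\Out(G)$ coincide, but when these are non-cyclic one has $\cA_{\chi}=\langle \mathrm{ad}(x)\gamma, F_0^i\rangle$ versus $\cB_{\chi}=\langle \gamma, F_0^i\rangle$, and one must argue (as in \cite[Lemma 2.3]{Ru20b}, together with a local analogue for weight characters) that $\chi$ extends to $G\cA_\chi$ if and only if it extends to $G\cB_\chi$. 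This transfer is needed both globally and locally and is not automatic; without it your verification only establishes the criterion for a group $E$ that fails hypothesis (i)(b).

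A smaller point: condition (i)(c) of the criterion (every element of $\IBr(B)$ extends to its stabilizer in $\tG$) does not follow from hypothesis (i) of the proposition by transport, since hypothesis (i)(a) only provides extensions to $L\cA_\psi$ for one representative per $\tL$-orbit and says nothing about stabilizers in $\tL$. The paper obtains (i.c) directly from \cite[Thm.~B]{Ge93} applied to $G\unlhd\tG$, with no transport from $\bL$ needed. You should also record the type $D_4$ adjustments in (iv.b), which the paper handles by a separate argument on Sylow $2$- and $3$-subgroups.
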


\begin{proof}
We will verify the criterion of the inductive BAW condition for $b$ via Theorem \ref{thm:criterion-block}.	
Let $\cB$ be the group defined in \cite[\S2.1]{Ru20b}.
Then the group $\tG$, $E=\cB$ satisfies Condition  \ref{thm:criterion-block} (i.a) and (i.b).
But for the other conditions of Theorem \ref{thm:criterion-block}, we can transfer from $\cB$ to $\cA$. 
This is entirely analogous with the arguments in \S 2.2--\S 2.5  of \cite{Ru20b},
which means for Condition  \ref{thm:criterion-block} (ii)--(iv), we may let $E=\cA$.
In fact, if $\chi\in \IBr(G)$, then $\cA_{\chi}$ and $\cB_\chi$ coincide as the subgroup of $\Out(G)$.
If moreover $\bG^F$ is not of type $\mathsf D_4$ and $\cA_\chi$ is not cyclic, then $\cA_{\chi}=\langle \mathrm{ad}(x)\gamma, F_0^i \rangle$
and $\cB_{\chi}=\langle \gamma, F_0^i \rangle$, 
where $\gamma$ is the graph automorphism of $\bG$ defined as in \cite[4.1]{Ru20a}, $x\in \bG^{F_0}$ and $\mathrm{ad}(x)$ denotes the inner automorphism of $G$ induced by $x$.
Similar as in the proof of \cite[Lemma 2.3]{Ru20b}, one can prove
$\chi$ extends to $G\cA_\chi$ if and only if it extends to $G\cB_\chi$.
A local version of such argument also holds, as \cite[Lemma 2.4]{Ru20b}.
We also note that when $\bG^F$ is of type $\mathsf D_4$,  similar results as in \cite[2.3]{Ru20b} also hold here.

By \cite[Thm.~B]{Ge93}, Condition  \ref{thm:criterion-block} (i.c) holds; see also \cite[Thm.~1.7.15]{GM20}.
Also, every Brauer character $\psi\in \IBr(L)$ extends to its stabilizer in $\tL_\psi$.
According to (i.b),  
for every weight $(Q,\vhi)$ in $\Alp^0(L,e_s^L)$, the weight character $\vhi$ extends to $\N_{\tL}(Q)_\vhi$.
By Proposition \ref{prop:ext-wei-char}, 
for every weight $(Q,\vhi')$ in $\Alp^0(G,e_s^G)$, the weight character $\vhi'$ extends to $\N_{\tG}(Q)_{\vhi'}$.
Thus Condition  \ref{thm:criterion-block} (i) holds.

By Lemma \ref{lem:bi-Br-ch}, the bijection
$R^{\bG}_{\bL}:\IBr(L,e_s^L)\to\IBr(G,e_s^G)$	is $(\tL\cA)$-equivariant and
by Theorem \ref{prop:mor-equ-quotient},
$\mathcal R^{\bG}_{\bL}$ gives a $(\tL \mathcal A)$-eqivariant bijection between $\Alp(L,e_s^L)$ and $\Alp(G,e_s^G)$.
We define $\Omega:\IBr(G,e_s^G)\to\Alp(G,e_s^G)$ to be the bijection which makes the following diagram commutative:
	\begin{align*}
	\xymatrix{
		&\IBr(L,e_s^L) \ar[r]^{R^{\bG}_{\bL}\ }\ar[d]_{f} & \IBr(G,e_s^G)\ar[d]_{\Omega}\\
		&\Alp(L,e_s^L)\ar[r]^{\mathcal R^{\bG}_{\bL}\ } & \Alp(G,e_s^G)}
\end{align*}
Then $\Omega$ is also $(\tL\cA)$-equivariant and then is $(\tG\cA)$-equivariant since $\tG\cA=G (\tL\cA)$.
So Condition  \ref{thm:criterion-block} (ii) holds.

By \cite[Prop.~1.1]{BR06}, one has the canonical isomorphism
$$\Ind^{\tG\ti \tL^{\opp}}_{(G\ti L^{\opp})\Delta \tL}\G\Ga_c(\bY_{\bU}^{\bG},\cO)e_s^{L} \simeq \G\Ga_c(\bY_{\bU}^{\tbG},\cO)e_s^{L}$$
in $\mathrm{Ho}^b(\cO(\tG\ti\tL^{\opp}))$.
According to \cite[Lemma 7.4]{BDR17}, the complex $\G\Ga_c(\bY_{\bU}^{\tbG},\cO)^{\mathrm{red}}e_s^{L}$
induces a splendid Rickard equivalence between $\cO\tL e_s^L$ and $\cO\tG e_s^G$.

Let $b$ be a block of $\bG^F$ in $s$.
If $\tilde b$ is a block of $\tbG$ covering $b$, then there exists a semisimple $\ell'$-element $\ts$ of $(\tbG^*)^F$ such that $\iota^*(\ts)=s$ and $\tilde b$ is in $\ts$.
Here, $\iota^*:\tbG^*\twoheadrightarrow \bG$ denotes the surjection induced by $\iota:\bG\hookrightarrow\tbG$.
In particular, $\C_{\tbG^*}(\ts)$ is contained in $\tbL^*$ according to \cite[2.A]{Bo05}. 
If $\tilde b'$ is another block of $\tG$ covering $b$, then $\tilde b'$ is in $\tilde z\ts$ for some $\ell'$-element $\tilde z\in Z((\tbG^*)^F)$.
Thus similarly as in the above paragraph, we can obtain a blockwise  bijection
\[\wOm: \IBr(\tG\mid \IBr(G,e_s^G))\to \Alp(\tG\mid \Alp(G,e_s^G))\] which is $\Lin_{\ell'}(\tG/G) \rtimes \cA$-equivariant by Lemma \ref{lem:equ-ind-mul} (ii) and\cite[Lemma~2.9~(b)]{Ru20b}.
In addition, by  Lemma \ref{lem:equ-ind-mul} (i), $\wOm(\IBr(\tG\mid \psi))=\Alp_\ell(\tG\mid \Omega(\psi))$ for every $\psi\in \IBr(G,e_s^G)$.
Therefore,  Condition  \ref{thm:criterion-block} (iii.a) and (iii.b) hold.
The proof of Condition  \ref{thm:criterion-block} (iii.c) is similar to \cite[Lemma 2.14]{Ru20b}.

By (i.a),   Condition  \ref{thm:criterion-block} (iv.a) holds.
Now we only need to verify Condition  \ref{thm:criterion-block} (iv.b).
Using (i.a), one has that  
in every $\tG$-orbit of $\IBr(G,e_s^G)$ there exists a Brauer character $\psi'\in\IBr(G,e_s^G)$ such that $(\tG \cA)_{\psi'}=\tG_{\psi'}\cA_{\psi'}$ and ${\psi'}$ extends to $G \cA_{\psi'}$.
Since $R^{\bG}_{\bL}$ is $(\tL \cA)$-equivariant, 
$(\tL \cA)_\psi=\tL_\psi \cA_\psi$
for $\psi\in\IBr(L,e_s^L)$ with $\psi'=R^{\bG}_{\bL}(\psi)$.
By \cite[Thm.~5.8]{Ru20b},  $\psi$ extends to $L \cA_\psi$.
Let  $\overline{(Q,\varphi')}=\Omega(\psi')$,
where $\psi'$ is as in the above paragraph.
First, $(\tG\cA)_{Q,\varphi'} = \tG_{Q,\varphi'} (G\cA)_{Q,\varphi'}$
since $\Omega$ is $(\tG\cA)$-equivariant.
Now $\psi$ extends to $L \cA_\psi$, by (ii.a), one has
$\vhi$ extends to $\N_{L\cA}(Q)_{\vhi}$.
By Proposition \ref{prop:ext-wei-char},  
 $\varphi'$ extends to $(G\rtimes \cA)_{Q,\varphi'}$.
 Note that if $\bG^F$ is of type $\mathsf D_4$, then a similar statement of \cite[Prop.~2.13]{Ru20b} for Brauer characters holds and the argument is similar with that of \cite[Thm.~2.12]{Ru20b}, by considering the extendibility of characters to Sylow 2- and 3-subgroups. 
 This complete the proof.
\end{proof}

In the spirit of Theorem \ref{thm:criterion-block} (iv.a) and
Proposition \ref{prop:iBAW-levi} (i.a), we make the following assumption:

\begin{amp}\label{assumption:stabilizer-extend}
	In every $\tbG^F$-orbit of $\IBr(\bG^F)$ there exists a Brauer character $\psi\in\IBr(\bG^F)$ such that $(\tbG^F \cA)_\psi=\tbG^F_\psi \cA_\psi$ and $\psi$ extends to $\bG^F \cA_\psi$.
\end{amp}

If  Assumption \ref{assumption:stabilizer-extend} is true then we say that it \emph{holds for $(\bG,F)$}.

\begin{rmk}\label{rmk:untri-basic-set}
	In the investigation of the inductive condition of McKay conjecture (cf. \cite[\S 10]{IMN07}), Cabanes and Sp\"ath proved in a series of papers \cite{CS17a,CS17b,CS19,Sp21a,Sp21b} for ordinary characters that:
	in every $\tG$-orbit of $\Irr(G)$ there exists a character $\chi\in\Irr(G)$ such that $(\tG \cA)_\chi=\tG_\chi \cA_\chi$ and $\chi$ extends to $G \cA_\chi$.
	They called this property the $A(\infty)$ condition.
	Thus Assumption \ref{assumption:stabilizer-extend} holds if $G$ has a $(\tG\cA)$-stable unitriangular basic set.
	
	Such unitriangular basic sets were constructed by Kleshchev--Tiep \cite{KT09} and  Denoncin \cite{De17} for $G=\SL_n(q)$ and $\SU_n(q)$, and then  Assumption \ref{assumption:stabilizer-extend} holds for $G$ and any semisimple $\ell'$-element $s$; see also \cite[Thm.~8.1]{FLZ20a}.
	When $s=1$,  the set of unipotent characters forms a unitriangular basic set when both $p$ and $\ell$ are good for $\bG$ by recent work of Brunat, Dudas and Taylor \cite{BDT20}, and then Assumption \ref{assumption:stabilizer-extend} is true for this case.
	For unipotent blocks of certain classical groups at the bad prime $\ell=2$, Chaneb \cite{Ch20} proved that there are also unitriangular basic sets; for example,  Assumption \ref{assumption:stabilizer-extend} holds for all irreducible Brauer characters of symplectic groups at the prime 2, see \cite[Cor.~4.6]{FM20}.
	For more results on	the unitriangular basic sets, see \cite[\S7]{FS21} and \cite[\S9.3.3]{Cr19}.
\end{rmk}

\subsection{Consider quasi-isolated blocks}

We consider the class $\mathcal H_{\bG}$ of pairs $(\bH,F')$ consisting of a simple algebraic group $\bH$ of simply connected type over $\overline{\F}_p$ with Steinberg map $F':\bH\to\bH$ such that
$\bH^{F'}/Z(\bH^{F'})$ is an abstract simple group and
 the Dynkin diagram of $\bH$ is isomorphic to a subgraph of the Dynkin diagram of $\bG$.

\begin{hpo}\label{Hypo-quasi-isolated}
Assume that for $(\bG,F)$ and a prime $\ell$ (different from $p$) the following holds: 
\begin{enumerate}[\rm(a)]
\item Assumption \ref{assumption:stabilizer-extend} holds for $(\bG,F)$, and 
\item  if  $(\bH,F')\in \mathcal H_{\bG}$ and 
$b$ is a  strictly 
quasi-isolated $\ell$-block of $\bH^{F'}$, then there exists an iBAW-bijection for $b$.
\end{enumerate}
\end{hpo}

Note that if the block $b$ is BAW-good, then Hypothesis \ref{Hypo-quasi-isolated}  (b) holds.
Recall that a semisimple element $s\in{\bH^*}^{F'}$ is  strictly quasi-isolated means $\C_{{\bH^*}^{F'}}(s)\C^\circ_{\bH^*}(s)$ is not contained in a proper Levi subgroup of $\bH^*$.

\begin{prop}\label{prop:ibaw-bij-c0}
	Keep the notation of Proposition \ref{prop:iBAW-levi} and assume that Hypothesis \ref{Hypo-quasi-isolated} (b)  holds for $(\bG,F)$ and $\ell$.
Let $c_0$ be a strictly 
quasi-isolated block of $L_0:=[\bL,\bL]^F$.
Then there exists an iBAW-bijection between $\IBr(c_0)$ and $\Alp(c_0)$.
\end{prop}

\begin{proof}
The proof is similar to \cite[Cor.~6.3]{NS14} and \cite[Prop.~3.7]{Ru20b}.
First, by \cite[Prop.~12.14]{MT11}, $[\bL,\bL]$ is a semisimple algebraic group and of simply connected type, \emph{i.e.},
$$[\bL,\bL]=\bH_1\ti \cdots\ti \bH_t$$
where the $\bH_i$'s are simple algebraic groups and of simply connected type.
Then $$[\bL,\bL]^*=\bH_1^* \ti \cdots \ti\bH_t^*$$ and $\bH^*_i$'s are simple groups of adjoint type.

Following the notation in the proof of \cite[Prop.~3.7]{Ru20b}, we let $\{ 1,2,\ldots,t\}=\Pi_1\cup  \cdots \cup\Pi_r$ be a partition with $|\Pi_i|=n_i$, such that $$L_0 =[\bL,\bL]^F\cong \bH_{x_1}^{F'_1}\ti \cdots\ti \bH_{x_r}^{F'_r},$$
where $F'_i=F^{n_i}$ and
 $x_i$ is a fixed element in $\Pi_i$
for every $1\le i\le r$.
Note that for $1\le i\le r$, the inclusion $\bH_{x_i}\hookrightarrow \prod_{j\in \Pi_{x_i}}\bH_j$ induces the isomorphisms between the finite groups $\bH_{x_i}^{F_i'}$ and $( \prod_{j\in \Pi_{x_i}}\bH_j)^F$. See the proof of \cite[Prop.~3.7]{Ru20b} for details.
Also   $$[\bL^*,\bL^*]^F\cong {\bH^*_{x_1}}^{F_1'}\ti \cdots\ti {\bH^*_{x_r}}^{F_r'}.$$

Denote $H_{x_i}=\bH_{x_i}^{F'_i}$ for $1\le i\le r$.
Let $s_0\in [\bL^*,\bL^*]^F$ be a semisimple $\ell'$-element such that $c_0$ is in $s_0$.
So $s_0$ is strictly 
quasi-isolated.
Write $s_0=s_{0,1}\ti \cdots\ti s_{0,r}$ with $s_{0,i}\in  {\bH^*_{x_i}}^{F_i'}$ and then 
$e_{s_0}^{L_0}=e_{s_{0,1}}^{H_{x_1}}\otimes\cdots\otimes e_{s_{0,r}}^{H_{x_r}}.$
The semisimple element $x_{0,i}$ is strictly 
quasi-isolated in $\bH^*_{x_i}$.
Also, the block $c_0$ can be write as $c_0=c_{x_1}\otimes \cdots\otimes c_{x_r}$ where $c_{x_i}$ is a block of $H_{x_i}$ so that $c_{x_i}$ is  strictly 
quasi-isolated.

By possibly reordering the factors of $L_0$ we assume that there exists some positive integer $v\le r$ such that the factor $H_{x_i}$ is quasi-simple if $i\le v$, while $H_{x_i}$ is almost simple or solvable if $i>v$.
Let $L_0=L_{0,\rm{simp}}\ti L_{0,\rm{solv}}$,
where $L_{0,\rm{simp}}\cong\prod_{i\le v} H_{x_i}$
and $L_{0,\rm{solv}}\cong\prod_{i>v} H_{x_i}$.
By a theorem of Tits \cite[Thm.~24.17]{MT11},
$L_{0,\rm{solv}}$ is a direct product of some groups in the following list:
$\SL_2(2)$, $\SL_2(3)$, $\SU_3(2)$, $\Sp_4(2)\cong \fS_6$, $G_2(2)$, $^2B(2)$, $^2G(3)$, $^2F_4(2)$.
Note that the inductive BAW condition is in fact established for these groups in \cite{FLZ20b,Ma14,Sch16} and then there exist strong iBAW-bijections for all of these groups.
Together with Hypothesis \ref{Hypo-quasi-isolated} (b), for every $1\le i\le r$, there exists an iBAW-bijection $h_i:\IBr(c_{x_i})\to\Alp(c_{x_i})$.

We note that $\IBr(c_0)=\IBr(c_{x_1})\ti\cdots\ti \IBr(c_{x_r})$.
On the other hand, for every $c_0$-weight $(Q_0,\vhi_0)$ of $L_0$, we have a decomposition $Q_0=Q_{x_1}\ti\cdots\ti Q_{x_r}$ with $Q_{x_i}\le H_{x_i}$.
Then $$N_{L_0}(Q_0)=N_{H_{x_1}}(Q_{x_1})\ti\cdots\ti N_{H_{x_r}}(Q_{x_r})$$ and $\vhi_0=\vhi_{x_1}\ti\cdots\ti \vhi_{x_r}$, where $\vhi_{x_i}\in\dz(N_{H_{x_i}}(Q_{x_i})/Q_{x_i})$.
Thus $(Q_{x_i},\vhi_{x_i})$ is a $c_{x_i}$-weight of $H_{x_i}$.
We denote $$(Q_0,\vhi_0):=(Q_{x_1},\vhi_{x_1})\ti\cdots\ti (Q_{x_r},\vhi_{x_r}).$$
Define the map
$f_0:\IBr(c_0)\to\Alp(c_0)$ by 
$$\psi_{x_1}\ti\cdots\ti \psi_{x_r}\mapsto h_1(\psi_{x_1})\ti\cdots\ti h_r(\psi_{x_r}),$$
where $\psi_{x_i}\in\IBr(c_{x_i})$.
Then $f_0$ is a bijection and we will prove that this is an iBAW-bijection.

Let $\{ x_1\ldots,x_r\}=A_1\cup\cdots\cup A_u$ be the partition such that $x_j,x_k\in A_i$ if and only if $n_j=n_k$  and there exists a bijective morphism $\phi:\bH_{x_j}\to\bH_{x_k}$ commuting with the action of $F'_i$ such that $\phi(c_{x_j})=c_{x_k}$.
For each $i$ we fix a representative $x_{i_j}\in A_i$.
Denote $y_i=x_{i_j}$.
Then without loss of generality, we may assume that $$c_0=\bigotimes_{i=1}^u c_{y_i}^{\otimes |A_i|},$$
where the $c_{y_i}$ are distinct blocks.
Therefore,
$$\Aut(L_0)_{c_0}\cong \prod_{i=1}^u \Aut(H_{y_i})_{c_{y_i}}\wr \fS_{|A_i|},$$
where $\fS_{|A_i|}$ is the symmetric groups on $|A_i|$ elements.
Then it can be checked directly that $f_0$ is $\Aut(L_0)_{c_0}$-equivariant.

Let $\psi_0:=\psi_{x_1}\ti\cdots\ti\psi_{x_r}$.
For $1\le i\le u$, we define a partition  $A_i=E_{i,1}\cup\cdots\cup E_{i,w_i}$ such that for $x_k,x_l\in A_i$, we have
$x_k,x_l\in E_{i,j}$ if and only if $\psi_{x_k}=\psi_{x_l}$.
For each pair $(i,j)$ we fix a representative $z_{i,j}\in E_{ij}$.
Then
$$\Aut(L_0)_{\psi_0}\cong \prod_{i=1}^u \prod_{j=1}^{w_i} \Aut(H_{z_{i,j}})_{\psi_{z_{i,j}}}\wr \fS_{|E_{i,j}|}.$$
So the stabilizer of $\psi$ is a direct product of wreath products.
We note that the relation $\geqslant_b$ is compatible with direct products and wreath products; see Theorem 2.18, 2.21 and 4.6 of \cite{Sp18} for such results for $\geqslant_b$ of character triples and we note that character triples can be replaced by modular character triples in those theorems without additional requirements. 
From this, $f_0$ is an iBAW-bijection and this proves this assertion.
\end{proof}	

\subsection{Jordan decomposition for the inductive BAW condition}

Now we can prove our main theorem.

\begin{thm}\label{main-thm-quasi-isolated}
Let $\bG$ be a simple algebraic group of simply connected type with a Steinberg map $F:\bG\to\bG$  such that $\bG^F/Z(\bG^F)$ is simple and $\bG^F$ is its universal covering.
Let $\ell$ be a prime not dividing $q$.
If Hypothesis \ref{Hypo-quasi-isolated} holds for $(\bG,F)$ and $\ell$, then every  $\ell$-block of $\bG^F$ is BAW-good.
\end{thm}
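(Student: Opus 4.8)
The strategy is to split an arbitrary $\ell$-block $b$ of $\bG^F$ via the Brou\'e--Michel decomposition \cite{BM89} and to dispose of the pieces using Section~\ref{sec:red-quasi-isolated}. So $b$ lies in $e_s^{\bG^F}$ for a semisimple $\ell'$-element $s\in{\bG^*}^{F^*}$, and two cases occur. If $s$ is strictly quasi-isolated in $\bG^*$, then $b$ is itself a strictly quasi-isolated $\ell$-block of $\bG^F$; this is the base case of the reduction to quasi-isolated blocks, where BAW-goodness is the input (an iBAW-bijection together with Assumption~\ref{assumption:stabilizer-extend}, combined with the extendibility of Brauer characters of groups of Lie type \cite{Ge93} and the criterion \cite[Thm.~4.4]{Sp17}, Theorem~\ref{thm:criterion-block}). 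So assume $s$ is not strictly quasi-isolated. Then I put myself in the setting fixed at the opening of Section~\ref{sec:red-quasi-isolated}: $\bL^*$ is a minimal $F$-stable Levi subgroup of $\bG^*$ containing $\C^\circ_{\bG^*}(s)\C_{\bG^*}(s)^F$ --- hence a \emph{proper} Levi --- with $\bL$ in duality, $\cA=\langle F_0,\sigma\rangle$ the attached group of automorphisms, and $\tbL=\bL Z(\tbG)$. By Proposition~\ref{prop:iBAW-levi} it then suffices to verify hypotheses (i)--(iii) there for $L:=\bL^F$, $e_s^L$ and $\cA$; the rest of the proof is exactly this verification.

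For the verification I would pass to $L_0:=[\bL,\bL]^F$. As $\bL$ is a proper Levi, each simple factor of the semisimple simply connected group $[\bL,\bL]$ has Dynkin diagram isomorphic to a proper subgraph of that of $\bG$; so, except for the finitely many almost-simple or solvable factors on Tits's list (dealt with via \cite{FLZ20b,Ma14,Sch16} as in the proof of Proposition~\ref{prop:ibaw-bij-c0}), the relevant pairs lie in $\mathcal H_{\bG}$ and Hypothesis~\ref{Hypo-quasi-isolated} applies to each. Writing $s_0$ for the (strictly quasi-isolated) image of $s$ in $[\bL^*,\bL^*]^F$, Proposition~\ref{prop:ibaw-bij-c0} then provides an iBAW-bijection $\IBr(c_0)\to\Alp(c_0)$ for every strictly quasi-isolated block $c_0$ of $L_0$ and, for $\IBr(L_0,e_{s_0}^{L_0})$, the statement that every $\tbL^F$-orbit contains a character with the expected $\tbL^F\cA$-stabilizer extending to $L_0\cA_{\psi_0}$.

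Next I would carry this data up the tower $L_0\unlhd L=\bL^F\unlhd\tbL^F$, both quotients being abelian (from $\bL=[\bL,\bL]Z(\bL)^\circ$ and the regular embedding $\bL\hookrightarrow\tbL$). An iBAW-bijection furnishes precisely the blockwise equivariant bijection with the block-isomorphism-of-triples property that Theorem~\ref{cor:ext-wei-dgn}(ii) takes as input, so a first application with $(G,\tG)=(L_0,\bL^F)$ yields a blockwise $\tbL^F\cA$-equivariant bijection $f:\IBr(L,e_s^L)\to\Alp(L,e_s^L)$ retaining that property and the extendibility equivalence between a Brauer character and its weight image, and a second application with $(G,\tG)=(\bL^F,\tbL^F)$ yields the blockwise $\Lin_{\ell'}(\tbL^F/L)\rtimes\cA$-equivariant bijection $\tilde f:\IBr(\tbL^F\mid\IBr(L,e_s^L))\to\Alp(\tbL^F\mid\Alp(L,e_s^L))$ compatible with $f$ by construction. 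These are Proposition~\ref{prop:iBAW-levi}(ii), (iii). For (i)(a) one pushes Proposition~\ref{prop:ibaw-bij-c0}(ii) through the abelian quotient $L/L_0$ using Clifford theory and Lemma~\ref{lem:ext-char}, Geck's theorem \cite{Ge93} giving that every $\psi\in\IBr(L)$ extends to its stabilizer in $\tbL^F$; the extendibility equivalence carried by $f$ then turns this into (i)(b), namely that every weight character in $\Alp(L,e_s^L)$ extends to its stabilizer in $\N_{\tbL^F}(Q)$. With (i)--(iii) in hand, Proposition~\ref{prop:iBAW-levi} gives that $b$ is BAW-good, which completes the proof.

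The step I expect to be the main obstacle is this two-stage lifting $L_0\to\bL^F\to\tbL^F$: $f$ and $\tilde f$ must stay simultaneously blockwise and equivariant for the whole group $\tbL^F\cA$ while the covering of weights (Lemma~\ref{lem:cover-weight}), the Dade--Glauberman--Nagao correspondence and the block isomorphisms of modular character triples remain compatible all along the tower. Theorem~\ref{cor:ext-wei-dgn} and the Clifford theory of weights of \cite{BS20} are tailored to this, but verifying their hypotheses at each stage is delicate, and the case of $\bG$ of type $D_4$ --- where $\cA$ modulo inner automorphisms is a direct product of a cyclic group and $\fS_3$ --- has to be tracked separately following \cite{Ru20b}. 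A secondary point calling for care is the upgrade, for strictly quasi-isolated blocks of $\bG^F$, of an iBAW-bijection with Assumption~\ref{assumption:stabilizer-extend} to the quotient statement \eqref{iso:quotient}.
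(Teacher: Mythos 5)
Your proposal is correct and follows essentially the same route as the paper: reduce to the hypotheses of Proposition~\ref{prop:iBAW-levi}, feed in Proposition~\ref{prop:ibaw-bij-c0} for $L_0=[\bL,\bL]^F$, and lift along $L_0\unlhd L\unlhd\tL$ via Theorem~\ref{cor:ext-wei-dgn}(ii), Geck's extendibility theorem, Lemma~\ref{lem:ext-char} and the Clifford-theoretic argument for (i.a). The only cosmetic divergence is that the paper obtains both $f$ and $\tilde f$ by applying Theorem~\ref{cor:ext-wei-dgn}(ii) from the level of $L_0$ and then deduces their compatibility from the transitivity of covering (Lemma~\ref{lem:cover-weight}), rather than chaining the second application on top of the first; and the paper leaves the strictly quasi-isolated case of $\bG^F$ itself to the standing assumption of the main theorem, which you make explicit.
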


\begin{proof}
Let $b$ be a block of $G=\bG^F$ in a  semisimple $\ell'$-element $s$ of ${\bG^*}^F$.	
We first assume that~$s$ is not strictly quasi-isolated.
Keep the notation of Proposition~\ref{prop:iBAW-levi}.
Let $\bL^*$ be a minimal Levi subgroup of $\bG^*$ containing $\C_{\bG^*}^\circ(s)\C_{\bG^*}(s)^F$, we know that $s$ is a strictly 
quasi-isolated element of $\bL^F$.		
We verify the conditions of Proposition \ref{prop:iBAW-levi}.

Let $c$ be a block of $L$ in $s$ and
$c_0$ be a block of $L_0=[\bL,\bL]^F$ covered by $c$. 
Let $s_0$ be a semisimple $\ell'$-element of ${[\bL,\bL]^*}^F$ such that $c_0$ is in $s_0$.
Then by \cite[Lemma~3.6]{Ru20b} or \cite[Prop.~2.3]{Bo05}, $c_0$ is a strictly 
quasi-isolated block of $L_0$.
Note that $\tL\cA$ acts on $L_0$ and
the automorphisms induced by $\tL$ on $L_0$ are diagonal automorphisms.
By the hypothesis, Condition \ref{prop:iBAW-levi} (i.a) holds.

According to Proposition \ref{prop:ibaw-bij-c0}, there exists an iBAW-bijection $\IBr(c_0)\to\Alp(c_0)$.
Then there exists an $(\tL\cA)_{e_{s_0}^{L_0}}$-equivariant bijection $f_0:\IBr(L_0,e_{s_0}^{L_0})\to\Alp(L_0,e_{s_0}^{L_0})$
and by the Butterfly Theorem \ref{thm:butt-thm},
$$((\tL\cA)_{\psi_0},L_0,\psi_0)\geqslant_b (\N_{\tL\cA}(Q_0)_{\vhi_0},\N_{L_0}(Q_0),(\vhi_0)^0)$$
for every $\psi_0\in \IBr(L_0,e_{s_0}^{L_0})$ and $\overline{(Q_0,\vhi_0)}=f_0(\psi_0)$.
By \cite[Thm.~1.7.15]{GM20},
  $\psi_0$ extends to $\tL_{\psi_0}$ for every $\psi_0\in\IBr(L_0)$.
So for every weight $(Q_0,\vhi_0)$ of $\Alp^0(L_0,e_{s_0}^{L_0})$,
$\vhi_0$ extends to $\N_{\tL}(Q_0)_{\vhi_0}$. 

By Theorem \ref{cor:ext-wei-dgn} (ii), there exist a blockwise $\IBr(\tL/L_0)\rtimes \cA$-equivariant bijection $$\tilde f:\IBr(\tL\mid \IBr(L_0,e_{s_0}^{L_0}))\to \Alp(\tL\mid \Alp(L_0,e_{s_0}^{L_0}))$$
and a  blockwise $(\tL \cA)$-equivariant bijection $$f:\IBr(L\mid \IBr(L_0,e_{s_0}^{L_0}))\to\Alp(L\mid \Alp(L_0,e_{s_0}^{L_0})).$$
By the construction of $f$ and $\tilde f$ in the proof of Theorem \ref{cor:ext-wei-dgn}, Condition \ref{prop:iBAW-levi} (iv) holds and
$\tilde f(\IBr(\tL\mid\psi_0))=\Alp(\tL\mid f_0(\psi_0))$,
$f(\IBr(L\mid\psi_0))=\Alp(L\mid f_0(\psi_0))$ for any $\psi_0\in\IBr(L_0,e_{s_0}^{L_0})$.
Then by Lemma \ref{lem:cover-weight},
$\tilde f(\IBr(\tL\mid\psi))=\Alp(\tL\mid f(\psi))$ for any $\psi\in\IBr(L,e_s^L)$.

Therefore,  it suffices to verify (i.b) and (iii.a)  of Condition \ref{prop:iBAW-levi}.
Since  $\psi$ extends to $\tL_{\psi}$ for every $\psi\in\IBr(L)$  (using \cite[Thm.~1.7.15]{GM20}),
by Theorem \ref{cor:ext-wei-dgn} (ii), we know that $\vhi$ extends to $\N_{\tL}(Q)_{\vhi}$
for $\overline{(Q, \vhi)}=f(\psi)$.
So Condition \ref{prop:iBAW-levi} (i.b) holds.
Finally,  Condition \ref{prop:iBAW-levi} (iii.a) follows by Theorem \ref{cor:ext-wei-dgn} (ii).
Therefore, all conditions of Proposition \ref {prop:iBAW-levi} are satisfied and then $b$ is BAW-good.

Now assume that $s$ is strictly quasi-isolated.
Since Assumption \ref{assumption:stabilizer-extend}  holds for $(\bG,F)$, using Theorem~\ref{thm:criterion-block} we can obtain from the above arguments that $b$ is BAW-good if and only if there exists an iBAW-bijection for $b$.
\end{proof}	

Thus we have completed the proof of Theorem \ref{mian-thm}.
	
\begin{rmk}
	Let $S=\PSp_{2n}(q)$ (with odd $q$) be a simple group and $G=\Sp_{2n}(q)$.
In the proof of the main theorem of \cite{FM20}, the verification of the inductive BAW condition for $S$ at the prime~2 was reduced to the unipotent block of $\C_{G^*}(s)^*$. 
We note that 
\cite[Prop.~4.5]{FM20} can be deduced by Lemma \ref{lem:bi-Br-ch}, while
\cite[Prop.~5.6]{FM20} can be deduced by Theorem \ref{prop:mor-equ-quotient}.
In particular, the reduction to the unipotent 2-block of $\C_{G^*}(s)^*$ can be also deduced from Proposition \ref{prop:iBAW-levi}.

The reduction  to unipotent blocks there 
is slightly different from a direct application of Theorem \ref{main-thm-quasi-isolated} since in \cite{FM20} the groups $\SL_n(q)$ and $\SU_n(q)$ are not considered.
However, if we use \cite[Thm.~1]{FLZ21}, it suffices to consider the quasi-isolated 2-blocks of $\Sp_{2n}(q)$, which is the (unique) unipotent 2-block. 
\end{rmk}

\begin{rmk}\label{rmk:checked-case-uni}
There is an important class of quasi-isolated blocks: the unipotent blocks, which are better understood by the work of Cabanes--Enguehard \cite{CE94} and Enguehard \cite{En00}.
It seems easier to check the inductive BAW condition for  unipotent blocks.
Besides groups of types $\mathsf G_2$ and ${}^3\mathsf D_4$ (cf. \cite{Sch16}), and the Suzuki and Ree groups (cf. \cite{Ma14}), so far the inductive BAW condition has been checked for various cases on unipotent blocks:
Let $(G,q,\ell)$ be as follows with $\ell\nmid q$, then every unipotent $\ell$-block of $G$ is BAW-good for:
\begin{itemize}
	\item $G=\SL_n(q)$, $\SU_n(q)$ (cf. \cite{FLZ20b}), or
 $\Sp_{2n}(q)$ (cf. \cite{FLZ19,FM20}),
	\item $G=\Spin_{2n+1}(q)$ (with $n\ge 2$), $\Spin_{2n}^-(q)$ (with $n\ge 4$) or $\Spin_{2n}^+(q)$ (with $n\ge 5$) and both $q$ and $\ell$ are odd (cf. \cite{FLZ19}),
	\item $G=\mathsf E_6^\pm (q)$ (with $2,3\nmid q$) and $\ell\ge 5$ (cf. \cite{DLZ21}).
\end{itemize}

We mention that for the case $G=\Sp_{2n}(q)$ and odd $\ell$, the statement in \cite{FLZ19} assumes that $q$ is odd.
For even $q$, the proof for odd $q$ in \cite{FLZ19} also applies, using the weights given in \cite[\S6]{Li21}.
For the case $G=\Spin_{2n}^+(q)$ ($n\ge 5$), the assumption that $\ell$ is linear in \cite{FLZ19} is to ensure a unitriangular basic set, which is known by \cite{BDT20} now.
\end{rmk}



\end{document}